\renewcommand\@mkboth[2]{\markboth{#1}{}}
\theoremstyle{definition}
\newtheorem{thm}{Theorem}[section]
\newtheorem{lem}[thm]{Lemma}
\newtheorem{prop}[thm]{Proposition}
\newtheorem{defn}[thm]{Definition}
\newtheorem{question}[thm]{Question}
\newtheorem*{claim*}{Claim}
\newtheorem{cor}[thm]{Corollary}
\DeclareMathOperator{\dom}{dom}
\DeclareMathOperator{\lh}{lh}
\DeclareMathOperator{\ZFC}{ZFC}
\DeclareMathOperator{\AD}{AD}
\DeclareMathOperator{\Clb}{C(\vec{\lambda})}
\newcommand{\FF}{\mathcal{F}}
\newcommand{\NN}{\mathcal{N}}
\newcommand{\OO}{\mathcal{O}}
\newcommand{\UU}{\mathcal{U}}
\newcommand{\VV}{\mathcal{V}}
\newcommand{\WW}{\mathcal{W}}
\newcommand{\HOD}{HOD}
\renewcommand{\P}{{\mathbb{P}_{\vec{\mathcal{U}}}}}
\newcommand{\lb}{\lambda}
\renewcommand{\o}{\omega}
\newcommand{\R}{\mathbb{R}} 
\newcommand{\rest}{\!\restriction\!}
\newcommand{\Set}[2]{\{{#1}~\vert~{#2}\}}
\newcommand{\map}[3]{{#1}:{#2}\longrightarrow{#3}}
\newcommand{\Map}[5]{{#1}:{#2}\longrightarrow{#3};~{#4}\longmapsto{#5}}
\newcommand{\anf}[1]{{\text{``}\hspace{0.3ex}{#1}\hspace{0.3ex}\text{''}}}
\newcommand{\seq}[2]{\langle{#1}~\vert~{#2}\rangle}
\newcommand{\ran}[1]{{{\rm{ran}}(#1)}}
\newcommand{\crit}[1]{{{\rm{crit}}({#1})}}
\newcommand{\cof}[1]{{{\rm{cof}}({#1})}}
\newcommand{\Ord}{{\rm{Ord}}}
\newcommand{\Lim}{{\rm{Lim}}}
\newcommand{\POT}[1]{{\mathcal{P}}({#1})}
\newcommand{\HHrm}[1]{{H}_{{#1}}}
\newcommand{\goedel}[2]{{\prec}{#1},{#2}{\succ}}
\newcommand{\pre}[2]{{}^{#1}{#2}}
\newenvironment{enumerate-(a)}{\begin{enumerate}[label={\upshape (\alph*)}, leftmargin=2pc]}{\end{enumerate}}
\newenvironment{enumerate-(a)-r}{\begin{enumerate}[label={\upshape (\alph*)}, leftmargin=2pc,resume]}{\end{enumerate}}
\newenvironment{enumerate-(a)-5}{\begin{enumerate}[label={\upshape (\alph*)}, leftmargin=2pc,start=5]}{\end{enumerate}}
\newenvironment{enumerate-(A)}{\begin{enumerate}[label={\upshape (\Alph*)}, leftmargin=2pc]}{\end{enumerate}}
\newenvironment{enumerate-(A)-r}{\begin{enumerate}[label={\upshape (\Alph*)}, leftmargin=2pc,resume]}{\end{enumerate}}
\newenvironment{enumerate-(i)}{\begin{enumerate}[label={\upshape (\roman*)}, leftmargin=2pc]}{\end{enumerate}}
\newenvironment{enumerate-(i)-r}{\begin{enumerate}[label={\upshape (\roman*)}, leftmargin=2pc,resume]}{\end{enumerate}}
\newenvironment{enumerate-(I)}{\begin{enumerate}[label={\upshape (\Roman*)}, leftmargin=2pc]}{\end{enumerate}}
\newenvironment{enumerate-(I)-r}{\begin{enumerate}[label={\upshape (\Roman*)}, leftmargin=2pc,resume]}{\end{enumerate}}
\newenvironment{enumerate-(1)}{\begin{enumerate}[label={\upshape (\arabic*)}, leftmargin=2pc]}{\end{enumerate}}
\newenvironment{enumerate-(1)-r}{\begin{enumerate}[label={\upshape (\arabic*)}, leftmargin=2pc,resume]}{\end{enumerate}}
\newenvironment{enumerate-(star)}{\begin{enumerate}[label={\upshape{(\( \star_{ \arabic*} \))}}, leftmargin=2pc]}{\end{enumerate}}
\title{Descriptive properties of I2-embeddings}
\author{Vincenzo Dimonte}
\address{Dipartimento di Scienze Matematiche, Informatiche e Fisiche, Universit\`{a} degli Studi di Udine, via delle Scienze, 206, 33100 Udine, Italy}
\email{vincenzo.dimonte@uniud.it}
\author{Martina Iannella}
\address{Institute of Discrete Mathematics and Geometry, Vienna University of Technology, 1040 Vienna, Austria}
\email{martina.iannella@tuwien.ac.at}
\author{Philipp L\"{u}cke}
\address{Fachbereich Mathematik, Universit\"at Hamburg, Bundesstra{\ss}e 55, Hamburg, 20146, Germany}
\email{philipp.luecke@uni-hamburg.de}
\subjclass[2020]{(Primary) 03E55;  (Secondary) 03E35, 03E45, 03E47.}
\keywords{I2-embeddings, definability, perfect subsets, Baire property, absoluteness.}
\begin{document}

\begin{abstract}
 We contribute to the study of generalizations of the Perfect Set Property and the Baire Property to subsets of spaces of higher cardinalities, 
 like the power set $\POT{\lambda}$ of a singular cardinal $\lambda$ of countable cofinality or products $\prod_{i<\omega}\lambda_i$ for a strictly increasing sequence $\seq{\lambda_i}{i<\omega}$ of cardinals. 
  We consider the question under which large cardinal hypotheses  classes of definable subsets of these spaces possess  such regularity properties, focusing on rank-into-rank axioms and classes of sets definable by $\Sigma_1$-formulas with parameters from  various collections of sets. 
  We prove that $\omega$-many measurable cardinals, while sufficient to prove the perfect set property of all $\Sigma_1$-definable sets with parameters in $V_\lambda\cup\{V_\lambda\}$, are not enough to prove it if there is a cofinal sequence in $\lambda$ in the parameters.  
   For this conclusion, the existence of an I2-embedding is enough, but there are parameters in $V_{\lambda+1}$ for which I2 is still not enough. The situation is similar for the Baire property: under I2 all sets that are $\Sigma_1$-definable using elements of $V_\lambda$ and a cofinal sequence as parameters have the Baire property, but I2 is not enough for some parameter in $V_{\lambda+1}$. Finally, the existence of an I0-embedding implies that all sets that are $\Sigma^1_n$-definable  with parameters in $V_{\lambda+1}$ have the Baire property.
\end{abstract}

\maketitle



\section{Introduction}

 Fundamental results of descriptive set theory show that simply definable sets of real numbers, {e.g.} Borel sets, possess a rich and canonical structure theory and these structural results have various  applications in other areas of mathematics. Moreover, seminal results show that canonical extensions of the axioms of $\ZFC$ allow us to extend  these structural conclusions  to much larger classes of definable sets of reals. 
 Since the developed theory is limited to the study of mathematical objects that can be encoded as definable sets of real numbers, there has been a recent interest to develop a \emph{generalized descriptive set theory} that allows the study of definable objects of much higher cardinalities. 
 While it is already known that several key results of the classical theory cannot be directly generalized to all higher cardinalities (see, for example, \cite{LS15}), the research conducted so far in this area isolated several settings in which rich structure theories for definable sets of higher cardinalities can be developed. 
 The work presented in this paper contributes to the study of one of these settings that originates in Hugh Woodin`s work on large cardinal assumptions close to the \emph{Kunen Inconsistency} (see \cite{Woo11}).

 Remember that a non-trivial elementary embedding $\map{j}{L(V_{\lambda+1})}{L(V_{\lambda+1})}$ for some ordinal $\lambda$ is an \emph{I0-embedding} if $\crit{j}<\lambda$ holds. 
 Kunen's analysis of elementary embeddings in \cite{Ku71} then directly shows that $\lambda=\sup_{n<\omega}\lambda_n$ holds for every I0-embedding $\map{j}{L(V_{\lambda+1})}{L(V_{\lambda+1})}$ with critical sequence\footnote{We say that a sequence $\seq{\lambda_n}{n<\omega}$ of ordinals is the \emph{critical sequence} of a non-trivial elementary embedding $\map{j}{M}{N}$ between transitive classes if $\lambda_0=\crit{j}$ and $j(\lambda_n)=\lambda_{n+1}$ holds for all $n<\omega$.} $\seq{\lambda_n}{n<\omega}$. 
 Embeddings of this type produce a setting in which descriptive concepts can be developed fruitfully. More specifically,  several deep results show that the structural properties of the collection of subsets of $\POT{\lambda}$ contained in $L(V_{\lambda+1})$ strongly resembles the behavior of the collection of sets of reals in $L(\R)$  in the presence of the \emph{Axiom of Determinacy  $\AD$} in $L(\R)$. 
 In the following, we will focus on generalizations of the \emph{Perfect Set Property} to definable subsets of higher power sets. 
 Given a non-empty set $X$ and an infinite cardinal $\mu$, we equip the set ${}^\mu X$ of all functions from $\mu$ to $X$ with the topology whose basic open sets consists of all functions that extend a given function $\map{s}{\xi}{X}$ with $\xi<\mu$. 
In addition, we equip the  set $\POT{\nu}$ of all subsets of an infinite cardinal $\nu$ with the topology whose basic open sets consist of all subsets of $\nu$ whose intersection with a given ordinal $\eta<\nu$ is equal to a fixed subset of $\eta$. 
Finally, we say that a  map $\map{\iota}{X}{Y}$ between topological spaces is a \emph{perfect embedding} if it induces a homeomorphism between $X$ and the subspace $\ran{\iota}$ of $Y$.

 It is easy to see that  for every infinite cardinal $\lambda$, there is a subset of $\POT{\lambda}$ of cardinality greater than $\lambda$ that does not contain the range of a perfect embedding of ${}^{\cof{\lambda}}\lambda$ into $\POT{\lambda}$.\footnote{First, observe that for every $\gamma<\lambda$, the set $\POT{\gamma}$ is discrete in $\POT{\lambda}$ and therefore it does not contain the range of a perfect embedding of ${}^{\cof{\lambda}}\lambda$ into $\POT{\lambda}$. In particular, if $2^{{<}\lambda}>\lambda$, then there is a subset of $\POT{\lambda}$ with the desired property. In the other case, if $2^{{<}\lambda}=\lambda$, then the set of perfect embeddings of ${}^{\cof{\lambda}}\lambda$ into $\POT{\lambda}$ has cardinality $2^\lambda$ and we can build the desired subset through a standard recursive construction.} 
 In contrast, classical results show that if  $\AD$ holds in $L(\R)$, then every uncountable subset of $\POT{\omega}$ in $L(\R)$ contains the range of a perfect embedding of ${}^\omega\omega$ into $\POT{\omega}$. 
 The work of Hugh Woodin, Xianghui Shi and Scott Cramer now shows that I0-embeddings imply an analogous dichotomy at the supremum of the corresponding critical sequence (see {\cite[Section 5]{Cra15}}, {\cite[Section 4]{Sh15}} and {\cite[Section 7]{Woo11}}).

 \begin{thm}[{\cite{Cra15}}]\label{thm:PerfectI0}
  If $\map{j}{L(V_{\lambda+1})}{L(V_{\lambda+1})}$ is an I0-embedding and $X$ is a subset of $\POT{\lambda}$ of cardinality greater than $\lambda$ that is an element of $L(V_{\lambda+1})$, then there is a perfect embedding $\map{\iota}{{}^\omega\lambda}{\POT{\lambda}}$ with $\ran{\iota}\subseteq X$. 
 \end{thm}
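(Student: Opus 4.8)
The plan is to combine a definability analysis of $X$ with a fusion argument driven by the I0-embedding $j$, following the \emph{inverse limit reflection} method of Cramer \cite{Cra15} (closely related to the tree representation arguments of Woodin \cite{Woo11} and Shi \cite{Sh15}). Since $X$ is an element of $L(V_{\lambda+1})$ and every element of this model is definable over it from an ordinal and an element of $V_{\lambda+1}$, I would first fix an ordinal $\delta$, a parameter $a\in V_{\lambda+1}$ and a $\Sigma_1$-formula $\varphi$ with $X=\Set{y\in\POT{\lambda}}{L(V_{\lambda+1})\models\varphi(y,a,\delta)}$. Since the critical sequence of $j$ consists of measurable — in particular inaccessible — cardinals with supremum $\lambda$, the cardinal $\lambda$ is a strong limit of countable cofinality, so $\bigcup_{\gamma<\lambda}\POT{\gamma}$ has cardinality $\lambda$; removing this set from $X$ yields a set of cardinality greater than $\lambda$ that is still $\Sigma_1$-definable over $L(V_{\lambda+1})$ from $a$ and $\delta$, so I may assume that every element of $X$ is an unbounded subset of $\lambda$. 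The goal is then to build a system $\seq{p_t}{t\in{}^{<\omega}\lambda}$ of conditions, where $p_t$ is a subset of an ordinal $\eta_t<\lambda$, with $p_t\subseteq p_{t'}$ and $\eta_t<\eta_{t'}$ for $t\subsetneq t'$, with the conditions $p_{t^\frown\langle\alpha\rangle}$ ($\alpha<\lambda$) pairwise incompatible, with $\sup_{n<\omega}\eta_{x\rest n}=\lambda$ for every $x\in{}^\omega\lambda$, and with $\bigcup_{n<\omega}p_{x\rest n}\in X$ for every such $x$; the induced map $x\mapsto\bigcup_{n<\omega}p_{x\rest n}$ is then a perfect embedding of ${}^\omega\lambda$ into $\POT{\lambda}$ with range contained in $X$. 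Checking that this map is a perfect embedding is routine, once one allows the ordinals $\eta_{t^\frown\langle\alpha\rangle}$ to depend on $\alpha$ and range cofinally in $\lambda$ (since $\lambda$ is a strong limit, no condition has $\lambda$ many pairwise incompatible extensions of bounded length). The essential point is that the cardinality assumption alone does \emph{not} produce such a system — a tree of height $\lambda$ that is ${<}\lambda$-splitting at every node may still have more than $\lambda$ cofinal branches — so the large cardinal hypothesis must be used to equip $X$ with additional structure.

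That structure is supplied by $j$: out of $j$, the $\lambda$-embeddings derived from it and the inverse limits of towers of such embeddings, one extracts for the pair $(X,a)$ a family of \emph{$\lambda$-approximations} and an inverse limit reflection principle stating, roughly, that a $\Sigma_1$-assertion over $L(V_{\lambda+1})$ with parameters from $\{a\}\cup\POT{\lambda}$ holds in $L(V_{\lambda+1})$ if and only if it holds in club many of these approximations, and that any one such approximation extends to an elementary embedding of $L(V_{\lambda+1})$ reflecting the assertion back up. In particular, the statements that $|X|>\lambda$ and, more to the point, that $\Set{y\in X}{y\cap\eta=p}$ has cardinality greater than $\bar\lambda$ — where $p$ is a condition and $\bar\lambda$ is the image of $\lambda$ under the relevant approximation — are reflected faithfully.

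With this in place, I would construct the conditions $p_t$ simultaneously with a coherent system $\seq{E_t}{t\in{}^{<\omega}\lambda}$ of $\lambda$-approximations such that $E_t$ witnesses, in the reflected sense, that $\Set{y\in X}{y\cap\eta_t=p_t}$ is large. At a splitting node one passes to the structure named by $E_t$, where this set is concretely given and has reflected cardinality above $\bar\lambda$; inside that structure one finds $\lambda$ many pairwise incompatible extensions $p_{t^\frown\langle\alpha\rangle}$ of $p_t$, each still witnessed to be large, together with coherent refinements $E_{t^\frown\langle\alpha\rangle}$ of $E_t$ — this is the step in which the combinatorics of the approximations does the work that the failed counting argument cannot. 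For a branch $x\in{}^\omega\lambda$, the $E_{x\rest n}$ cohere to an inverse limit, and the elementary embedding of $L(V_{\lambda+1})$ associated with it maps $y:=\bigcup_{n<\omega}p_{x\rest n}$ into the reflected copy of $X$; applying inverse limit reflection downward then gives $L(V_{\lambda+1})\models\varphi(y,a,\delta)$, that is, $y\in X$.

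The hard part is the construction and control of the approximations and their inverse limits: proving that the inverse limits converge to genuine elementary embeddings of $L(V_{\lambda+1})$, and that these embeddings reflect the $\Sigma_1$-definition of $X$ faithfully, so that both the largeness invoked at the splitting nodes and the membership statements along the branches transfer correctly. This is exactly the content of Cramer's inverse limit reflection machinery, and it is here that one needs a genuine I0-embedding of $L(V_{\lambda+1})$ rather than merely an I1-embedding of $V_{\lambda+1}$ or one of the weaker hypotheses studied later in this paper: the formula $\varphi$ defining $X$ may quantify over all of $L(V_{\lambda+1})$, so the reflecting embeddings have to act on $L(V_{\lambda+1})$ itself. (Alternatively, the same content can be packaged as a Davis-style perfect set game of length $\omega$ adapted to $\POT{\lambda}$, in which a winning strategy for the first player encodes such a system of conditions and a winning strategy for the second player yields $|X|\le\lambda$; the required instance of determinacy is then obtained from the I0-hypothesis by a reflection argument of the same kind.)
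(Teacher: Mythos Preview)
The paper does not give its own proof of this theorem: it is stated with a citation to \cite{Cra15} (with related references to \cite{Woo11} and \cite{Sh15}) and used as a black box throughout. There is therefore nothing in the paper to compare your argument against beyond the attribution, and your outline is faithful to that attribution --- you correctly identify Cramer's inverse limit reflection machinery (and the closely related tree/game formulations of Woodin and Shi) as the engine behind the result, and your description of the fusion construction and of where the I0-hypothesis is genuinely needed matches the cited sources.
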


 The work presented in this paper is  motivated by the question whether the restriction of this implication to smaller classes of definable sets can be derived from weaker large cardinal assumptions. 
 It is motivated by the results of Sandra M\"uller and the third author in \cite{LM21} that analyze simply definable sets at limits of measurable cardinals. 
 In the following, we say a class $C$ is \emph{definable by a formula $\varphi(v_0,\ldots,v_n)$ and parameters $z_0,\ldots,z_{n-1}$} if $C=\Set{y}{\varphi(y,z_0,\ldots,z_{n-1})}$ holds.
 We now distinguish classes of definable sets using the \emph{Levy hierarchy of formulas}\footnote{See {\cite[p. 5]{MR1994835}}.} and the rank of parameters. 
 The following result is the starting point of our work:

 \begin{thm}[\cite{LM21}]\label{thm:LM}
  If $\lambda$ is a limit of measurable cardinals and $X$ is a subset of $\POT{\lambda}$ of cardinality greater than $\lambda$ that is definable by a $\Sigma_1$-formula with parameters in $V_\lambda\cup\{\lambda\}$, then there is a perfect embedding $\map{\iota}{{}^{\cof{\lambda}}\lambda}{\POT{\lambda}}$ with $\ran{\iota}\subseteq X$. 
 \end{thm}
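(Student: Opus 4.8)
The approach I would take is a generalized Mansfield--Solovay argument: use the measurable cardinals below $\lambda$ to manufacture a tree representation of $X$ that is simultaneously of size $\lambda$ and homogeneous enough that ``thin'' behaviour becomes impossible. Set $\kappa=\cof\lambda$ and fix a strictly increasing sequence $\seq{\lambda_i}{i<\kappa}$ of measurable cardinals cofinal in $\lambda$, together with normal ultrafilters $U_i$ on $\lambda_i$; shrinking the sequence, we may assume that the parameter $p$ in the $\Sigma_1$-definition of $X$ lies in $V_{\lambda_0}$. We work with the standing assumption $2^{{<}\lambda}=\lambda$ --- this is needed, since if $2^\gamma>\lambda$ for some $\gamma<\lambda$ then $\POT\gamma$ is a $\Sigma_1$-definable subset of $\POT{\lambda}$ of size greater than $\lambda$ without the desired property, exactly as in the footnote above --- and recall that for $\kappa=\omega$ this is equivalent to $\lambda$ being a strong limit. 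Under it, fix a perfect embedding $\map{e}{{}^{\kappa}\lambda}{\POT{\lambda}}$, realised by a Lusin scheme whose ``splitting'' occurs along a $\kappa$-indexed cofinal sequence of scales; constructing the required $\iota$ then amounts to refining this scheme so that the resulting branches also lie in $X$.

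The naive attempt at such a refinement --- prune the tree of conditions that are extendable to points of $X$ down to its everywhere-``$X$-large'' part --- does not work: since $\lambda$ is a strong limit, a condition whose extensions meet $X$ in more than $\lambda$ points need only have a single extension with that property, so pruning yields a branch rather than enough splitting. The measurable cardinals are therefore essential, and their role is precisely to upgrade ``$X$-largeness of a cone'' to ``the cone splits $\lambda$-fold''. Concretely, I would first establish a tree representation $X=p[T]$ with $T$ a tree on $\lambda\times\lambda$. Writing the definition of $X$ as $\exists w\,\psi(y,p,w)$ with $\psi$ bounded, L\"owenheim--Skolem together with the transitive collapse give that $y\in X$ if and only if there is a transitive $N$ with $\lambda+1\subseteq N$, $y,p\in N$, $|N|=\lambda$ and $N\models\psi(y,p,\bar w)$ for some $\bar w\in N$, and such an $N$ is coded by a subset of $\lambda$. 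The only obstruction to a representation on $\lambda\times\lambda$ is that ``the code is well-founded'' is $\Pi^1_1$ over $\POT{\lambda}$. Using that the $\lambda_i$ are measurable and cofinal in $\lambda$, one arranges the witnessing $N$ to be an increasing union of ultrapower hulls built from the $U_i$, so that iterability of the $U_i$ makes well-foundedness automatic along this ``$\vec U$-guided'' search. This yields $X=p[T]$ with $T$ a tree on $\lambda\times\lambda$ (hence with only $\lambda$ nodes), definable from $p$ and $\seq{U_i}{i<\kappa}$, and --- again by the $\vec U$-guidance --- carrying a homogeneity system; that is, $X$ is $\lambda$-homogeneously Suslin. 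I expect essentially all of the difficulty to sit here, both in setting up the $\vec U$-guided coding and in verifying that $T$ genuinely represents $X$ with the advertised homogeneity.

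Granting the homogeneous representation $X=p[T]$, the measures witness that $T$ is ``correct'': below any condition, the set of points of $X$ extending it equals, modulo a set of size at most $\lambda$, the set of branches through the corresponding subtree of $T$, and homogeneity forces a dichotomy at every condition --- its $X$-cone either has size at most $\lambda$ or splits $\lambda$-fold in the sense of the scheme $e$. Since $|X|>\lambda$, the root falls on the large side of this dichotomy, and we may then recursively choose, at the $\kappa$-indexed splitting levels, $\lambda$-many further conditions whose $X$-cones remain of size greater than $\lambda$, keeping the coherence requirements of the Lusin scheme and --- using homogeneity once more, so that the branch limits actually satisfy $\psi$ rather than merely approximate it --- arranging that every one of the $\kappa$-branches lands in $X$. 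Composing the resulting scheme with $e$ produces the perfect embedding $\map{\iota}{{}^{\cof{\lambda}}\lambda}{\POT{\lambda}}$ with $\ran{\iota}\subseteq X$, as required.

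The main obstacle, as indicated, is the second step: extracting from a $\Sigma_1$-definition with parameters in $V_\lambda\cup\{\lambda\}$ a tree representation of size $\lambda$ for $X$ and, crucially, equipping it with a homogeneity system from the measures --- without the latter, a $\lambda$-Suslin set of size greater than $\lambda$ can perfectly well be $\lambda$-thin, so cardinality alone is powerless, and it is exactly the $\omega$-many (more generally $\cof\lambda$-many) measurable cardinals that rule this out.
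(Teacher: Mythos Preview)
This theorem is not proved in the present paper; it is quoted from \cite{LM21} as motivation, so there is no proof here against which to compare your proposal.

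A word on the sketch nonetheless. The overall architecture---obtain a tree representation of $X$ of size $\lambda$ and run a Mansfield-style derivative argument---is the right shape, and the Mansfield half is essentially Lemma~\ref{lem:proj_Tree} of this paper. The gap is in your route to the tree. You propose to make $X$ $\lambda$-homogeneously Suslin by arranging the witnessing structures as unions of ``ultrapower hulls built from the $U_i$'' so that well-foundedness comes for free; you yourself flag that ``essentially all of the difficulty'' sits here, and indeed nothing in the outline explains why such a $\vec U$-guided search exists, why it captures all of $X$, or why it produces an honest homogeneity system rather than just a closed-under-ultrapower tree. Without that, your dichotomy step (``homogeneity forces either size $\leq\lambda$ or $\lambda$-fold splitting'') has no foundation.

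The argument in \cite{LM21} uses the measurables differently and avoids homogeneity altogether: one iterates the ultrapowers of the $U_i$ to produce an inner model $M$ with $V_\lambda\subseteq M$ and $(2^\lambda)^M<\lambda^+$, observes that the $\Sigma_1$-definition of $X$ with parameters in $V_\lambda\cup\{\lambda\}$ is absolute between $V$ and $M$, and then applies the Mansfield argument relative to $M$---if $X$ contained no perfect set, the derivative analysis would force $X\subseteq M$, contradicting $\vert X\vert>\lambda\geq(2^\lambda)^M$. (This is exactly the template the present paper follows for I2-embeddings in Theorem~\ref{thm:Main1PlusParameters}, with $M^j_\omega[\vec\lambda]$ in place of the iterated-ultrapower model.) So the measurables enter not as a source of homogeneity measures for a tree on $\lambda\times\lambda$, but as a device for collapsing the continuum of an inner model below $\lambda^+$ while keeping $V_\lambda$ and the defining parameters fixed. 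Incidentally, your side assumption $2^{{<}\lambda}=\lambda$ is automatic: a limit of measurable cardinals is a strong limit.
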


 Given an infinite cardinal $\lambda$, the \emph{$\Sigma_1$-Reflection Principle} shows that all $\Sigma_1$-formulas with parameters in $\HHrm{\lambda^+}$ are absolute between $V$ and $\HHrm{\lambda^+}$. Therefore, it follows that a subset of $\HHrm{\lambda^+}$ is definable by a $\Sigma_1$-formula with parameters in $\HHrm{\lambda^+}$ if and only if the given set is definable in this way in  $\HHrm{\lambda^+}$.
  This shows that,  if $\lambda$ is an infinite cardinal with $\HHrm{\lambda}=V_\lambda$, then $L(V_{\lambda+1})$ contains all subsets of $\POT{\lambda}$ that are definable by a $\Sigma_1$-formula with parameters in $\HHrm{\lambda^+}$, because $\HHrm{\lambda^+}$ is contained in $L(V_{\lambda+1})$.  
  In particular, it follows that  the conclusion of the implication stated in Theorem \ref{thm:PerfectI0} directly implies the conclusion of the implication stated in Theorem \ref{thm:LM}.

The theorems cited above directly raise the question if stronger perfect set theorems can be proven for limits of countably many measurable cardinals. In particular, it is natural to ask if the implication of Theorem \ref{thm:LM} still holds true if we allow more elements of $\POT{\lambda}$ in our $\Sigma_1$-definitions. 
 A natural candidate for such an additional parameter in $\POT{\lambda}\setminus(V_\lambda\cup\{\lambda\})$ is an $\omega$-sequence of measurable cardinals that is cofinal in the given supremum $\lambda$. 
 Our first result, proven in Section \ref{section:Negative},  shows that we no longer get a provable implication if we are allowed to use such a sequence as a parameter in our $\Sigma_1$-definitions:

 \begin{thm}\label{thm:NegativeLimitMeasurable}
	If $\vec{\lambda}$ is a strictly increasing sequence of measurable cardinals with limit $\lambda$, then the following statements hold in an inner model $M$ containing $\vec{\lambda}$:  
	\begin{enumerate-(i)}
	 \item The sequence $\vec{\lambda}$ consists of measurable cardinals. 
	 
	 \item If $\vec{\nu}$ is a strictly increasing $\omega$-sequence of  regular cardinals  with limit $\lambda$, then there is a subset of $\POT{\lambda}$ of cardinality greater than $\lambda$ that does not contain the range of a perfect embedding of ${}^\omega\lambda$ into $\POT{\lambda}$ and is definable  by a $\Sigma_1$-formula with parameters in $V_\lambda\cup\{\vec{\nu}\}$.  
	\end{enumerate-(i)} 
\end{thm}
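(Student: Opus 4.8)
I would realize $M$ as the canonical inner model $L[\vec{\mathcal{U}}]$ built from a sequence $\vec{\mathcal{U}} = \seq{\mathcal{U}_n}{n<\omega}$ of normal ultrafilters with $\mathcal{U}_n$ concentrating on $\lambda_n$. Since $\vec\lambda$ is definable over $M$ from $\vec{\mathcal{U}}$, it belongs to $M$. A standard argument shows that $\mathcal{U}_n \cap M$ is a normal ultrafilter on $\lambda_n$ in $M$, which gives~(i); moreover, by standard facts about canonical inner models, $\mathcal{U}_n \cap M$ is the \emph{unique} normal ultrafilter on $\lambda_n$ in $M$, the measurable cardinals of $M$ are exactly the $\lambda_n$, GCH holds in $M$, and $M$ enjoys the usual condensation properties. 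In particular $M \models 2^\lambda = \lambda^+$, so that in~(ii) ``cardinality greater than $\lambda$'' means ``cardinality $\lambda^+$''; and, since $2^{{<}\lambda} = \lambda$ in $M$ and every perfect embedding of ${}^\omega\lambda$ into $\POT\lambda$ is coded by a labelled tree on ${}^{{<}\omega}\lambda$, there are exactly $\lambda^+$ such embeddings in $M$.

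Now fix, in $M$, a strictly increasing $\omega$-sequence $\vec\nu = \seq{\nu_n}{n<\omega}$ of regular cardinals with supremum $\lambda$. Since $M$ carries a well-ordering $\lhd$ of $\POT\lambda$ of order type $\lambda^+$ that is $\Sigma_1$-definable in the parameter $\vec{\mathcal{U}} \cap M$, I would run the classical Bernstein recursion of length $\lambda^+$ along $\lhd$: listing the perfect embeddings as $\seq{\iota_\xi}{\xi<\lambda^+}$, at stage $\xi$ pick $A_\xi,B_\xi \in \ran{\iota_\xi}$ distinct from all sets chosen before, and put $X = \Set{A_\xi}{\xi<\lambda^+}$. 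Then $\lvert X\rvert = \lambda^+ > \lambda$, and $X$ contains no $\ran{\iota_\xi}$, since $\Set{B_\xi}{\xi<\lambda^+}$ is disjoint from $X$ and meets every $\ran{\iota_\xi}$. The remaining task is to give $X$ a $\Sigma_1$-definition over $M$ with parameters in $V_\lambda \cup \{\vec\nu\}$.

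For this I would use the standard reflection trick: $A \in X$ if and only if there is a transitive set $N$ of size at most $\lambda$, with $A \in N$, that correctly computes an initial segment of $\lhd$ long enough to carry the recursion through the stage at which $A$ is enumerated, and in which $A$ is chosen. The one delicate point is guaranteeing that $N$ genuinely computes a segment of the true $\lhd$, i.e.\ that the measure sequence of $N$ is an initial segment of $\vec{\mathcal{U}} \cap M$ rather than some spurious coherent sequence — condensation by itself does not settle this. This is exactly where the cofinal sequence $\vec\nu$ is used: because $\mathcal{U}_n \cap M$ is the unique normal ultrafilter on $\lambda_n$ in $M$, each restriction $(\vec{\mathcal{U}} \cap M) \rest \nu_n$ is \emph{locally} recognizable, its canonicity being witnessed by a single structure of size $<\nu_n$; the sequence $\vec\nu$ then supplies a cofinal ladder of cut points along which these bounded witnesses can be collected under a single existential quantifier. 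Concretely one requires that the measure sequence $\vec E$ of $N$ admit a sequence $\seq{P_n}{n<\omega}$ with each $P_n$ of size $<\nu_n$ witnessing that $\vec E \rest \nu_n$ is the canonical coherent sequence on the measurable cardinals below $\nu_n$, and that these restrictions cohere. I expect the heart of the argument — and its main obstacle — to be the fine-structural verification that this local‑recognizability scheme really does pin down $\vec{\mathcal{U}} \cap M$, so that the resulting $\Sigma_1$-formula in $\vec\nu$ (together, if convenient, with a parameter in $V_\lambda$) defines exactly $X$.

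This also explains the compatibility with Theorem~\ref{thm:LM}, and why the single parameter $\lambda$ does not suffice: reconstructing $\vec{\mathcal{U}} \cap M$ from $\lambda$ alone forces a uniqueness clause ranging over all potential length‑$\omega$ coherent sequences on the measurables below $\lambda$, which yields only a $\Delta_2$-definition of $X$; the cofinal parameter $\vec\nu$ is precisely what turns the reconstruction, and hence the definition of $X$, into a $\Sigma_1$ one.
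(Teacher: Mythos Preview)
Your approach diverges from the paper's at two points, and the second is a genuine gap.

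For the thin set itself, you run a Bernstein recursion along the canonical $\lambda^+$-well-ordering of $\POT{\lambda}^M$, whereas the paper uses a set of canonical codes for well-orderings of $\lambda$: fixing an enumeration $\vec{a}$ of $V_\lambda$, for each $\gamma\in[\lambda,\lambda^+)$ it takes the $\lhd$-least element $b_\gamma$ coding a well-order of type $\gamma$, and the resulting set $\Set{b_\gamma}{\lambda\leq\gamma<\lambda^+}$ has no perfect subset because a generalized Boundedness Lemma (Lemma~\ref{lemma:BoundednessLemma+}) forces any continuous image into the class of well-ordering codes to realize only boundedly many order types, so a perfect subset would produce two distinct $b_\gamma$'s of the same rank. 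Both constructions ultimately need the canonical well-ordering of $\HHrm{\lambda^+}^M$ to be $\Sigma_1$-accessible, so this difference is largely cosmetic --- though the well-ordering-code approach is better adapted to the definability argument, since membership in $\WW\OO_\lambda$ is already $\Delta_1$ in $\lambda$.

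The substantive gap is your $\Sigma_1$-recognition of the measure sequence. You propose to pin down $\vec{\mathcal U}\cap M$ locally via the uniqueness of normal measures in $L[\vec{\mathcal U}]$-type models, using $\vec\nu$ as a ladder of cut points --- and you yourself flag this as the ``main obstacle'' without resolving it. The difficulty is real: uniqueness and minimality assertions are naturally $\Pi_1$, not $\Sigma_1$, and turning them into $\Sigma_1$ conditions would require fine-structural comparison input that you do not supply. The paper sidesteps this entirely by a different device. Working inside $L[\mathcal U]$, it iterates the measures so as to obtain an elementary $j$ with $j(\lambda_n)=\nu_{x(n)}$ for some $x\in\HHrm{\aleph_1}$; by the standard analysis of iterated measurable ultrapowers, each image measure $j(U_n)$ is then generated by the tail filter of a club $C_n\subseteq\nu_{x(n)}$, that is, $j(U_n)=\Set{A}{\exists\xi<\nu_{x(n)}\ C_n\setminus\xi\subseteq A}$. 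This yields a \emph{positive} $\Sigma_1$ description, in the parameters $\vec\nu$ and $x$, of the class of pairs $\langle N,\vec F\rangle$ with $N$ transitive of size $\lambda$ and each $F_n$ the trace on $N$ of the tail filter of \emph{some} club in $\nu_{x(n)}$; an easy induction then shows any such $N$ equals $L_{N\cap\Ord}[j(\mathcal U)]$, so initial segments of the canonical well-ordering of the iterate are $\Sigma_1(\vec\nu,x)$-recognizable with no appeal to fine structure or uniqueness of measures.
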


 We now proceed by showing that a large cardinal axiom strictly weaker than the existence of an I0-embedding implies the perfect set property discussed above. 
 Remember that an elementary embedding $\map{j}{V}{M}$ with critical sequence $\seq{\lambda_n}{n<\omega}$ is an  \emph{I2-embedding} if $V_\lambda\subseteq M$, where $\lambda=\sup_{n<\omega}\lambda_n$. The existence of such an embedding is equivalent to the existence of  a non-trivial elementary embedding $\map{i}{V_\lambda}{V_\lambda}$ with critical sequence $\seq{\nu_n}{n<\omega}$ such that $\lambda=\sup_{n<\omega}\nu_n$ and the canonical map $$\Map{i_+}{V_{\lambda+1}}{V_{\lambda+1}}{A}{\bigcup\Set{i(A\cap V_{\lambda_n})}{n<\omega}}$$ extending $i$ to $V_{\lambda+1}$ maps well-founded relations on $V_\lambda$ to well-founded relations on $V_\lambda$ (see {\cite[Proposition 24.2]{MR1994835}}). 
 The results of \cite{La97} show that, if $\map{i}{L(V_{\nu+1})}{L(V_{\nu+1})}$ is an I0-embedding, then there is an embedding $\map{j}{V_\lambda}{V_\lambda}$ for some $\lambda<\nu$ with the given property. Since  $\nu$ is a limit of inaccessible cardinals in this setting, it follows that the existence of an I0-embedding has strictly higher consistency strength than the existence of an I2-embedding. 
 The next result, proven in Section \ref{section:Positive}, shows that I2-embeddings imply the desired perfect set property:

\begin{thm}\label{thm:Main1}
 Let $\map{j}{V}{M}$ be an I2-embedding with critical sequence $\vec{\lambda}=\seq{\lambda_n}{n<\omega}$ and set $\lambda=\sup_{n<\omega}{\lambda_n}$.  If $X$ is a subset of $\POT{\lambda}$ of cardinality greater than $\lambda$ that is definable by a $\Sigma_1$-formula with parameters in $V_\lambda\cup\{V_\lambda,\vec{\lambda}\}$, then there is a perfect embedding $\map{\iota}{{}^\omega\lambda}{\POT{\lambda}}$ with $\ran{\iota}\subseteq X$. 
\end{thm}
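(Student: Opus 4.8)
For the statement of Theorem~\ref{thm:Main1}, here is how I would proceed.

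\emph{Strategy and reductions.} The plan is to build the desired perfect embedding of ${}^\omega\lambda$ as a recursively constructed tree of bounded conditions, with the crucial $\lambda$-fold splitting produced from the I2-embedding. Since $\lambda$ is a singular limit of measurable cardinals, $2^{{<}\lambda}=\lambda$, so the bounded subsets of $\lambda$ form a set of size $\lambda$; hence the set $X^*$ of all $z\in X$ that are unbounded in $\lambda$ still has cardinality greater than $\lambda$, and any perfect embedding of ${}^\omega\lambda$ into $\POT\lambda$ with range contained in $X^*$ will do. Fix a $\Sigma_1$-formula $\varphi$ and $a\in V_\lambda$ with $X=\Set{y}{\varphi(y,a,V_\lambda,\vec\lambda)}$; since $\vec\lambda$ is the unique increasing enumeration of $\Lambda:=\ran{\vec\lambda}=\Set{\lambda_n}{n<\omega}$, after modifying $\varphi$ we may take the last parameter to be the set $\Lambda$. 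As $a$, $V_\lambda$, $\Lambda$ and all elements of $X^*$ lie in $\HHrm{\lambda^+}$, the $\Sigma_1$-Reflection Principle lets us decide membership in $X$ inside $\HHrm{\lambda^+}$, and coding witnessing sets together with rank functions certifying well-foundedness then yields, in $\ZFC$, a tree $S$ of height $\lambda$ whose first-coordinate projection equals $X^*$ and each of whose branches certifies that its first coordinate lies in $X^*$ — but with $|S|$ possibly exceeding $\lambda$, so that a Cantor--Bendixson analysis of $S$ does not by itself yield a perfect subset.

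\emph{The embeddings and the splitting lemma.} The point of the hypothesis $V_\lambda\subseteq M$ is that it supplies embeddings \emph{fixing} $V_\lambda$, which ordinary measurable ultrapowers do not. Recursively define $\map{i_m}{V_\lambda}{V_\lambda}$ by $i_0=j\rest V_\lambda$ and $i_{m+1}=j(i_m)$: using $j(\lambda)=\lambda$ and $V_\lambda\subseteq M$, each $i_m$ is a nontrivial elementary self-embedding of $V_\lambda$ with $\crit{i_m}=\lambda_m$, with $(i_m)_+(V_\lambda)=V_\lambda$, with $(i_m)_+(a)=a$ once $a\in V_{\lambda_m}$, and with $(i_m)_+(\Lambda)=\Lambda\setminus\{\lambda_m\}$; the analysis behind the stated equivalence provides, for suitable $m$, a full I2-embedding $\map{j_m}{V}{M_m}$ with $\crit{j_m}=\lambda_m$ and the same effect on $a$, $V_\lambda$ and $\Lambda$, and each $\lambda_m$ is a limit of measurable cardinals. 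Hence, given a condition $t$ with bounded domain below $\lambda_m$ (and $a\in V_{\lambda_m}$) and an element $z\supseteq t$ of the analogue $X^*_{\Lambda'}$ of $X^*$ with last parameter a cofinal set $\Lambda'$, together with a certifying branch of the corresponding tree $S_{\Lambda'}$, applying $j_m$ produces an element that still extends $t$, differs from $z$ above $\lambda_m$, lies in $X^*_{j_m(\Lambda')}$, and carries an initial segment of length $\lambda_m$ of a certifying branch of $S_{j_m(\Lambda')}$. Composing and iterating such moves for various $m$, and interleaving auxiliary measurable-ultrapower embeddings below $\lambda$ to widen the splitting from $\omega$-fold to $\lambda$-fold, is what yields a splitting lemma: above any condition carrying an element of some $X^*_{\Lambda'}$ with a certificate, there are $\lambda$ pairwise incompatible longer conditions, each carrying an element of some $X^*_{\Lambda''}$ for a cofinal $\Lambda''\subseteq\Lambda'$, together with a compatible certificate.

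\emph{The recursion.} Granting this, one runs a recursion of length $\omega$ along ${}^{<\omega}\lambda$ producing $\seq{\langle t_s,\Lambda_s,z_s,b_s\rangle}{s\in{}^{<\omega}\lambda}$ with: $\dom(t_s)<\dom(t_u)$ for $s\subsetneq u$ and $\sup_{n<\omega}\dom(t_{f\rest n})=\lambda$ for each branch $f\in{}^\omega\lambda$; $t_s\subseteq t_u$ for $s\subseteq u$ and $t_s,t_u$ incompatible for incomparable $s,u$; $\Lambda_s\subseteq\Lambda$ cofinal, obtained from its predecessor by removing finitely many points; $z_s\supseteq t_s$ in $X^*_{\Lambda_s}$; and $b_s$ a certifying branch-approximation in $S_{\Lambda_s}$, coherent along branches. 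Then $\iota(f):=\bigcup_{n<\omega}t_{f\rest n}\in\POT\lambda$ defines, by the incompatibility clause, a homeomorphism onto its image; and one arranges the removals so that along each branch $f$ the coherent certificates $b_{f\rest n}$ converge to a certificate witnessing $\iota(f)\in X$ itself, whence $\ran{\iota}\subseteq X$ and $\iota$ is as required.

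\emph{Main obstacle.} The hard part is exactly this last arrangement, together with the singularity of $\lambda$. A naive Cantor--Bendixson count on $S$ fails at singular $\lambda$ — as noted in the introduction, $\POT\lambda$ already has a subset of cardinality greater than $\lambda$ with no perfect subset — so the $\lambda$-fold splitting must really come from the embeddings and the measurables below $\lambda$. At the same time the defining formula of $X$ refers to the actual critical sequence $\Lambda$, which every relevant embedding moves; organising the removals so that the $\omega$-step recursion survives and the certificates return to $X$ along branches is precisely where one uses that $\Lambda$ is the critical sequence of an I2-embedding, rather than having merely $\omega$ measurable cardinals available. This is the distinction that Theorem~\ref{thm:NegativeLimitMeasurable} shows to be essential, and it also explains why the $V_\lambda$-fixing embeddings $i_m$ and $j_m$ must be used in place of ordinary measurable ultrapowers, which do not fix $V_\lambda$.
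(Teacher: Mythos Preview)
Your approach is genuinely different from the paper's, and as it stands it has a real gap at exactly the point you flag as the ``main obstacle''.

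The paper does not build the perfect tree by hand. Instead it translates the $\Sigma_1$-definition into a $\Sigma^1_2$-statement over $V_\lambda$ (Proposition~\ref{prop:translate}), obtains a higher Shoenfield-type tree representation $S_{T,\theta}$ for the resulting set (Lemmas~\ref{lemma:AbsoSigma11} and~\ref{lemma:Sigma12TreeRepresentations}), and then works with the inner model $N=M^j_\omega[\vec{\lambda}]$, where $(2^\lambda)^N<\lambda^+$. The key input from I2 is Laver's absoluteness (Lemma~\ref{lemma:I2Sigma12}): using that $j_{0,\omega}\circ r\in N$ for every ranking function $r$, one shows $p[S_{T,\theta}^V]=p[S_{T,\theta}^N]$. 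A Mansfield-style Cantor--Bendixson derivative on the tree $S_{T,\theta}^N$ (Lemma~\ref{lem:proj_Tree}) then gives the dichotomy: either a perfect subset exists, or every element of $p[S_{T,\theta}^N]$ lies in $N$ and hence $\vert X\vert\leq(2^\lambda)^N\leq\lambda$. Note that the parameter $\vec{\lambda}$ is never moved in this argument; it sits inside $N$ from the start.

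Your construction, by contrast, applies embeddings $j_m$ that genuinely change the parameter: $j_m(\Lambda)=\Lambda\setminus\{\lambda_m\}$. So your splitting lemma produces elements and certificates for the \emph{wrong} sets $X^*_{\Lambda'}$ with $\Lambda'\subsetneq\Lambda$, and along each branch these $\Lambda_s$ strictly decrease. You assert that ``one arranges the removals so that \ldots\ the certificates return to $X$ along branches'', but you give no mechanism for this, and it is not clear one exists: a $\Sigma_1$-certificate for $\varphi(y,a,V_\lambda,\Lambda')$ simply is not a certificate for $\varphi(y,a,V_\lambda,\Lambda)$, and after $\omega$ steps you may have removed all of $\Lambda$. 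The auxiliary ``measurable-ultrapower embeddings below $\lambda$'' you invoke to widen splitting also move $V_\lambda$ and hence the certificates, so they cannot be interleaved innocently. Without a concrete device to recover the original parameter---and this is precisely what the paper's route via $M^j_\omega[\vec{\lambda}]$ and tree absoluteness supplies---the argument does not close.
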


 The proof of this theorem will  show that its conclusion holds for subsets of $\POT{\lambda}$ that are definable from a significantly larger set of parameters in $V_{\lambda+1}$ (see Theorem \ref{thm:Main1PlusParameters} below). 
 However, in Section \ref{section:Negative}, we will observe that an assumption strictly stronger than the existence of an I2-embedding is necessary to obtain this perfect set property for all subsets of $\POT{\lambda}$ that are definable by $\Sigma_1$-formulas with parameters in $\POT{\lambda}$:

\begin{thm}\label{thm:NoI2boldface}
 If $\map{j}{V}{M}$ is an I2-embedding with critical sequence $\vec{\lambda}=\seq{\lambda_n}{n<\omega}$ and $\lambda=\sup_{n<\omega}\lambda_n$, then the following statements hold in an inner model:   
 \begin{enumerate-(i)}
  \item There is an I2-embedding whose critical sequence has supremum $\lambda$. 
  
  \item There is a subset $z$ of $\lambda$ and a subset $X$ of $\POT{\lambda}$ of cardinality greater than $\lambda$ such that $X$ does not contain the range of a perfect embedding of ${}^\omega\lambda$ into $\POT{\lambda}$  and the set $X$ is definable by a $\Sigma_1$-formula with parameter $z$. 
  \end{enumerate-(i)}
  \end{thm}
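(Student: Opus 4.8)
The plan is to establish both conclusions inside an inner model of the form $L[z]$ for a suitable subset $z$ of $\lambda$. Set $i=j\restriction V_\lambda\colon V_\lambda\to V_\lambda$; by the characterization of I2-embeddings recalled before Theorem~\ref{thm:Main1}, its critical sequence is $\vec\lambda$, with supremum $\lambda$, and its canonical extension $i_+\colon V_{\lambda+1}\to V_{\lambda+1}$ maps well-founded relations on $V_\lambda$ to well-founded relations on $V_\lambda$. Since $\lambda$ is a limit of measurable cardinals it is a strong limit, so $\vert V_\lambda\vert=\lambda$, and I would fix a subset $z$ of $\lambda$ coding a well-founded extensional relation on $\lambda$ whose transitive collapse is $V_\lambda$ together with the image of $i$ under this collapse. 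The crucial point of this choice is that then $V_\lambda\subseteq L[z]$, so $V_\lambda^{L[z]}=V_\lambda$, the embedding $i$ is an element of $L[z]$ definable there from $z$, the cardinal $\lambda$ stays a limit of inaccessible cardinals, hence a strong limit cardinal, in $L[z]$, and $L[z]$ satisfies the GCH at all cardinals $\geq\lambda$; in particular $\HHrm{\lambda^+}^{L[z]}=L_{\lambda^+}[z]$ and $(2^\lambda)^{L[z]}=\lambda^+$. I would then verify that $L[z]$ witnesses both statements.

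For part~(i), since $V_\lambda^{L[z]}=V_\lambda$ the map $i$ is still a non-trivial elementary self-embedding of $V_\lambda$ inside $L[z]$, with critical sequence $\vec\lambda$ of supremum $\lambda$, and the canonical extension of $i$ as computed in $L[z]$ is just the restriction of $i_+$ to $V_{\lambda+1}^{L[z]}$, because $L[z]$ evaluates the defining union $\bigcup_{n<\omega}i(A\cap V_{\lambda_n})$ correctly. As well-foundedness of a set-sized relation is absolute between $V$ and $L[z]$, the property that $i_+$ preserves well-foundedness transfers to $L[z]$, and hence, by the same characterization, $L[z]$ contains an I2-embedding whose critical sequence has supremum $\lambda$.

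For part~(ii), I would carry out a Bernstein-type construction inside $L[z]$. Each perfect embedding of ${}^\omega\lambda$ into $\POT\lambda$ is determined by $\lambda$-many basic open conditions, hence is coded by an element of $\HHrm{\lambda^+}^{L[z]}$ of size $\lambda$; since $(2^\lambda)^{L[z]}=\lambda^+$, there are exactly $\lambda^+$ such embeddings in $L[z]$, and by condensation they can be enumerated as $\seq{\iota_\xi}{\xi<\lambda^+}$ by a relation that is $\Delta_1$-definable over $\HHrm{\lambda^+}^{L[z]}$ from $z$, via the canonical $\Sigma_1$-definable well-ordering $<_{L[z]}$ of $L_{\lambda^+}[z]$. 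Working in $L[z]$, I would recursively choose, for each $\xi<\lambda^+$, the $<_{L[z]}$-least pair of distinct points $a_\xi,b_\xi\in\ran{\iota_\xi}$ not lying in $\Set{a_\eta,b_\eta}{\eta<\xi}$ --- this is possible because $\vert\ran{\iota_\xi}\vert=\lambda^+$ while fewer than $\lambda^+$ points have been used so far --- and set $X=\Set{a_\xi}{\xi<\lambda^+}$. Then $\vert X\vert=\lambda^+>\lambda$, while for every $\xi<\lambda^+$ the point $b_\xi$ witnesses $\ran{\iota_\xi}\not\subseteq X$, so $X$ contains the range of no perfect embedding of ${}^\omega\lambda$ into $\POT\lambda$.

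Finally, I would check that $X$ is $\Sigma_1$-definable from the parameter $z$, which is a subset of $\lambda$. A point belongs to $X$ if and only if there is an initial segment of the recursion above --- a set in $\HHrm{\lambda^+}^{L[z]}$, since each $\iota_\xi$ is there replaced by its size-$\lambda$ code --- whose last entry has that point as its first coordinate; as the recursion rule only invokes the $\Delta_1$-definable enumeration of codes of perfect embeddings and the well-ordering $<_{L[z]}$, this is a $\Sigma_1$-condition over $\HHrm{\lambda^+}^{L[z]}=L_{\lambda^+}[z]$ with parameter $z$, and hence, by the $\Sigma_1$-Reflection Principle, $X$ is $\Sigma_1$-definable from $z$ in $L[z]$. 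I expect this final step to be the main obstacle: to obtain a genuinely $\Sigma_1$ --- rather than merely $\Sigma_2$ --- definition, one must localize the whole construction inside $\HHrm{\lambda^+}^{L[z]}$, which relies both on perfect embeddings admitting codes of size $\lambda$ and on the condensation-based $\Sigma_1$-definability of the canonical well-ordering of $L[z]$.
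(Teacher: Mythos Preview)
Your treatment of part~(i) is the same as the paper's. For part~(ii) you take a genuinely different route: a Bernstein diagonalization against all perfect embeddings, whereas the paper builds the set out of codes for well-orderings of $\lambda$ and invokes a Boundedness Lemma.

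The gap is precisely where you suspected. You assert that the codes of perfect embeddings can be enumerated by a $\Delta_1$ relation over $\HHrm{\lambda^+}^{L[z]}$, but a size-$\lambda$ code $c$ for a continuous map $\iota_c\colon{}^\omega\lambda\to\POT{\lambda}$ gives a \emph{perfect embedding} only if $\iota_c$ is injective, and injectivity is the condition $\forall x,y\in{}^\omega\lambda\;(x\neq y\to\iota_c(x)\neq\iota_c(y))$, which is $\Pi^1_1$ over $V_\lambda$ and hence $\Pi_1$ over $\HHrm{\lambda^+}$, not $\Delta_1$. The same obstruction recurs inside the recursion: to certify that $(a_\xi,b_\xi)$ is the $<_{L[z]}$-\emph{least} admissible pair you must verify, for every smaller pair, that one coordinate lies outside $\ran{\iota_\xi}$, and ``$a\notin\ran{\iota_\xi}$'' is again only $\Pi_1$ when $\ran{\iota_\xi}$ need not be closed. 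So the recursion rule is $\Delta_2$ rather than $\Sigma_1$, and the appeal to $\Sigma_1$-Reflection does not go through as written. One can try to repair this by diagonalizing against trees $T\subseteq{}^{<\lambda}2$ instead (so that ``$a\in[T]$'' becomes $\Delta_0$), but then the clause ``no new pair exists in $[T_\xi]$'' still contributes an unbounded $\Pi_1$ alternative, and arranging a purely $\Sigma_1$ recursion requires real additional work.

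The paper avoids all of this. In $L[y]$ it fixes an enumeration $\vec a$ of $V_\lambda$ and shows the class $\mathsf{WO}(\vec{\lambda},\vec a)$ of codes for well-orderings of $\lambda$ is $\Delta_1$ in a single parameter $z$; it then takes, for each $\gamma\in[\lambda,\lambda^+)$, the $<_{L[y]}$-least code $b_\gamma$ of order type $\gamma$. Because the defining property is $\Delta_1$ and the initial segments of $<_{L[y]}$ are $\Sigma_1$, the map $\gamma\mapsto b_\gamma$ is $\Sigma_1$ outright---no transfinite recursion is needed. That the resulting set admits no perfect embedding follows from the Boundedness Lemma (Lemma~\ref{lemma:BoundednessLemma+}): any continuous image of ${}^\omega\lambda$ inside $\mathsf{WO}(\vec{\lambda},\vec a)$ has ranks bounded below $\lambda^+$, so it cannot hit all the $b_\gamma$. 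This also yields the sharper conclusion of Theorem~\ref{theorem:NegativeI2Clambda}, with the witnessing set already lying inside $C(\vec{\lambda})$.
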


 The five results discussed above suggest the intriguing possibility of studying large cardinal assumptions canonically inducing singular cardinals $\lambda$ of countable cofinality through the provable regularity properties of simply definable subsets of $\POT{\lambda}$. 
 More specifically, they suggest that for each large cardinal axiom of this form, we want to uniformly assign  as large subsets $P_\lambda$ of $V_{\lambda+1}$ as possible to each singular cardinal $\lambda$, in a way that ensures that $\ZFC$ proves that whenever $\lambda$ is a singular cardinal of  countable cofinality induced by a cardinal of the given type,  then all subsets of $\POT{\lambda}$ that are definable by $\Sigma_1$-formulas using parameters from $P_\lambda$ either have cardinality at most $\lambda$ or  contain the range of a perfect embedding of ${}^\omega\lambda$ into $\POT{\lambda}$. 
 Note that, since this approach is based on provable implications and not consistency strength, it is less affected by the current technical limitations of inner model theory and therefore provides a new angle to study strong large cardinal axioms.

 In addition to $\Sigma_1$-definable subsets of power sets, we will also study spaces and complexity classes that more closely resemble the objects studied in classical descriptive set theory. 
 More specifically, for a given strictly increasing sequence $\vec{\lambda}=\seq{\lambda_n}{n<\omega}$ of infinite cardinals with supremum $\lambda$, we will study subsets of the  closed subspace $C(\vec{\lambda})$ of ${}^\omega\lambda$ consisting of all functions in the set $\prod_{n<\omega}\lambda_n$, {i.e.,} all functions $\map{x}{\omega}{\lambda}$ satisfying $x(n)<\lambda_n$ for all $n<\omega$. 
Note that the map $$\Map{\iota_{\vec{\lambda}}}{C(\vec{\lambda})}{\POT{\lambda}}{x}{\Set{\goedel{\lambda_n}{x(n)}}{n<\omega}}$$ yields a homeomorphism between $C(\vec{\lambda})$ and a closed subset of $\POT{\lambda}$.\footnote{Here, we let $\map{\goedel{\cdot}{\cdot}}{\Ord\times\Ord}{\Ord}$ denote the \emph{G\"odel pairing function}.} 
Moreover, since the map $\iota_{\vec{\lambda}}$ is definable by a $\Delta_0$-formula with parameter $\vec{\lambda}$, Theorem \ref{thm:Main1} immediately implies a perfect set theorem for subsets of $C(\vec{\lambda})$ definable by $\Sigma_1$-formulas with parameters in the set $V_\lambda\cup\{\vec{\lambda}\}$. 
 Finally, the sets produced in the proofs of Theorems \ref{thm:NegativeLimitMeasurable} and  \ref{thm:NoI2boldface} will actually be subsets of $\ran{\iota_{\vec{\lambda}}}$ and therefore yield analogous negative results for $\Sigma_1$-definable subsets of $C(\vec{\lambda})$ (see Theorems \ref{theorem:NegativeI2Clambda} and \ref{thm:NegativeLimitMeasurableC-lambda} below). 

  The theorems above extends beyond $\POT{\lambda}$ and $C(\vec{\lambda})$: In \cite{DMR23} a whole classes of spaces is introduced: the $\lambda$-Polish spaces, {i.e.,} spaces that are completely metrizable and with weight $\lambda$, and it is easy to prove analogous results for them. For example, $\pre{\lambda}{2}$, with the bounded topology, is homeomorphic to $\POT{\lambda}$, and therefore Theorems \ref{thm:LM}, \ref{thm:NegativeLimitMeasurable}, \ref{thm:Main1} and \ref{thm:NoI2boldface} hold in there. The space $\pre{\omega}{\lambda}$, with the product topology, is homeomorphic to a closed subset of $\POT{\lambda}$ via the map $x\mapsto \goedel{n}{x(n)}$, and it contains $C(\vec{\lambda})$ as a closed set, therefore Theorems \ref{thm:NegativeLimitMeasurable}, \ref{thm:Main1} and \ref{thm:NoI2boldface} hold in there. If $(X,d)$ is any $\lambda$-Polish space, then there is a $\Sigma_1(d)$ continuous bijection between a closed set $F\subseteq\pre{\lambda}{\omega}$ and $X$ (\cite{DMR23}). By pulling back with the bijection, we can therefore prove Theorems \ref{thm:LM}, \ref{thm:NegativeLimitMeasurable}, \ref{thm:Main1} and \ref{thm:NoI2boldface} also in there. Finally, if $d$ is more complicated, the negative results of Theorems \ref{thm:NegativeLimitMeasurable} and \ref{thm:NoI2boldface} hold anyway, but respectively with a witness in $\Sigma_1(V_{\lambda}\cup\{V_\lambda,\vec{\lambda},d\})$ and in $\Sigma_1(z,d)$.
	
  In another direction, we will not only study subsets of $\POT{\lambda}$, ${}^\omega \lambda$ or $C(\vec{\lambda})$ that are definable in $V$ by formulas of a given complexity, but also sets that are definable over $V_\lambda$ by higher-order formulas in the classes $\Sigma^m_n$ and $\Pi^m_n$ (see, for example, {\cite[p. 295]{J03}}) using certain parameters contained in $V_{\lambda+1}$. The following results (whose proof is a routine adaptation of the proof of {\cite[ Lemma 25.25]{J03}} to higher cardinals of countable cofinality) connects this form of definability to $\Sigma_1$-definitions:

\begin{prop}\label{prop:translate}
  For every $\Sigma_1$-formula  $\varphi(v_0,\ldots,v_{k-1})$ in the language of set theory, there exists a $\Sigma^1_2$-formula $\psi(w_0,\ldots,w_{k-1})$ in the language of set theory with free second-order parameters $w_0,\ldots,w_{k-1}$ such that $\ZFC$ proves that $$\varphi(A_0,\ldots,A_{k-1}) ~ \Longleftrightarrow ~ \langle V_\lambda,\in\rangle\models\psi(A_0,\ldots,A_{k-1})$$ holds  for every singular cardinal $\lambda$ of countable cofinality with $\HHrm{\lambda}=V_\lambda$ and all $A_0,\ldots,A_{k-1}\in V_{\lambda+1}$.  
\end{prop}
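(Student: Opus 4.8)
The plan is to adapt the classical argument showing that $\Sigma_1$-definability over $\HHrm{\omega_1}$ coincides with $\Sigma^1_2$-definability over the reals, i.e.\ the proof behind \cite[Lemma 25.25]{J03}, replacing $\HHrm{\omega_1}$ by $\HHrm{\lambda^+}$, countable transitive models by transitive models of cardinality $\lambda$, and reals by subsets of $V_\lambda$.

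Fix a $\Sigma_1$-formula $\varphi(v_0,\ldots,v_{k-1})$, and throughout consider a singular cardinal $\lambda$ of countable cofinality with $\HHrm{\lambda}=V_\lambda$ together with parameters $A_0,\ldots,A_{k-1}\in V_{\lambda+1}$, abbreviated $\vec A$. Note that $\HHrm{\lambda}=V_\lambda$ yields $|V_\lambda|=\lambda$ and $V_{\lambda+1}\subseteq\HHrm{\lambda^+}$, so the $\Sigma_1$-Reflection Principle gives $\varphi(\vec A)\Leftrightarrow\HHrm{\lambda^+}\models\varphi(\vec A)$. The first step is to check that this is in turn equivalent to the existence of a transitive set $N$ with $V_\lambda\in N$, $A_0,\ldots,A_{k-1}\in N$, $|N|=\lambda$ and $N\models\varphi(\vec A)$. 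Indeed, if $\varphi(\vec A)$ holds, collapse an elementary submodel $M\prec\HHrm{\lambda^+}$ of cardinality $\lambda$ with $V_\lambda\cup\{V_\lambda,A_0,\ldots,A_{k-1}\}\subseteq M$, using that then $M\models\varphi(\vec A)$ and that the transitive collapse is the identity on $V_\lambda$, maps $V_\lambda$ to $V_\lambda$ and fixes each $A_i$; conversely, if such an $N$ exists, then $\varphi(\vec A)$ holds by the upward absoluteness of $\Sigma_1$-formulas for transitive sets.

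It then remains to express ``there is such an $N$'' by a $\Sigma^1_2$-formula $\psi(w_0,\ldots,w_{k-1})$ over $\langle V_\lambda,\in\rangle$, with the second-order parameters $w_i$ interpreted as the $A_i$. Using $|V_\lambda|=\lambda=|N|$, we code $N$ by a well-founded extensional relation $E$ on $V_\lambda$ obtained from a bijection $V_\lambda\to N$, together with the restriction $g$ of the transitive collapse map to the set of $E$-predecessors of the point $d\in\mathrm{field}(E)$ that codes $V_\lambda$. Since $V_\lambda\times V_\lambda\subseteq V_\lambda$, the objects $E$ and $g$ are subsets of $V_\lambda$, so $\psi$ begins with a second-order existential block $\exists E\,\exists g$; inside this block, with additional first-order witnesses $d$ and $a_0,\ldots,a_{k-1}$, it asserts first-order over $\langle V_\lambda,\in,E,g\rangle$ that $E$ is extensional, that $g$ is an $\in$-isomorphism from the set of $E$-predecessors of $d$ onto $\langle V_\lambda,\in\rangle$, that every $E$-predecessor of each $a_i$ is an $E$-predecessor of $d$ while $\forall x\,(x\mathrel E a_i\leftrightarrow g(x)\in w_i)$, and that $\langle\mathrm{field}(E),E\rangle\models\varphi(a_0,\ldots,a_{k-1})$. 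Collapsing $E$ then produces a transitive set $N$ in which the image of $d$ is $V_\lambda$, the image of each $a_i$ is $A_i$, and $\varphi(\vec A)$ holds; conversely, any $N$ as above gives rise to such $E$, $g$, $d$ and $a_0,\ldots,a_{k-1}$. Hence $\psi(\vec A)$ is equivalent to the existence of such an $N$, and therefore to $\varphi(\vec A)$.

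The only clause in $\psi$ that is not first-order over $\langle V_\lambda,\in\rangle$ is the well-foundedness of $E$, and this is exactly where the hypotheses $\cof{\lambda}=\omega$ and $\HHrm{\lambda}=V_\lambda$ are used. An $E$-descending $\omega$-sequence through $\mathrm{field}(E)\subseteq V_\lambda$ is a subset of $\omega\times V_\lambda\subseteq V_\lambda$, hence a legitimate second-order object over $V_\lambda$, but it need not be an element of $V_\lambda$, since cofinal $\omega$-sequences in $\lambda$ have rank $\lambda$ and therefore lie outside $V_\lambda=\HHrm{\lambda}$. So well-foundedness of $E$ cannot be expressed by a first-order formula; instead it is expressed by the $\Pi^1_1$-statement ``$\forall s$, if $s$ is a function from $\omega$ to $\mathrm{field}(E)$, then $\exists n\,\neg(s(n+1)\mathrel E s(n))$'', whose universal second-order quantifier ranges over exactly the potential ill-foundedness witnesses. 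Prefixing the existential block $\exists E\,\exists g$ to this $\Pi^1_1$-matrix yields a $\Sigma^1_2$-formula, as required. I expect the treatment of this well-foundedness clause, together with the verification that the coding transports $\varphi(\vec A)$ faithfully in both directions, to be the only points that require genuine care; the remainder is a routine transcription of the proof of \cite[Lemma 25.25]{J03}.
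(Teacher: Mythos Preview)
Your proposal is correct and follows exactly the approach the paper indicates: the paper does not give a proof of this proposition at all, but merely states that it is ``a routine adaptation of the proof of {\cite[Lemma 25.25]{J03}} to higher cardinals of countable cofinality,'' which is precisely what you have carried out. Your identification of the well-foundedness clause as the one place where the hypotheses $\cof{\lambda}=\omega$ and $\HHrm{\lambda}=V_\lambda$ genuinely matter is on target, and the coding via $E$, $g$, $d$, and the $a_i$ transports $\varphi(\vec A)$ faithfully in both directions.
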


We will later show (see Corollary \ref{corollary:TranslateBack}) that, in certain contexts, it is also possible to translate $\Sigma^1_2$-formulas into $\Sigma_1$-formulas. 
 Moreover, note that, in \cite{DMR23},  Luca Motto Ros and the first author prove that, analogous to the classical setting, for every singular strong limit cardinal $\lambda$ of countable cofinality, every $\mathbf{\Sigma}^1_1$-subset of ${}^\omega\lambda$ ({i.e.,} every subset of ${}^\omega\lambda$ that is definable over $V_\lambda$ by a $\Sigma_1^1$-formula with parameters in $V_{\lambda+1}$) of cardinality greater than $\lambda$ contains the range of a perfect embedding of ${}^\omega\lambda$ into itself. In addition, still completely analogous to the classical setting, they show that, if $V=L$ holds and $\lambda$ is a singular cardinal of countable cofinality, then there is a subset of ${}^\omega\lambda$ of cardinality $\lambda^+$ that is definable over $V_\lambda$ by a $\Pi^1_1$-formula without parameters.

In addition, we later will consider an analog of the Baire Property to $\lambda$, that we call \emph{$\vec{\mathcal{U}}$-Baire property} (see Definition \ref{definition:U-BP} below). 
 In analogy with Theorem \ref{thm:Main1}, the existence of an I2-embedding with supremum of the critical sequence $\lambda$ implies that every subset of \(\Clb\) that is definable by a $\Sigma_1$-formula with parameters in $V_\lambda\cup\{V_\lambda,\vec{\lambda}\}$  has the $\vec{\mathcal{U}}$-Baire property (see Theorem \ref{thm:BP for lightface} below).
 Moreover, in analogy with Theorem \ref{thm:PerfectI0}, the existence of  an I0-embedding with supremum of the critical sequence $\lambda$ implies that every subset of \(\Clb\) in $L_1(V_{\lambda+1})$ has the $\vec{\mathcal{U}}$-Baire property (see Theorem \ref{thm:I0-lightface-BP} below). 
  Finally, as a negative result, we show that, in the inner model constructed in the proof of Theorem \ref{thm:NoI2boldface}, there exists an I2-embedding with supremum of the critical sequence $\lambda$ and a subset of  \(\Clb\) without the $\vec{\mathcal{U}}$-Baire property that is definable by a $\Sigma_1$-formula with parameters in $V_{\lambda+1}$ (see Theorem \ref{theorem:Sigma1No-UBP} below).


\section{Negative results}\label{section:Negative}

In this section, we will prove the restricting results stated in the introduction (Theorems \ref{thm:NegativeLimitMeasurable} and \ref{thm:NoI2boldface}). 
Theorem \ref{thm:NegativeLimitMeasurable} motivates the formulation of the main result of this paper (Theorem \ref{thm:Main1}) by showing that its conclusion cannot be derived from the weaker  large cardinal assumptions used in Theorem \ref{thm:LM}. 
On the other hand, Theorem \ref{thm:NoI2boldface} shows that the statement of Theorem \ref{thm:Main1}  cannot be strengthened to  affect all sets  that are $\Sigma_1$-definable from arbitrary subsets of the given singular cardinal. 
 In the following, we use  arguments  based on ideas and notions that were already used in {\cite[Section 4]{LM21}}.

\begin{defn}
 Let $\vec{\lambda}=\seq{\lambda_n}{n<\omega}$ be a strictly increasing sequence of cardinals with supremum $\lambda$  and let $\vec{a}=\seq{a_\alpha}{\alpha<\lambda}$ be a  sequence of elements of $V_{\lambda}$. 
 \begin{enumerate-(i)}
     \item Given $x\subseteq\lambda$, we define $\lhd_x$ to be the unique binary relation on $\lambda$ with the property that $$\alpha\lhd_x\beta ~ \Longleftrightarrow ~ \goedel{\alpha}{\beta}\in x$$ holds for all $\alpha,\beta<\lambda$.  
     
     \item We define $\WW\OO_\lambda$ to be the set of all $x\in\POT{\lambda}$ with the property that $\lhd_x$ is a well-ordering of $\lambda$. 
     
    \item We let $\mathsf{WO}(\vec{\lambda},\vec{a})$ denote the set of all $b\in{}^\omega\lambda$ with the property that there exists $x\in\WW\OO_\lambda$ such that  $x\cap\lambda_n=a_{b(n)}$ holds for all $n<\omega$. 
     
    \item Given an element  $b$ of $\mathsf{WO}(\vec{\lambda},\vec{a})$, we let $\|b\|_{\vec{a}}$ denote the order-type of the resulting well-order  $\langle\lambda,\lhd_{\hspace{0.9pt}\bigcup \Set{a_{b(n)}}{n<\omega}}\rangle$. 
 \end{enumerate-(i)}
\end{defn}

The following \emph{Boundedness Lemma} now follows from the theory developed in {\cite[Section 4]{LM21}} that generalizes classical arguments from descriptive set theory to singular strong limit cardinals of countable cofinality:

\begin{lem}[{\cite[Lemma 4.5]{LM21}}]\label{lemma:BoundednessLemma+}
 Let $\vec{\lambda}=\seq{\lambda_n}{n<\omega}$ be a strictly increasing sequence of inaccessible cardinals with supremum $\lambda$  and let $\vec{a}=\seq{a_\alpha}{\alpha<\lambda}$ be an enumeration  of $\HHrm{\lambda}$. 
 If $\map{f}{{}^\omega\lambda}{{}^\omega\lambda}$ is a continuous function with $\ran{f}\subseteq\mathsf{WO}(\vec{\lambda},\vec{a})$, 
 then there exists an ordinal $\gamma<\lambda^+$ with $\|f(c)\|_{\vec{a}}<\gamma$ for all $c\in{}^\omega\lambda$. 
 %
\end{lem}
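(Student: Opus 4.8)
The plan is to reproduce the classical boundedness argument for $\Sigma^1_1$-well-ordered families, transported to the singular-of-countable-cofinality setting via the machinery of \cite[Section 4]{LM21}. The idea is the usual one: if no bound $\gamma<\lambda^+$ exists, then the ranks $\|f(c)\|_{\vec a}$ are cofinal in $\lambda^+$, and this lets us reduce a non-$\mathbf{\Sigma}^1_1$ set — namely a ``canonical'' $\lambda^+$-complete $\Pi$-type set, e.g. the set of codes for ill-founded relations, or the set of codes of well-orders of length $\geq$ a fixed large ordinal — to a $\mathbf{\Sigma}^1_1$ set by composing with $f$, contradicting the analog of Souslin's theorem ($\mathbf{\Sigma}^1_1 \neq \mathbf{\Pi}^1_1$ at $\lambda$, which is part of the classical-generalization package of \cite{LM21}).

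**The key steps, in order, would be:** First, recall from \cite[Section 4]{LM21} that $\mathsf{WO}(\vec\lambda,\vec a)$ is a $\mathbf{\Sigma}^1_1$ subset of ${}^\omega\lambda$ (it is the projection of the relation ``$x\in\WW\OO_\lambda$ and $x$ codes $b$ along $\vec a$'', with $\WW\OO_\lambda$ itself $\Pi^1_1$), and that the rank function $b\mapsto\|b\|_{\vec a}$ is a $\mathbf{\Sigma}^1_1$-norm on it — meaning the relations $b\preceq^* b'$ ($\|b\|\le\|b'\|$, defined on all of ${}^\omega\lambda$ to be automatically true when $b\notin\mathsf{WO}$) and $b\prec^* b'$ are both $\mathbf{\Sigma}^1_1$. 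Second, note that $f$ is continuous, hence $\mathbf{\Sigma}^1_1$-measurable, so for any fixed $d\in{}^\omega\lambda$ the set $A_d=\Set{c\in{}^\omega\lambda}{\|f(c)\|_{\vec a}\le\|d\|_{\vec a}}=\Set{c}{f(c)\preceq^* d}$ is $\mathbf{\Sigma}^1_1$; if $\|f(c)\|_{\vec a}$ were unbounded in $\lambda^+$ below some fixed $d$ this would already be contained in $\mathsf{WO}$. Third — the crux — suppose toward a contradiction that $\sup_{c}\|f(c)\|_{\vec a}=\lambda^+$. Then every $b\in\mathsf{WO}(\vec\lambda,\vec a)$ satisfies $b\in\mathsf{WO}$ and there is $c$ with $\|f(c)\|_{\vec a}>\|b\|_{\vec a}$; conversely an arbitrary $b\in{}^\omega\lambda$ lies in $\mathsf{WO}(\vec\lambda,\vec a)$ iff $\exists c\,(b\prec^* f(c))$, because if $b$ is ``bad'' then $b\prec^* f(c)$ holds vacuously for \emph{no} $c$ under the standard convention — one must be careful here, and the correct formulation (exactly as in the classical proof) is: $b\in\mathsf{WO}(\vec\lambda,\vec a)\iff\exists c\,(f(c)\not\prec^* b)$ is false, so instead one shows $\mathsf{WO}(\vec\lambda,\vec a)$ would be $\mathbf{\Pi}^1_1$ via $b\in\mathsf{WO}(\vec\lambda,\vec a)\iff \forall c\,(b\prec^* f(c)\text{ or }b\preceq^* f(c)\text{ appropriately})$, i.e. $\mathsf{WO}$ being simultaneously $\mathbf{\Sigma}^1_1$ and $\mathbf{\Pi}^1_1$. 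Fourth, invoke the generalized Souslin theorem from \cite{LM21} (the analog of $\mathbf{\Sigma}^1_1\cap\mathbf{\Pi}^1_1=\mathbf{\Delta}^1_1\neq$ everything, or more directly the fact that $\mathsf{WO}(\vec\lambda,\vec a)$ is properly $\mathbf{\Sigma}^1_1$, not $\mathbf{\Pi}^1_1$, since it admits a $\mathbf{\Sigma}^1_1$-norm of length $\lambda^+$) to derive the contradiction.

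**The main obstacle** I anticipate is getting the logical form of the reduction exactly right: in the classical proof one exploits that ``$b$ codes an ordinal $<\gamma$'' is equivalent, when ranks are unbounded, to ``$\exists c\ (\|b\|<\|f(c)\|)$'', and that this equivalence upgrades $\mathsf{WO}$ to $\mathbf{\Pi}^1_1$ (or shows a $\Sigma^1_1$ set is all of ${}^\omega\lambda$ restricted to an unbounded rank segment), contradicting non-reflection of the norm. Transporting this requires that all the relevant closure and coding facts (that ${}^\omega\lambda$-continuous preimages of $\mathbf{\Sigma}^1_1$ sets are $\mathbf{\Sigma}^1_1$; that $\WW\OO_\lambda$ is $\mathbf{\Pi}^1_1$-complete; that $\mathbf{\Sigma}^1_1$-norms of length $\lambda^+$ on $\mathbf{\Sigma}^1_1$ sets exist but such sets cannot be $\mathbf{\Pi}^1_1$) are all available in \cite[Section 4]{LM21}, which the surrounding text asserts, so the work is really just bookkeeping: track where the strong-limit and inaccessibility hypotheses on the $\lambda_n$ are used (they guarantee $\HHrm{\lambda}=V_\lambda$ has size $\lambda$, so the enumeration $\vec a$ exists and the coding goes through, and they give the Gödel-pairing closure needed for $\goedel{\cdot}{\cdot}$ to behave on each $V_{\lambda_n}$). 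Since Lemma~\ref{lemma:BoundednessLemma+} is explicitly cited as \cite[Lemma 4.5]{LM21}, the honest ``proof'' here is simply to point to that reference and sketch the boundedness-via-norm argument above as the underlying mechanism; I would not expect to reprove the entire $\mathbf{\Sigma}^1_1$-theory of ${}^\omega\lambda$ from scratch.
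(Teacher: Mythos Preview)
The paper does not prove this lemma at all; it is quoted verbatim as \cite[Lemma 4.5]{LM21} with a one-sentence pointer to the theory developed in \cite[Section~4]{LM21}. Your instinct to simply cite that reference and sketch the classical boundedness mechanism is therefore exactly what the paper does, and your high-level plan (continuous image is analytic; an analytic subset of the set of well-order codes has bounded rank via the norm argument) is the right one.

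That said, your sketch systematically swaps $\mathbf{\Sigma}^1_1$ and $\mathbf{\Pi}^1_1$, and this would derail the write-up if you tried to flesh it out. The set $\mathsf{WO}(\vec\lambda,\vec a)$ is $\mathbf{\Pi}^1_1$, not $\mathbf{\Sigma}^1_1$: the existential quantifier over $x\in\WW\OO_\lambda$ in its definition is spurious, since $x$ is uniquely determined as $\bigcup_n a_{b(n)}$, so membership reduces to a coherence condition on $b$ together with well-foundedness of the resulting relation, which is $\mathbf{\Pi}^1_1$. (Your proposed justification, ``projection of a relation with $\WW\OO_\lambda$ itself $\mathbf{\Pi}^1_1$'', would at best give $\mathbf{\Sigma}^1_2$.) Correspondingly, the norm $b\mapsto\|b\|_{\vec a}$ is a $\mathbf{\Pi}^1_1$-norm, and the contradiction one derives from unboundedness is that $\mathsf{WO}(\vec\lambda,\vec a)$ would become $\mathbf{\Sigma}^1_1$ (via $b\in\mathsf{WO}(\vec\lambda,\vec a)\Leftrightarrow\exists c\,(b\prec^* f(c))$), not the other way around. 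The ``invoke the generalized Souslin theorem'' step then appeals to the fact that $\mathsf{WO}(\vec\lambda,\vec a)$ is properly $\mathbf{\Pi}^1_1$, i.e., $\mathbf{\Pi}^1_1$-complete in the higher sense. Once you flip the pointclasses back, your outline matches the argument in \cite{LM21}.
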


We start by limiting the provable structural consequences of I2-embeddings by proving the following strengthening of Theorem  \ref{thm:NoI2boldface} that shows that the statement of Theorem \ref{thm:Main1} cannot be strengthened to show that the existence of an I2-embedding at a cardinal $\lambda$ implies that every subset of $\POT{\lambda}$ that is definable by a $\Sigma_1$-formula with parameters in $V_{\lambda+1}$ either has cardinality $\lambda$ or contains the range of a perfect embedding:

\begin{thm}\label{theorem:NegativeI2Clambda}
 If $\map{j}{V}{M}$ is an I2-embedding with critical sequence $\vec{\lambda}=\seq{\lambda_n}{n<\omega}$ and $\lambda=\sup_{n<\omega}\lambda_n$, then the following statements hold in an inner model:   
 \begin{enumerate-(i)}
  \item There is an I2-embedding whose critical sequence has supremum $\lambda$. 
  
  \item    There is a subset $z$ of $\lambda$ and a subset $X$ of $C(\vec{\lambda})$ of cardinality greater than $\lambda$ such that $X$ does not contain the range of a perfect embedding of ${}^\omega\lambda$ into $C(\vec{\lambda})$  and the set $X$ is definable by a $\Sigma_1$-formula with parameter $z$. 
  \end{enumerate-(i)}
  \end{thm}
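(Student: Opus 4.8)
The plan is to realise the inner model as $M=L[z]$ for a well-chosen $z\subseteq\lambda$ and to take $X$ to be the image under a canonical coding map into $C(\vec{\lambda})$ of the set of all $<_M$-least subsets of $\lambda$ coding a well-order of $\lambda$; this mirrors the classical thin $\Sigma^1_2$-set under $V=L$ and the techniques of {\cite[Section~4]{LM21}}. First I would pass to the embedding $\map{i}{V_\lambda}{V_\lambda}$ supplied by the I2 characterisation in the introduction, with critical sequence $\vec{\lambda}$ and with $i_+$ sending well-founded relations on $V_\lambda$ to well-founded relations on $V_\lambda$. Since every $\lambda_n$ is measurable, hence inaccessible, we have $|V_{\lambda_n+1}|<\lambda_{n+1}$, so I can fix a bijection $\map{e}{\lambda}{V_\lambda}$ with $\POT{\lambda_n}\subseteq e[\lambda_{n+1}]$ for all $n<\omega$; then I let $z\subseteq\lambda$ code, through $e$ and the G\"odel pairing, the membership relation of $V_\lambda$ together with $i$ and $\vec{\lambda}$, and put $M=L[z]$. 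Since $z$ codes $e$, the model $M$ satisfies $V_\lambda\subseteq M$, $V_\lambda^M=V_\lambda=H_\lambda^M$, $\lambda$ is a strong limit, every $\lambda_n$ is measurable, and $e,i,\vec{\lambda}\in M$.

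For clause (i) the key observation is that $\map{i}{V_\lambda}{V_\lambda}$ remains elementary in $M$, that $i_+$ is computed the same way in $M$ as in $V$ (its definition uses only $i$, $V_\lambda$ and $\vec{\lambda}$), and that well-foundedness of a relation on $V_\lambda$ is absolute between $M$ and $V$: a model of $\ZFC$ regards such a relation as well-founded exactly when it carries a ranking function, and a ranking function is absolutely a ranking function. Consequently the well-foundedness-preservation of $i_+$ in $V$ holds verbatim in $M$, so by the quoted characterisation $M$ believes there is an I2-embedding whose critical sequence has supremum $\lambda$.

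For clause (ii) I would work inside $M$. Given $x\subseteq\lambda$ with $\lhd_x$ a well-order of $\lambda$, let $\mathsf{c}(x)\in C(\vec{\lambda})$ be the function with $\mathsf{c}(x)(0)=0$ and $\mathsf{c}(x)(n+1)=e^{-1}(x\cap\lambda_n)$, which lies below $\lambda_{n+1}$ by the choice of $e$; this coding map is injective. For each ordinal $\delta$ with $\lambda\le\delta<(\lambda^+)^M$ let $x^\delta$ be the $<_M$-least subset of $\lambda$ whose associated relation is a well-order of $\lambda$ of order type $\delta$, and set $X=\Set{\mathsf{c}(x^\delta)}{\lambda\le\delta<(\lambda^+)^M}\subseteq C(\vec{\lambda})$; then $|X|^M=(\lambda^+)^M>\lambda$. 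To make $X$ genuinely $\Sigma_1$ in $z$ (its naive definition has a universal quantifier over $<_M$-smaller codes and so is $\Sigma_2$), I would internalise all witnesses --- ranking functions, order isomorphisms, the canonical well-order of $L[z]$ --- into a single level of the hierarchy, verifying that $b\in X$ holds if and only if there is an ordinal $\beta$ such that $L_\beta[z]$ models a sufficient finite fragment of $\ZFC$ without power set together with ``$\lambda^+$ exists'' and satisfies the first-order statement that, after decoding $e$ and $\vec{\lambda}$ from $z$, the sequence $\seq{e(b(n+1))}{n<\omega}$ is coherent, $b(0)=0$, and its union $x$ is a well-order of $\lambda$ which is least, in the $L[z]$-order, among well-orders of $\lambda$ of the same order type; since satisfaction in $L_\beta[z]$ is $\Delta_1$ in $z$ and $\beta$, this definition is $\Sigma_1(z)$.

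Finally I would rule out perfect embeddings. Suppose in $M$ that $\map{\iota}{{}^\omega\lambda}{C(\vec{\lambda})}$ is a perfect embedding with $\ran{\iota}\subseteq X$. The map $\map{f}{{}^\omega\lambda}{{}^\omega\lambda}$ with $f(c)(n)=\iota(c)(n+1)$ is continuous, and taking $\vec{a}=e$ as an enumeration of $H_\lambda^M$ one gets $f(c)\in\mathsf{WO}(\vec{\lambda},\vec{a})$ with $\|f(c)\|_{\vec{a}}$ equal to the order type $\delta_c$ of the well-order coded by $\iota(c)$. Applying Lemma \ref{lemma:BoundednessLemma+} inside $M$ --- where $\vec{\lambda}$ is a strictly increasing sequence of inaccessibles with supremum $\lambda$ --- yields an ordinal $\gamma<(\lambda^+)^M$ with $\delta_c<\gamma$ for all $c$. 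But $\iota$ is injective and, on $X$, the value $\mathsf{c}(x^\delta)$ both determines and is determined by $\delta$, so $c\mapsto\delta_c$ is an injection of ${}^\omega\lambda$ into $\gamma$, contradicting $|{}^\omega\lambda|^M=(\lambda^\omega)^M>\lambda\ge|\gamma|^M$, which holds by K\"onig's theorem since $\cof{\lambda}=\omega$ in $M$. The hard part of the argument is clause (i): one is tempted to think $L[z]$ is too thin to still satisfy I2 and to reach for a fancier inner model, but absoluteness of well-foundedness makes the I2-property of $i$ descend to $L[z]$ for free; a secondary point requiring care is keeping the definition of $X$ at the level $\Sigma_1$ rather than $\Sigma_2$.
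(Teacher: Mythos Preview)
Your proposal is correct and follows essentially the same route as the paper: pass to $L[z]$ for a $z\subseteq\lambda$ coding $V_\lambda$, $\vec{\lambda}$ and $j\restriction V_\lambda$, verify that the I2-property of the induced $\map{i}{V_\lambda}{V_\lambda}$ descends to $L[z]$ by absoluteness of well-foundedness, take $X$ to be the $C(\vec{\lambda})$-codes of the $<_{L[z]}$-least well-orders of $\lambda$ of each order type, and block perfect embeddings via the Boundedness Lemma~\ref{lemma:BoundednessLemma+}. The only cosmetic differences are that the paper separates the subset defining the inner model from the parameter $z$ used in the $\Sigma_1$-definition, and it handles the $\Sigma_1$-complexity by observing directly that $\WW\OO_\lambda$ is $\Delta_1(\lambda)$ and that proper initial segments of $<_{L[y]}$ form a $\Sigma_1(z)$-class, whereas you package the same condensation argument as reflection into a suitable $L_\beta[z]$; both yield the same conclusion.
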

  
\begin{proof}
 Since $\lambda$ is a limit of inaccessible cardinals, we can find a subset $y$ of $\lambda$ with the property that $V_\lambda\cup\{\vec{\lambda}, ~  j\restriction V_\lambda\}\subseteq L[y]$. 
 Since this setup ensures that $$(j\restriction V_\lambda)_+^{L[y]} ~ = ~  (j\restriction V_\lambda)_+\restriction V_{\lambda+1}^{L[y]},$$ 
 we know that $(j\restriction V_\lambda)_+^{L[y]}$  maps well-founded relations on $V_\lambda$ in $L[y]$ to well-founded relations on $V_\lambda$ in $L[y]$ and 
 it follows that $j\restriction V_\lambda$ witnesses that, in $L[y]$, there is an I2-embedding whose critical sequence has supremum $\lambda$.

 Now, work in $L[y]$. First, observe that the set $\WW\OO_\lambda$ consists of all subsets $x$ of $\lambda$ with the property that there exists an ordinal $\gamma$ and an order isomorphism between $\langle\lambda,\lhd_x\rangle$ and $\langle\gamma,<\rangle$. In addition, the set $\WW\OO_\lambda$ also consists of all subsets $x$ of $\lambda$ such that  $\langle\lambda,\lhd_x\rangle$ is a linear ordering with the property that no injective sequence $\seq{\alpha_n}{n<\omega}$ is decreasing with respect to $\lhd_x$. This shows that  $\WW\OO_\lambda$   is $\Delta_1$-definable\footnote{Given a natural number $n>0$, a class $C$ is \emph{$\Delta_n$-definable} from a parameter $p$ if the classes $C$ and $V\setminus C$ are both definable by $\Sigma_n$-formulas with parameter $p$.} from the parameter $\lambda$. 
 Pick  an enumeration $\vec{a}=\seq{a_\alpha}{\alpha<\lambda}$ of $V_\lambda$ with $V_{\lambda_n}=\Set{a_\alpha}{\alpha<\lambda_n}$  for all $n<\omega$. 
 Then there exists an unbounded subset $z$ of $\lambda$ with the property that the sets $\{\vec{a}\}$, $\{y\}$ and $\{\vec{\lambda}\}$ are all definable by $\Sigma_1$-formulas with parameter $z$. Note that this implies that these sets are actually $\Delta_1$-definable from the parameter $z$. 
 Note that an element $b$ of ${}^\omega\lambda$ is not contained in $\mathsf{WO}(\vec{\lambda},\vec{a})$ if and only if either there are $m<n<\omega$ with $a_{b(n)}\cap\lambda_m\neq a_{b(m)}$ or there exists $x\in\POT{\lambda}\setminus\WW\OO_\lambda$ with   $x\cap\lambda_n=a_{b(n)}$ holds for all $n<\omega$. Together with our earlier observations, this shows that the set $\mathsf{WO}(\vec{\lambda},\vec{a})$ is $\Delta_1$-definable from the parameter $z$. 
 Given $\lambda\leq\gamma<\lambda^+$, we now let $b_\gamma$ denote the $<_{L[y]}$-least element of $\mathsf{WO}(\vec{\lambda},\vec{a})$ with $\|b_\gamma\|_{\vec{a}}=\gamma$ and $b_\gamma(n)<\lambda_{n+1}$  for all $n<\omega$. 
  Note that our setup ensures that such a set exists for all    $\lambda\leq\gamma<\lambda^+$. 
 Moreover, since the basic structure theory of $L[y]$ ensures that the class of proper initial segments of $<_{L[y]}$ is definable by a $\Sigma_1$-formula with parameter $z$, the fact that $\mathsf{WO}(\vec{\lambda},\vec{a})$ is $\Delta_1$-definable from the parameter $z$ yields a $\Sigma_1$-formula $\varphi(v_0,v_1,v_2)$ with the property that $\varphi(\gamma,b,z)$ holds if and only if $\gamma$ is an ordinal in the interval $[\lambda,\lambda^+)$ and $b=b_\gamma$. 
 Let $X$ denote the set of all $b\in{}^\omega\lambda$ with the property that $b(0)=0$ and there exists $\lambda\leq\gamma<\lambda^+$ with $b(n+1)=b_\gamma(n)$ for all $n<\omega$. We then know that $X$ is a subset of $C(\vec{\lambda})$ of cardinality greater than $\lambda$ that  is definable by a $\Sigma_1$-formula with parameter $z$.

 Assume, towards a contradiction, that there is a perfect embedding $\map{\iota}{{}^\omega\lambda}{C(\vec{\lambda})}$ with $\ran{\iota}\subseteq X$. Set $Y=\Set{b_\gamma}{\lambda\leq\gamma<\lambda^+}$ and let $\map{\zeta}{X}{Y}$ denote the unique map with $\zeta(b)(n)=b(n+1)$ for all $b\in X$ and $n<\omega$. Then $\zeta$ is a homeomorphism of the subspace $X$ of $C(\vec{\lambda})$ and the subspace $Y$ of $C(\vec{\lambda})$. In particular, it follows that $\zeta\circ\iota$ is a perfect embedding of ${}^\omega\lambda$ into $C(\vec{\lambda})$ with $\ran{\zeta\circ\iota}\subseteq Y\subseteq\mathsf{WO}(\vec{\lambda},\vec{a})$. 
 In this situation, Lemma \ref{lemma:BoundednessLemma+} yields $c,d\in{}^\omega\lambda$ with $c\neq d$ and $\|(\zeta\circ\iota)(c)\|_{\vec{a}}=\|(\zeta\circ\iota)(d)\|_{\vec{a}}$. By the definition of $Y$, this is a contradiction.  
\end{proof}

 Note that, in order to construct an inner model $N$ with $V_\lambda\subseteq N$ and the property that (ii) of the above theorem holds, it suffices to assume that $\lambda$ is the supremum of $\omega$-many inaccessible cardinals in order to carry out the construction made in the proof of the theorem. 

 In the remainder of this section, we further develop the arguments used in the above proof to obtain the following strengthening of Theorem \ref{thm:NegativeLimitMeasurable}:

 \begin{thm}\label{thm:NegativeLimitMeasurableC-lambda}
	If $\vec{\lambda}=\seq{\lambda_n}{n<\omega}$ is a strictly increasing sequence of measurable cardinals with limit $\lambda$, then the following statements hold in an inner model $M$ containing $\vec{\lambda}$:  
	\begin{enumerate-(i)}
	 \item The sequence $\vec{\lambda}$ consists of measurable cardinals. 
	 
	 \item If $\vec{\nu}$ is a strictly increasing $\omega$-sequence of  cardinals of uncountable cofinality with limit $\lambda$, then for some $x\in \HHrm{\aleph_1}$,  there is a subset of $C(\vec{\lambda})$ of cardinality greater than $\lambda$ that does not contain the range of a perfect embedding of ${}^\omega\lambda$ into $C(\vec{\lambda})$ and is definable  by a $\Sigma_1$-formula with parameters  $\vec{\nu}$ and $x$.  
	\end{enumerate-(i)} 
\end{thm}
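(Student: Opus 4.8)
The plan is to imitate the proof of Theorem \ref{theorem:NegativeI2Clambda}, but carried out inside an inner model $M$ in which $V_\lambda$ is so definable that the unbounded parameter $z\subseteq\lambda$ used there can be replaced by $\vec{\nu}$ together with a single real. Concretely, fix normal measures $\vec{U}=\seq{U_n}{n<\omega}$ with $U_n$ a normal measure on $\lambda_n$, and set $M=L[\vec{U}]$. Standard results on the inner models $L[\vec{U}]$ (see \cite{MR1994835}) give that $M$ is a model of $\ZFC$ in which $\mathrm{GCH}$ holds and each $U_n\cap M$ is a normal measure on $\lambda_n$; this yields clause (i), and since $\vec{\lambda}\in M$ witnesses $\cof{\lambda}=\omega$, it follows that $\lambda$ is a singular strong limit cardinal of $M$, so $\HHrm{\lambda}^M=V^M_\lambda$. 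The essential additional input — and the step I expect to be the main obstacle — is a fine-structural analysis showing that, in $M$, the sequence $\vec{\lambda}$, a well-ordering $<_M$ of $\POT{\lambda}^M$ with $\Sigma_1$-definable proper initial segments, and an enumeration $\vec{a}=\seq{a_\xi}{\xi<\lambda}$ of $V^M_\lambda$ with $V^M_{\lambda_n}=\Set{a_\xi}{\xi<\lambda_n}$ for all $n<\omega$ are all $\Sigma_1$-definable over $M$ from $\lambda$ and some real $x\in\HHrm{\aleph_1}^M$; here $x$ encodes the combinatorial data needed to $\Sigma_1$-certify the canonical measures and the relevant initial segments (for sufficiently $L$-like $M$ one expects $x$ to be redundant). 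This is also the point at which the measurability of the $\lambda_n$ does real work beyond yielding clause (i).

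Granting this, I would work in $M$ and fix $\vec{\nu}$ as in (ii), so $\lambda=\sup\vec{\nu}$. Using that $\vec{\nu}$ and $\vec{\lambda}$ are both cofinal $\omega$-sequences in $\lambda$, one passes $\Sigma_1$-definably from $\vec{\nu}$ and $x$ to a thinning $\vec{\nu}^{*}=\seq{\nu^{*}_n}{n<\omega}$ of $\vec{\nu}$ — again a strictly increasing $\omega$-sequence of cardinals of uncountable cofinality with supremum $\lambda$ — for which $n\mapsto k(n):=\min\Set{k}{\lambda_k>\nu^{*}_n}$ is injective. As in the proof of Theorem \ref{theorem:NegativeI2Clambda}, the set $\WW\OO_\lambda$ is $\Delta_1$-definable from $\lambda$, hence $\mathsf{WO}(\vec{\nu}^{*},\vec{a})$ and the norm $b\mapsto\|b\|_{\vec{a}}$ are $\Delta_1$-definable from $\vec{\nu}$ and $x$. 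For $\lambda\le\gamma<\lambda^{+}$, let $x_\gamma$ be the $<_M$-least member of $\WW\OO_\lambda$ whose associated well-order has order type $\gamma$, and let $b_\gamma\in{}^\omega\lambda$ take the $\vec{a}$-index of $x_\gamma\cap\nu^{*}_n$ at coordinate $k(n)$ and the value $0$ at all other coordinates. Since $x_\gamma\cap\nu^{*}_n\in V^M_{\nu^{*}_n+1}\subseteq V^M_{\lambda_{k(n)}}=\Set{a_\xi}{\xi<\lambda_{k(n)}}$, we get $b_\gamma\in C(\vec{\lambda})$, while $\bigcup_{n<\omega}\bigl(x_\gamma\cap\nu^{*}_n\bigr)=x_\gamma$ gives $\|b_\gamma\|_{\vec{a}}=\gamma$, so $\gamma\mapsto b_\gamma$ is injective. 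Put $X=\Set{b\in{}^\omega\lambda}{b(0)=0 \wedge \exists\gamma\in[\lambda,\lambda^{+})\,\forall n<\omega\, b(n+1)=b_\gamma(n)}$. The initial-segment trick from the proof of Theorem \ref{theorem:NegativeI2Clambda} — the predicate stating that $\gamma\in[\lambda,\lambda^{+})$ and $b=b_\gamma$ is witnessed by a proper initial segment of $(M,<_M)$ and is therefore $\Sigma_1$ — then shows that $X$ is $\Sigma_1$-definable from $\vec{\nu}$ and $x$; moreover $X\subseteq C(\vec{\lambda})$ and $\vert X\vert=\lambda^{+}>\lambda$.

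Finally, suppose towards a contradiction that $\map{\iota}{{}^\omega\lambda}{C(\vec{\lambda})}$ is a perfect embedding with $\ran{\iota}\subseteq X$. With $Y=\Set{b_\gamma}{\lambda\le\gamma<\lambda^{+}}$, the shift $\map{\zeta}{X}{Y}$ given by $\zeta(b)(n)=b(n+1)$ is a homeomorphism between the corresponding subspaces of $C(\vec{\lambda})$, so $\zeta\circ\iota$ is a continuous injection of ${}^\omega\lambda$ into $C(\vec{\lambda})$ with $\ran{\zeta\circ\iota}\subseteq Y\subseteq\mathsf{WO}(\vec{\nu}^{*},\vec{a})$. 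Invoking the routine generalization of Lemma \ref{lemma:BoundednessLemma+} in which the sequence indexing $\mathsf{WO}(\,\cdot\,,\vec{a})$ need only consist of cardinals of uncountable cofinality (rather than inaccessible cardinals), we obtain $\gamma<\lambda^{+}$ with $\|(\zeta\circ\iota)(c)\|_{\vec{a}}<\gamma$ for all $c\in{}^\omega\lambda$. Since $\vert{}^\omega\lambda\vert=\lambda^{+}$ in $M$ while $b\mapsto\|b\|_{\vec{a}}$ is injective on $Y$, a counting argument yields $c\ne d$ in ${}^\omega\lambda$ with $(\zeta\circ\iota)(c)=(\zeta\circ\iota)(d)$, contradicting injectivity of $\iota$. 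All of this is a transcription of the proof of Theorem \ref{theorem:NegativeI2Clambda} together with the cited boundedness machinery; the genuinely new content — and the main difficulty — is the inner-model-theoretic analysis behind the $\Sigma_1$-definability claims of the first paragraph.
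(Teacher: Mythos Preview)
Your proposal correctly identifies $L[\vec{U}]$ as the inner model and transcribes the machinery of Theorem~\ref{theorem:NegativeI2Clambda}, but the step you flag as ``the main obstacle'' is not a technicality to be filled in later---it is the heart of the matter, and the paper resolves it by a device you do not mention at all: \emph{iteration of the measures}. Inside $L[\vec{U}]$ there is no obvious $\Sigma_1$-definition of $\vec{\lambda}$, $<_{L[\vec{U}]}$, or a suitable $\vec{a}$ from $\lambda$ and a real. The naive attempt (``$\vec{\lambda}$ enumerates the measurables below $\lambda$'') is not $\Sigma_1$, and a local characterization of the form ``$N=L_\gamma[\vec{F}]$ for some internal sequence $\vec{F}$ of normal measures on ordinals cofinal in $\lambda$'' does not pin down a unique $\vec{F}$: internal iterates of $L[\vec{U}]$ supply many competing candidates, and nothing in your formulation rules them out.

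The paper instead works in $L[\mathcal{U}]$ and applies iterated ultrapowers to obtain an elementary $j\colon L[\mathcal{U}]\to M$ with $j(\lambda_n)=\nu_{x(n)}$ for a real $x$, together with clubs $C_n\subseteq\nu_{x(n)}$ such that each $j(U_n)$ is the \emph{tail filter on $C_n$} restricted to $M$. The class $\mathcal{N}$ of pairs $(N,\vec{F})$ in which each $F_n$ is generated over $N$ by the tail of some club of $\nu_{x(n)}$ is then $\Sigma_1$ in $\vec{\nu}$ and $x$, and an intersection-of-clubs argument (the first Claim in the paper's proof) shows every such pair satisfies $N=L_{N\cap\Ord}[\mathcal{V}]$ with $\mathcal{V}=j(\mathcal{U})$. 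This uniqueness is exactly what makes the enumeration $\vec{a}$ and the canonical well-order $\Sigma_1$-definable from $\vec{\nu}$ and $x$. So the real $x$ does not encode any fine structure; it simply records which subsequence of $\vec{\nu}$ arises as $j[\vec{\lambda}]$, and the iteration is what replaces the analysis you were hoping for.

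There is also a coding slip in your construction: with $b_\gamma(m)=0$ at coordinates $m\notin\ran{k}$ you get $a_{b_\gamma(m)}=a_0$ rather than $x_\gamma\cap\nu^*_m$, so $b_\gamma\notin\mathsf{WO}(\vec{\nu}^*,\vec{a})$ as written and the Boundedness Lemma does not apply to $Y$. This is repairable once the definability is sorted out (the paper handles the analogous reindexing via a separate homeomorphism $\zeta$), but as stated the inclusion $Y\subseteq\mathsf{WO}(\vec{\nu}^*,\vec{a})$ fails.
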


\begin{proof}
 Pick a sequence $\seq{U_n}{n<\omega}$  with the property that $U_n$ is a normal ultrafilter on $\lambda_n$ for all $n<\omega$ and  define $$\UU ~ = \Set{\langle n,A\rangle}{n<\omega, ~ A\in U_n}.$$ Then $\vec{\lambda}\in L[\UU]$ and for every $n<\omega$, the cardinal $\lambda_n$ is measurable in $L[\UU]$. 

 Now, work in $L[\UU]$ and fix a strictly increasing sequence $\vec{\nu}=\seq{\nu_n}{n<\omega}$  of  cardinals of uncountable cofinality with limit $\lambda$. Using standard arguments about iterated measurable ultrapowers (see {\cite[Lemma 19.5]{MR1994835}} and {\cite[Section 3]{MR3411035}}), we can find  
  \begin{itemize}
   \item a transitive class $M$, 
   
   \item an elementary embedding $\map{j}{V}{M}$ with $j(\lambda)=\lambda$, 
   
   \item a function $\map{x}{\omega}{\omega}$, and 
   
   \item a sequence $\seq{C_n}{n<\omega}$ 
  \end{itemize}
  such that the following statements hold for all $n<\omega$: 
  \begin{enumerate-(i)}
   \item\label{thm:lim_meas_card-1} $j(\lambda_n)=\nu_{x(n)}$. 
   
   \item $\nu_{x(n+1)}>\vert{\HHrm{\nu_{x(n)}^+}}\vert$. 
   
   \item $C_n$ is a closed unbounded subset of $\nu_{x(n)}$. 
   
   \item\label{thm:lim_meas_card-4} $j(U_n)=\Set{A\in M\cap\POT{\nu_{x(n)}}}{\exists\xi<\nu_{x(n)} ~ C_n\setminus\xi\subseteq A}$. 
  \end{enumerate-(i)}

 Now, set $\VV=j(\UU)$ and define $\NN$ to be the class of all pairs $\langle N,\vec{F}\rangle$ with the property that $N$ is a  transitive set  of cardinality $\lambda$, $\vec{F}=\seq{F_n}{n<\omega}$ is a sequence of length $\omega$ and  there exists a sequence $\seq{D_n}{n<\omega}$ such that the following statements hold: 
 \begin{enumerate}
  \item[(a)] $D_n$ is a closed unbounded subset of $\nu_{x(n)}$ for all $n<\omega$. 
  
  \item[(b)] If $n<\omega$, then $F_n$ is an element of $N$, $\nu_{x(n)}$ is a regular cardinal in $N$ and $F_n$ is a normal ultrafilter on $\nu_{x(n)}$ in $N$. 
  
  \item[(c)]  If $n<\omega$, then $F_n=\Set{A\in N\cap\POT{\nu_{x(n)}}}{\exists\xi<\nu_{x(n)} ~D_n\setminus\xi\subseteq A}$. 
  
  \item[(d)] If $\FF  = \Set{\langle n,A\rangle}{n<\omega, ~ A\in F_n}$, then $\FF\in N$ and $N=L_{N\cap\Ord}[\FF]$. 
 \end{enumerate}
 It is easy to see that the class $\NN$ is definable by a $\Sigma_1$-formula with parameters $\vec{\nu}$ and $x$. 
 Moreover, our assumptions ensure that for every $x\in M\cap\POT{\lambda}$, there exists $\gamma<\lambda^+$ with $x\in L_\gamma[\VV]$ and $\langle L_\gamma[\VV],\seq{j(U_n)}{n<\omega}\rangle\in \NN$.

 \begin{claim*}
  If $\langle N,\seq{F_n}{n<\omega}\rangle\in\NN$ and $\FF  = \Set{\langle n,A\rangle}{n<\omega, ~ A\in F_n}$, then we have $\FF\cap N=\VV\cap L_{N\cap\Ord}[\VV]$ and $N=L_{N\cap\Ord}[\VV]$. 
 \end{claim*}
 
 \begin{proof}[Proof of the Claim]
  Let $\seq{D_n}{n<\omega}$ be a sequence that witnesses that $\langle N,\seq{F_n}{n<\omega}\rangle$ is contained in $\NN$. 
  Set  $\gamma=N\cap\Ord$. 
  By induction, we now show that $$\FF\cap L_\beta[\FF] ~ = ~ \VV\cap L_\beta[\VV]$$  holds for all $\beta\leq\gamma$.  Hence, assume that $\beta\leq\gamma$ with $\FF\cap L_\alpha[\FF]=\VV\cap L_\alpha[\VV]$ for all $\alpha<\beta$.  Then $L_\beta[\FF]=L_\beta[\VV]$. 
  Pick $n<\omega$ and $A\in F_n$ with $\langle n,A\rangle\in L_\beta[\FF]$. Then there exists $\xi<\nu_{x(n)}$ with $D_n\setminus\xi\subseteq A$. Since $C_n\cap D_n$ is unbounded in $\nu_{x(n)}$, we know that $A\cap C_n$ is unbounded in $\nu_{x(n)}$ and hence there is no $\zeta<\nu_{x(n)}$ with the property that $C_n\setminus\zeta\subseteq \nu_{x(n)}\setminus A$. 
  In this situation, the fact that $j(U_n)$ is an ultrafilter on $\nu_{x(n)}$ in $L[\VV]$ implies that $A\in j(U_n)$ and hence $\langle n,A\rangle\in j(\UU)\cap L_\beta[\VV]=\VV\cap L_\beta[\VV]$. 
  The dual argument then shows that we also have $\VV\cap L_\beta[\VV]\subseteq \FF\cap L_\beta[\FF]$. 
  This completes the induction and we know that $\FF\cap N=\VV\cap L_\gamma[\VV]$. 
  This allows us to conclude that $$N ~ = ~ L_\gamma[\FF] ~ = ~ L_\gamma[\FF\cap N] ~ = ~ L_\gamma[\VV\cap L_\gamma[\VV]] ~ = ~ L_\gamma[\VV],$$ completing the proof of the claim. 
 \end{proof}
 
    Now, note that  (ii) above ensures that there is a sequence $\seq{a_\alpha}{\alpha<\lambda}$ in $M$ with the property that 
    \begin{equation}\label{equation:EnumerateinLF}
      M\cap\POT{\nu_{x(n)}} ~ = ~ \Set{a_\alpha}{\alpha<\nu_{x(n+1)}}
   \end{equation} 
   holds for all $n<\omega$. Define $\vec{a}$ to be the $<_{L[\VV]}$-least sequence in $M$ with this property.

   \begin{claim*}
    The set $\{\vec{a}\}$ is definable by a $\Sigma_1$-formula with parameters $\vec{\nu}$ and $x$. 
   \end{claim*}
   
   \begin{proof}[Proof of the Claim]
    First, note that our previous claim implies that, if $\langle N,\seq{F_n}{n<\omega}\rangle$ is an element of $\NN$ with $\lambda\in N$, then $N=L_{N\cap\Ord}[\VV]$ and $N$ contains all bounded subsets of $\lambda$ in $M$. 
    It follows that $\vec{a}$ is the unique sequence of length $\lambda$ with the property that there exists $\langle N, \seq{F_n}{n < \omega}\rangle$ in $\mathcal{N}$ and $\FF = \Set{\langle n, A\rangle}{n < \omega, A \in F_n}$ such that $\vec{a}$ is the $<_{L[\FF]}$-least element of $N$ with \eqref{equation:EnumerateinLF} for all $n<\omega$. This characterization directly yields the desired $\Sigma_1$-definition.  
   \end{proof}

   Next, notice that, if $y$ is an element of $\WW\OO_\lambda^M$, then $M$ contains an order-isomorphism between $\langle \lambda,\lhd_y\rangle$ and $\langle\gamma,<\rangle$ for some ordinal $\gamma\in[\lambda,\lambda^+)$ and this isomorphism witnesses that $x$ is an element of $\WW\OO_\lambda$ in $V$. 
   This shows that     $\WW\OO_\lambda^M\subseteq\WW\OO_\lambda$, $\mathsf{WO}(\vec{\nu},\vec{a})^M\subseteq\mathsf{WO}(\vec{\nu},\vec{a})$ and $\|b\|_{\vec{a}}=\|b\|_{\vec{a}}^M$ for all $b\in\mathsf{WO}(\vec{\nu},\vec{a})^M$. 
  Moreover, using \eqref{equation:EnumerateinLF} and the fact that $\lambda^+=(\lambda^+)^M$, we can pick a sequence $\seq{b_\gamma}{\lambda\leq\gamma<\lambda^+}$ with the property that for all $\gamma<\lambda^+$, the set $b_\gamma$ is the $<_{L[\VV]}$-least element of $\mathsf{WO}(\vec{\nu},\vec{a})^M$ with the property that $\|b_\gamma\|_{\vec{a}}=\gamma$ and 
  $b_\gamma(x(n))<\nu_{x(n+1)}$ for all $n<\omega$. 
  The following statement now follows from a combination of the above claims:

  \begin{claim*}
   The set $B=\Set{b_\gamma}{\lambda\leq\gamma<\lambda^+}$ is definable by a $\Sigma_1$-formula with parameters $\vec{\nu}$ and $x$. \qed 
  \end{claim*}

  Given $\lambda\leq\gamma<\lambda^+$, we let $c_\gamma$ denote the unique element of ${}^\omega\lambda$ such that the following statements hold for all $n<\omega$: 
  \begin{itemize}
   \item If $n$ is of the form $x(m+1)$ for some $m<\omega$, then $c_\gamma(n)=b_\gamma(x(m))$. 
   
   \item If $n\neq x(m+1)$ for all $m<\omega$, then $c_\gamma(n)=0$. 
  \end{itemize}
  We then know that $c_\gamma\in C(\vec{\nu})$ for all $\lambda\leq\gamma<\lambda^+$. 
  
   \begin{claim*}
   The set $C=\Set{c_\gamma}{\lambda\leq\gamma<\lambda^+}$ has cardinality $\lambda^+$ and is definable by a $\Sigma_1$-formula with parameters $\vec{\nu}$ and $x$. \qed 
  \end{claim*}

 Let $\map{\zeta}{B}{C}$ denote the unique function with $\zeta(b_\gamma)=c_\gamma$ for all $\lambda\leq\gamma<\lambda^+$.

  \begin{claim*}
   The map $\zeta$ is a homeomorphism of the subspace $B$ of $C(\vec{\nu})$ and the subspace $C$ of $C(\vec{\nu})$.  \qed 
  \end{claim*}

  Now, assume, towards a contradiction, that there is a perfect embedding $\map{\iota}{{}^\omega\lambda}{C(\vec{\nu})}$ with the property that $\ran{\iota}\subseteq C$. Then $\zeta^{{-}1}\circ\iota$ is a perfect embedding of ${}^\omega\lambda$ into $C(\vec{\nu})$ and $$\ran{\zeta^{{-}1}\circ\iota} ~ \subseteq ~ B ~ \subseteq ~ \mathsf{WO}(\vec{\nu},\vec{a})^M ~ \subseteq ~ \mathsf{WO}(\vec{\nu},\vec{a}).$$ 
  An application of Lemma \ref{lemma:BoundednessLemma+} now yields $c,d\in{}^\omega\lambda$ with $c\neq d$ and $$\|(\zeta^{{-}1}\circ\iota)(c)\|_{\vec{a}} ~ = ~ \|(\zeta^{{-}1}\circ\iota)(d)\|_{\vec{a}},$$ contradicting the definition of $B$. 
\end{proof}


\section{$\Sigma_1$-definability at I2-cardinals}\label{section:Positive}

Let $\map{j}{V}{M}$ be an I2-embedding with critical sequence $\vec{\lambda}=\seq{\lambda_n}{n<\omega}$ and set $\lambda=\sup_{n<\omega}\lambda_n$. Classical results (see \cite{MR0562614}) then show that $j$ is \emph{$\omega$-iterable}, {i.e.,} there exists a commuting system $$\langle\seq{M^j_\alpha}{\alpha\leq\omega},\seq{\map{j}{M^j_\alpha}{M^j_\beta}}{\alpha\leq\beta\leq\omega}\rangle$$ of inner models and elementary embeddings such that the following statements hold: 
 \begin{enumerate-(i)}
  \item $M^j_0=V$ and $j_{0,1}=j$. 
  
  \item If $n<\omega$, then $j_{n+1,n+2}=\bigcup\Set{j_{n,n+1}(j_{n,n+1}\restriction V_\alpha)}{\alpha\in\Ord}$. 
  
  \item $\langle M^j_\omega,\seq{j_{n,\omega}}{n<\omega}\rangle$ is a direct limit of $$\langle\seq{M^j_n}{n<\omega},\seq{\map{j_{m,n}}{M^j_m}{M^j_n}}{m\leq n<\omega}\rangle.$$ 
 \end{enumerate-(i)}
  Given $m\leq n<\omega$, we then have $V_\lambda\subseteq M^j_\omega\subseteq M^j_n\subseteq M^j_m$,  $\crit{j_{n,{n+1}}}=\lambda_n=j_{m,n}(\lambda_m)$, $j_{m,n}(\lambda)=\lambda$ and $j_{n,\omega}(\lambda_n)=\lambda$. 
 Moreover, it is easy to see that $j_{0,\omega}(\lambda^+)=\lambda^+$ holds and therefore $(2^\lambda)^{M^j_\omega}<\lambda^+$. 
 Note that the \emph{Mathias criterion} shows that the sequence $\vec{\lambda}$ is Prikry-generic over $M^j_\omega$ and, by the theory of Prikry-type forcings, this implies that $(2^\lambda)^{M^j_\omega[\vec{\lambda}]}<\lambda^+$.

  The following theorem is the main result of this section. We will later show that it is a direct strengthening of Theorem \ref{thm:Main1}.

  \begin{thm}\label{thm:Main1PlusParameters}
   Let $\map{j}{V}{M}$ be an I2-embedding with critical sequence $\vec{\lambda}=\seq{\lambda_n}{n<\omega}$ and let $N$ be an inner model of $\ZFC$ with $M^j_\omega\cup\{\vec{\lambda}\}\subseteq N$. Set $\lambda=\sup_{n<\omega}\lambda_n$. 
   If $X$ is a subset of $C(\vec{\lambda})$ with $\vert X\vert>(2^\lambda)^N$ that is definable over $V_\lambda$ by a $\Sigma^1_2$-formula with parameters in $V_{\lambda+1}^N$,  then there is a perfect embedding $\map{\iota}{{}^\omega\lambda}{C(\vec{\lambda})}$ with $\ran{\iota}\subseteq X$. 
  \end{thm}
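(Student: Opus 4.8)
The plan is to combine a tree representation of $X$ coming from the $\omega$-iterability of $j$ with a Cantor--Bendixson analysis of that tree, in the spirit of the Mansfield--Solovay perfect set theorem but carried out at the singular cardinal $\lambda$. Throughout, for a tree $T$ searching for pairs consisting of a finite approximation to an element of $C(\vec{\lambda})$ and a finite approximation to an ordinal-valued certificate, $p[T]$ will denote the set of $x \in C(\vec{\lambda})$ through which $T$ has an infinite branch, computed in $V$, and $T_{(s,t)}$ the subtree of $T$ hanging below a node $(s,t)$.

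\textbf{Step 1: a small, correct tree for $X$ inside $M^j_\omega$.} The first and crucial step is to produce such a $T$ with $T \in M^j_\omega$, with certificates ranging only below $\gamma := (\lambda^+)^{M^j_\omega}$, and with $p[T] = X$. A node of $T$ should code an approximation to a triple $(x,Y,h)$, where $Y \subseteq V_\lambda$ witnesses the existential second-order quantifier in the $\Sigma^1_2$-definition of $X$ and $h$ is a rank function on the tree of potential counterexamples to the inner universal second-order quantifier; since $\lambda$ is a strong limit cardinal this counterexample tree has size at most $\lambda$, so its rank lies below $\lambda^+$, and $j_{0,\omega}(\lambda^+) = \lambda^+$ gives $\gamma \le (2^\lambda)^{M^j_\omega} < \lambda^+$. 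Soundness, $p[T] \subseteq X$, is routine. For completeness, $X \subseteq p[T]$, one must see that for \emph{every} $x \in X$ -- including those outside $M^j_\omega$ -- a witness $(Y,h)$ with $h$ of rank below $\gamma$ exists; I would obtain this by tracing the truth of the $\Sigma^1_2$-statement along the embeddings $j_{n,\omega}\colon M^j_n \to M^j_\omega$ and exploiting the homogeneity built into the $\omega$-step iteration of $j$, in the way a measurable cardinal is used in the Martin--Solovay analysis. Since $M^j_\omega \cup \{\vec{\lambda}\} \subseteq N$, this yields $T \in N$ with $|T|^N \le (2^\lambda)^N$.

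\textbf{Step 2: Cantor--Bendixson analysis.} Working in $V$, call a node $(s,t) \in T$ \emph{wide} if $\{\, x : x \in p[T_{(s,t)}] \,\}$ has cardinality greater than $\lambda$. Iterating the monotone operation sending a subtree $S \subseteq T$ to the set of its wide nodes (wideness computed inside $S$) produces a decreasing sequence of subtrees stabilising at a subtree $T^\infty$ all of whose nodes are wide. An element of $C(\vec{\lambda})$ lying on a branch of $T$ that leaves $T^\infty$ does so through some node $(s,t)$ removed at a stage at which $p[T_{(s,t)}]$ had at most $\lambda$-many distinct first coordinates, and at most $|T|^N \le (2^\lambda)^N$ nodes are ever removed; hence the set of such elements has cardinality at most $(2^\lambda)^N$. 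Since $|p[T]| = |X| > (2^\lambda)^N$, we conclude that $T^\infty \neq \emptyset$, and indeed $|p[T^\infty]| > (2^\lambda)^N$.

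\textbf{Step 3: extracting the perfect embedding.} Because $\lambda$ is a limit of inaccessible cardinals of countable cofinality, $\mu^\omega < \lambda$ for every $\mu < \lambda$. Using this, every wide node of $T^\infty$ has, for each $\mu < \lambda$, a wide extension in $T^\infty$ with at least $\mu$ immediate successors in $T^\infty$ having pairwise incompatible first coordinates: otherwise, below some wide node the first-coordinate tree of $T^\infty$ would be $<\mu$-branching and so have at most $\mu^\omega < \lambda$ branches, contradicting wideness. A standard fusion interleaving these splittings so that the branching grows to $\lambda$ then builds a continuous injection of $C(\vec{\mu})$, for a suitable strictly increasing sequence $\vec{\mu}$ with supremum $\lambda$, into $C(\vec{\lambda})$ that is a homeomorphism onto its closed range, with range contained in $p[T^\infty] \subseteq p[T] = X$. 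As $C(\vec{\mu})$ is homeomorphic to ${}^\omega\lambda$, composing with such a homeomorphism yields a perfect embedding $\iota\colon {}^\omega\lambda \to C(\vec{\lambda})$ with $\ran{\iota} \subseteq X$, as required.

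\textbf{Expected main obstacle.} The heart of the argument is Step 1: finding a tree representation of $X$ that is at once an element of $M^j_\omega$, small enough (certificates bounded by $(\lambda^+)^{M^j_\omega} < \lambda^+$) for the counting of Step 2 to succeed relative to $(2^\lambda)^N$, and correct in the strong sense $p[T] = X$ rather than merely $p[T] = X \cap M^j_\omega$. This is exactly where the I2-embedding -- rather than, say, $\omega$-many measurable cardinals -- is indispensable, and it accounts for the hypothesis $M^j_\omega \subseteq N$. Steps 2 and 3 are comparatively routine adaptations of the classical perfect set theorem, the only delicate point being the use of the strong-limit property of $\lambda$ to promote a binary splitting into a full ${}^\omega\lambda$-branching one; an alternative to this tree-based route is a direct level-by-level fusion in which the I2-embedding is applied at each stage to reflect the $\Sigma^1_2$-definition and to guarantee that the limit of the constructed conditions again lies in $X$, generalising the argument underlying Theorem~\ref{thm:LM}.
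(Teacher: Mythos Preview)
Your outline follows the paper's proof closely: both combine a Shoenfield-type tree representation of the $\Sigma^1_2$-set with a Mansfield-style derivative argument, and both correctly locate the indispensable use of the I2-embedding in Step~1, namely in showing that a tree living in the small model still projects onto all of $X$ in $V$.

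Where your sketch is too loose is precisely the mechanism of Step~1. The paper does \emph{not} build a Martin--Solovay tree from measures or use ``homogeneity of the iteration''; it uses Laver's argument. One takes the ordinary higher Shoenfield tree $S^V_{T,\theta}$ with $\theta=(\lambda^+)^V$ and observes that if $r$ is a rank function on a finite level of the counterexample tree with values in $\theta$, then $j_{0,\omega}\circ r$ is again such a rank function, has values below $j_{0,\omega}(\theta)=\theta$, and \emph{is an element of $M^j_\omega$}. (This last clause --- that $j_{0,\omega}\circ f\in M^j_\omega$ whenever $f\colon d\to\Ord$ with $d\in V_\lambda$ --- is exactly the point where I2 is used.) Sending $r\mapsto j_{0,\omega}\circ r$ on the last coordinate gives an inclusion-preserving embedding $\Lambda\colon S^V_{T,\theta}\to S^N_{T,\theta}$, so $p[S^V_{T,\theta}]^V\subseteq p[S^N_{T,\theta}]^V$; the reverse inclusion is immediate from $S^N_{T,\theta}\subseteq S^V_{T,\theta}$. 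Your allusion to Martin--Solovay does not supply this, and as written Step~1 is not a proof.

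Two smaller points about Steps~2--3. First, the paper's Mansfield lemma (Lemma~\ref{lem:proj_Tree}) does not bound the cardinality of the scattered part; it shows instead that if there is no perfect subset then $p[T]^V\subseteq N$, and then $\lvert X\rvert>(2^\lambda)^N\geq\lvert N\cap C(\vec{\lambda})\rvert^V$ gives the contradiction. This is cleaner than your count because it never requires bounding $\lvert T\rvert$, so the paper can keep $\theta=(\lambda^+)^V$ throughout; your version forces you to shrink the certificate bound to $(\lambda^+)^{M^j_\omega}$ to get $\lvert T\rvert^N\leq(2^\lambda)^N$. Second, the paper's derivative prunes a node when, for some $n$, there are fewer than $\lambda_n$ first-coordinate extensions at \emph{every} deeper level; surviving nodes then directly supply the $\lambda_n$-branching needed for the fusion. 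Your ``wide $=$ more than $\lambda$ first coordinates'' criterion is less immediately usable: the $\mu^\omega<\lambda$ argument only tells you that the projected first-coordinate tree has a $\mu$-splitting node somewhere, which need not sit under a single node of $T^\infty$. The fusion still goes through, but your formulation of the splitting claim (``a wide extension in $T^\infty$ with at least $\mu$ immediate successors in $T^\infty$'') is not what that argument actually delivers.
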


 The proof of the above theorem closely follows the proof of Solovay's classical result showing that every $\mathbf{\Sigma}^1_2$-set of reals has the perfect set property if $\omega_1$ is inaccessible to the reals (see {\cite[Theorem 14.10]{MR1994835}}). The key ingredient that  makes this adaptation possible is an absoluteness theorem proven by Laver in \cite{La97}. 
 We start this argument by obtaining tree representations for the sets in the given definability class. 
 
  Given non-empty sets $a_0,\ldots,a_k$, a subset $T$ of ${}^{{<}\omega}a_0\times\ldots\times{}^{{<}\omega}a_k$ is a \emph{(descriptive) tree} if the following statements hold for all elements $\langle t_0,\ldots,t_k\rangle$ of $T$: 
  \begin{itemize}
   \item $\dom(t_0)=\ldots=\dom(t_k)$. 
   
   \item If $\ell\in\dom(t_k)$, then $\langle t_0\restriction\ell,\ldots,t_k\restriction\ell\rangle\in T$. 
  \end{itemize}
  In addition, if $T\subseteq{}^{{<}\omega}a_0\times\ldots\times{}^{{<}\omega}a_k$ is a tree, then we let $[T]$ denote the set of all tuples $\langle x_0,\ldots,x_k\rangle$ in ${}^\omega a_0\times\ldots\times{}^\omega a_k$ with the property that $\langle x_0\restriction\ell,\ldots,x_k\restriction\ell\rangle\in T$ holds for all $\ell<\omega$. 
  Finally, for every tree $T\subseteq{}^{{<}\omega}a_0\times\ldots\times{}^{{<}\omega}a_{k+1}$, we define $$p[T] ~ = ~ \Set{\langle x_0,\ldots,x_k\rangle\in{}^\omega a_0\times\ldots\times{}^\omega a_k}{\exists x_{k+1}\in{}^\omega a_{k+1} ~ \langle x_0,\ldots,x_{k+1}\rangle\in[T]}.$$
 As outlined in {\cite[Section 1]{La97}}, for singular strong limit cardinals $\lambda$ and $0<k<\omega$, there is a direct correspondence between subsets of $V_{\lambda+1}^k$ that are $\mathbf{\Sigma}^1_1$-definable over $V_\lambda$ and sets of the form $p[T]$ for trees $T\subseteq({}^{{<}\omega}V_\lambda)^{k+1}$. 
 Several key arguments in this section rely on the absoluteness properties of this correspondence that can be isolated from the arguments in {\cite[Section 1]{La97}}:

 \begin{lem}\label{lemma:AbsoSigma11}
  For every $\Sigma_1^1$-formula $\varphi(w_0,\ldots,w_{k+2})$ in the language of set theory with free second-order variables $w_0,\ldots,w_{k+2}$, there is a first-order formula $\psi(v_0,\ldots,v_{k+1})$ in  the language of set theory expanded by two unary relation symbols with free variables $v_0,\ldots,v_{k+1}$ such that $\ZFC$ proves that for every strictly increasing sequence $\vec{\lambda}=\seq{\lambda_n}{n<\omega}$ of strong limit cardinals  with supremum $\lambda$ and every $B\subseteq V_\lambda$, the set 
  \begin{equation}\label{equation:AbsoSigma11-1}
   T_{\varphi,B,\vec{\lambda}} ~ = ~ \Set{\langle t_0,\ldots,t_{k+1}\rangle\in({}^{{<}\omega}V_\lambda)^{k+2}}{\langle V_\lambda,\in,B,\vec{\lambda}\rangle\models\psi(t_0,\ldots,t_{k+1})}
  \end{equation}
   is a tree and the  following statements hold: 
  \begin{enumerate-(i)}
   \item\label{item:AbsoSigma11-2} $T_{\varphi,B,\vec{\lambda}}\cap({}^n V_\lambda)^{k+2}\in V_\lambda$ for all $n<\omega$. 
   
   \item\label{item:AbsoSigma11-3} If $\langle x_0,\ldots,x_k\rangle\in p[T_{\varphi,B,\vec{\lambda}}]$, $i\leq k$ and $m<n<\omega$, then $x_i(m)=x_i(n)\cap V_{\lambda_m}$. 
         
   \item\label{item:AbsoSigma11-4} The following statements are equivalent for all $A_0,\ldots,A_k\subseteq V_\lambda$: 
    \begin{enumerate}
      \item $\langle V_\lambda,\in\rangle\models\varphi(A_0,\ldots,A_k,B,\vec{\lambda})$. 
      
      \item There is $\langle x_0,\ldots,x_k\rangle\in p[T_{\varphi,B,\vec{\lambda}}]$ with $x_i(n)=A_i\cap V_{\lambda_n}$ for all $i\leq k$ and $n<\omega$. \qed
    \end{enumerate}
  \end{enumerate-(i)}
 \end{lem}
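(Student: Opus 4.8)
The plan is to establish the $\lambda$-analogue of the classical representation of $\Sigma^1_1$-sets as projections of trees, following the construction sketched in \cite[Section~1]{La97}. Write the given $\Sigma^1_1$-formula as $\varphi(w_0,\dots,w_{k+2})\equiv\exists Z\,\theta(w_0,\dots,w_{k+2},Z)$ with $\theta$ first-order. The tree $T_{\varphi,B,\vec\lambda}$ will live on $({}^{{<}\omega}V_\lambda)^{k+2}$, where coordinates $0,\dots,k$ are reserved for coding the restrictions $A_i\cap V_{\lambda_n}$, so that item \ref{item:AbsoSigma11-3} is built in and item \ref{item:AbsoSigma11-4} comes out in the stated form, while the last coordinate carries, level by level, a \emph{certificate} that $\langle V_\lambda,\in,\vec A,B,\vec\lambda\rangle\models\exists Z\,\theta$. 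Since the formula $\psi$ will only refer to $B$ and $\vec\lambda$ through the two distinguished predicates of the structure $\langle V_\lambda,\in,B,\vec\lambda\rangle$, it will depend on $\varphi$ alone, as required.

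The shape of the certificate is dictated by the following observation: as $\lambda=\sup_{n<\omega}\lambda_n$ is a supremum of strong limit cardinals, it is itself a singular strong limit of countable cofinality, so $V_\lambda=\bigcup_{n<\omega}V_{\lambda_n}=H_\lambda$ and, for every $\mu<\lambda$, both $|V_\mu|$ and $2^{|V_\mu|}$ are below $\lambda$. Using Löwenheim–Skolem together with the reflection theorem, one shows that for subsets $\vec A,B,Z$ of $V_\lambda$ the statement $\langle V_\lambda,\in,\vec A,B,\vec\lambda,Z\rangle\models\theta$ is equivalent to the existence of an $\omega$-indexed coherent system of transitive collapses of elementary substructures of $\langle V_\lambda,\in,\vec A,B,\vec\lambda,Z\rangle$ — the $n$-th of which contains $V_{\lambda_n}$ and has size $\lambda_n$, hence rank below $\lambda_n^+$ — such that the bottom member already satisfies $\theta$ and the direct limit of the system collapses back onto $\langle V_\lambda,\in,\vec A,B,\vec\lambda,Z\rangle$; the restrictions of the distinguished predicates to the sets $V_{\lambda_n}$ then recover $\vec A\cap V_{\lambda_n}$, $B\cap V_{\lambda_n}$, $\vec\lambda$ and $Z\cap V_{\lambda_n}$. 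The formula $\psi(t_0,\dots,t_{k+1})$ then asserts that $t_0,\dots,t_{k+1}$ have a common domain, that $t_0,\dots,t_k$ are coherent in the sense of item \ref{item:AbsoSigma11-3} with $t_i(m)\subseteq V_{\lambda_m}$, and that $t_{k+1}$ codes a finite initial segment of such a system: each transitive collapse appearing at stage $m$ has rank below $\lambda_m^+$ and contains $V_{\lambda_m}$, the connecting maps are elementary and commute, the predicate restrictions cohere and agree on $V_{\lambda_m}$ with $B$, with $\vec\lambda$ and with $t_0(m),\dots,t_k(m)$ (using the ambient predicates for $B$ and $\vec\lambda$), and the bottom member satisfies $\theta$. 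Thus $T_{\varphi,B,\vec\lambda}$ is the tree of attempts to build such a system together with the associated $A_i$-restrictions.

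It remains to verify the four conclusions. That $T_{\varphi,B,\vec\lambda}$ is a tree and that item \ref{item:AbsoSigma11-3} holds are immediate from the domain and coherence clauses of $\psi$. For item \ref{item:AbsoSigma11-2}, coherence pins down $t_i\restriction n$ ($i\le k$) from the single set $t_i(n-1)\subseteq V_{\lambda_{n-1}}$, of which there are fewer than $\lambda$ because $\lambda$ is a strong limit, while the transitive collapses of rank below $\lambda_{n-1}^+$, together with the bounded-rank connecting maps and predicate restrictions attached to them, again range over a set of fewer than $\lambda$ sets of rank bounded below $\lambda$; hence the level-$n$ part of $T_{\varphi,B,\vec\lambda}$ is a set of size less than $\lambda$ all of whose elements have bounded rank, so it lies in $V_\lambda$. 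For item \ref{item:AbsoSigma11-4}: if $\langle V_\lambda,\in\rangle\models\varphi(\vec A,B,\vec\lambda)$, fix a witness $Z$ and the resulting coherent system, and read off a branch with $x_i(n)=A_i\cap V_{\lambda_n}$; conversely, a branch of $p[T_{\varphi,B,\vec\lambda}]$ yields in its last coordinate a coherent system whose direct limit collapses onto $\langle V_\lambda,\in,\vec A,B,\vec\lambda,Z\rangle$ with $Z=\bigcup_n Z^n$, and since the bottom member satisfies $\theta$, the elementary chain theorem gives $\langle V_\lambda,\in\rangle\models\theta(\vec A,B,\vec\lambda,Z)$, hence $\langle V_\lambda,\in\rangle\models\varphi(\vec A,B,\vec\lambda)$. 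The point genuinely requiring care — and the one extracted from \cite[Section~1]{La97} rather than reproduced routinely — is the tension between item \ref{item:AbsoSigma11-2}, which forces each stage of the certificate to be a small object, and the need for a cofinal branch to certify the global first-order statement $\langle V_\lambda,\in,\dots\rangle\models\theta$: this is exactly why one works with transitive collapses rather than with the substructures themselves, why the coherence has to be arranged so that the direct limit is forced to collapse onto $\langle V_\lambda,\in,\dots\rangle$ and not onto a proper end-extension (so that no extraneous witness for $\theta$ is introduced), and why witnesses demanded inside $\theta$ may only surface at later levels of the tree. Verifying that this packaging can be carried out by a single first-order formula $\psi$, uniformly in $\vec\lambda$ and $B$, is the one non-routine ingredient.
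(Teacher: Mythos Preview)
The paper does not give its own proof of this lemma: it is stated with a reference to \cite[Section~1]{La97} and closed with a \qed. Your sketch is precisely the construction the paper is citing, and it is correct in outline; there is nothing to compare.

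One point worth making explicit, since you flag it yourself as the non-routine step: to guarantee that a branch through $T_{\varphi,B,\vec\lambda}$ produces a direct limit that is exactly $\langle V_\lambda,\in,\dots\rangle$ rather than an ill-founded or proper end-extension, the finite-stage conditions in $\psi$ must force not only $V_{\lambda_m}\subseteq N_m$ and $\pi_{m,m+1}\restriction V_{\lambda_m}=\mathrm{id}$, but also $\pi_{m,m+1}[N_m]\subseteq V_{\lambda_{m+1}}$. With this clause, every element of the direct limit is eventually moved into some $V_{\lambda_n}$ and then fixed, so the limit is literally $\bigcup_n V_{\lambda_n}=V_\lambda$ and well-foundedness is automatic. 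In the forward direction this clause is matched by choosing the elementary substructures $M_m\prec\langle V_\lambda,\in,\dots\rangle$ so that $M_m\subseteq V_{\lambda_{m+1}}$ as well as $V_{\lambda_m}\subseteq M_m$; this is where one uses that the $\lambda_n$ are strong limits (so $\lvert V_{\lambda_m}\rvert<\lambda_{m+1}$) together with a Skolem-hull argument inside $V_\lambda$ that keeps the hull bounded in rank. Once this bookkeeping is in place, your verification of items \ref{item:AbsoSigma11-2}--\ref{item:AbsoSigma11-4} goes through exactly as you describe.
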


 The above lemma provides a setting in which a converse of Proposition \ref{prop:translate} holds:

 \begin{cor}\label{corollary:TranslateBack}
  For every $\Sigma^1_2$-formula $\psi(w_0,\ldots,w_{k-1})$ in the language of set theory with free second-order parameters $w_0,\ldots,w_{k-1}$, there exists a $\Sigma_1$-formula  $\varphi(v_0,\ldots,v_k)$ in the language of set theory such that $\ZFC$ proves that $$\varphi(A_0,\ldots,A_{k-1},V_\lambda,\vec{\lambda}) ~ \Longleftrightarrow ~ \langle V_\lambda,\in\rangle\models\psi(A_0,\ldots,A_{k-1})$$ holds for every strictly increasing sequence $\vec{\lambda}=\seq{\lambda_n}{n<\omega}$ of strong limit cardinals  with supremum $\lambda$ and  all $A_0,\ldots,A_{k-1}\in V_{\lambda+1}$.    \qed
 \end{cor}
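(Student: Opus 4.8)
The plan is to transport to the present higher-cardinal context the classical reduction of $\mathbf{\Sigma}^1_2$-truth to a $\Sigma_1$-statement: a $\Pi^1_1$-predicate asserts the well-foundedness of a suitable tree, and well-foundedness of a \emph{set}-sized tree is itself a $\Sigma_1$-property, being equivalent to the existence of a ranking function into the ordinals. So, first I would bring the given $\Sigma^1_2$-formula into the normal form $\psi(w_0,\ldots,w_{k-1})\equiv\exists X\,\forall Y\,\theta(w_0,\ldots,w_{k-1},X,Y)$ with $\theta$ first-order, collapsing blocks of like second-order quantifiers by a pairing function definable over $V_\lambda$. Setting $\eta(w_0,\ldots,w_{k-1},w_k):\equiv\exists Y\,\neg\theta(w_0,\ldots,w_{k-1},w_k,Y)$, a $\Sigma^1_1$-formula, and trivially adjoining two unused free second-order variables so that it matches the syntactic template of Lemma \ref{lemma:AbsoSigma11}, that lemma produces a fixed first-order formula $\psi^+$ in the language of set theory with two extra unary relation symbols such that, whenever $\vec{\lambda}=\seq{\lambda_n}{n<\omega}$ is a strictly increasing sequence of strong limit cardinals with supremum $\lambda$, the set
\[
 T ~ = ~ \Set{\langle t_0,\ldots,t_{k+1}\rangle\in({}^{{<}\omega}V_\lambda)^{k+2}}{\langle V_\lambda,\in,\emptyset,\vec{\lambda}\rangle\models\psi^+(t_0,\ldots,t_{k+1})}
\]
is a tree all of whose finite levels belong to $V_\lambda$ (clause \ref{item:AbsoSigma11-2}) and, by clause \ref{item:AbsoSigma11-4}, satisfies
\[
 \langle V_\lambda,\in\rangle\models\eta(A_0,\ldots,A_{k-1},X)
 ~ \Longleftrightarrow ~
 \exists\langle x_0,\ldots,x_k\rangle\in p[T]~\forall n<\omega~\big(x_i(n)=A_i\cap V_{\lambda_n}\text{ for }i<k,~x_k(n)=X\cap V_{\lambda_n}\big)
\]
for all $A_0,\ldots,A_{k-1},X\subseteq V_\lambda$.

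Next, writing $\vec{A}$ for $\langle A_0,\ldots,A_{k-1}\rangle$ with all $A_i\in V_{\lambda+1}$ and fixing $X\subseteq V_\lambda$, I would pass to the section tree
\[
 T^*_{X,\vec{A}} ~ = ~ \Set{s\in{}^{{<}\omega}V_\lambda}{\big\langle\seq{A_0\cap V_{\lambda_\ell}}{\ell\in\dom(s)},\ldots,\seq{A_{k-1}\cap V_{\lambda_\ell}}{\ell\in\dom(s)},\seq{X\cap V_{\lambda_\ell}}{\ell\in\dom(s)},s\big\rangle\in T}.
\]
Since $T$ is a tree and truncating a node of $T$ yields a node of $T$, the set $T^*_{X,\vec{A}}$ is a tree; unravelling the definitions, it has an infinite branch if and only if there is $\langle x_0,\ldots,x_k\rangle\in p[T]$ with $x_i(n)=A_i\cap V_{\lambda_n}$ for $i<k$ and $x_k(n)=X\cap V_{\lambda_n}$ for all $n<\omega$, which by the displayed equivalence is equivalent to $\langle V_\lambda,\in\rangle\models\eta(\vec{A},X)$, i.e.\ to the failure of $\langle V_\lambda,\in\rangle\models\forall Y\,\theta(\vec{A},X,Y)$. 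Hence $\langle V_\lambda,\in\rangle\models\forall Y\,\theta(\vec{A},X,Y)$ holds if and only if $T^*_{X,\vec{A}}$ is well-founded, and therefore $\langle V_\lambda,\in\rangle\models\psi(\vec{A})$ holds if and only if there is $X\subseteq V_\lambda$ with $T^*_{X,\vec{A}}$ well-founded.

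It remains to check --- and this is the one genuinely delicate point --- that ``there is $X\subseteq V_\lambda$ with $T^*_{X,\vec{A}}$ well-founded'' is expressed by a $\Sigma_1$-formula in the parameters $A_0,\ldots,A_{k-1}$, $V_\lambda$ and $\vec{\lambda}$, i.e.\ that passing from the $\Pi^1_1$-matrix $\forall Y\,\theta$ to the well-foundedness of $T^*_{X,\vec{A}}$ does not reintroduce an unbounded universal quantifier. By clause \ref{item:AbsoSigma11-2} we have $T\in V_{\lambda+1}$, so $T^*_{X,\vec{A}}\subseteq{}^{{<}\omega}V_\lambda\subseteq V_\lambda$ is a genuine set; and, since the satisfaction relation of the set structure $\langle V_\lambda,\in,\emptyset,\vec{\lambda}\rangle$ for the \emph{fixed} formula $\psi^+$ is expressible by a $\Delta_0$-formula with $V_\lambda$ and $\vec{\lambda}$ as parameters, while $n\mapsto V_{\lambda_n}$ and the formation of the relevant finite tuples are absolute, the relation ``$s\in T^*_{X,\vec{A}}$'' is $\Delta_0$ in $s$, $X$, $A_0,\ldots,A_{k-1}$, $V_\lambda$ and $\vec{\lambda}$. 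By the standard rank-function argument, ``$T^*_{X,\vec{A}}$ is well-founded'' is then equivalent to the $\Sigma_1$-assertion that there exist an ordinal $\gamma$ and a function $\rho$ with domain $T^*_{X,\vec{A}}$, range contained in $\gamma$, and $\rho(s)>\rho(t)$ whenever $s$ and $t$ both lie in $T^*_{X,\vec{A}}$ with $s\subsetneq t$; prefixing the quantifier ``$\exists X\subseteq V_\lambda$'' keeps the formula $\Sigma_1$ and yields the desired $\varphi$. Everything that makes this work --- the fact that the finite levels of $T$ lie in $V_\lambda$ and the first-order definability of $\psi^+$ over $\langle V_\lambda,\in,\emptyset,\vec{\lambda}\rangle$ --- is provided by Lemma \ref{lemma:AbsoSigma11}; the remaining ingredients (the normal-form reduction, the tree representation of a $\Sigma^1_1$-set, and the section-tree bookkeeping) are routine once the hypotheses of the corollary place us in the setting of that lemma.
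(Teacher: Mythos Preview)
Your argument is correct and is essentially the intended proof: the paper marks the corollary with a bare \qed\ immediately after Lemma~\ref{lemma:AbsoSigma11}, signalling that one applies that lemma to the $\Sigma^1_1$-matrix $\eta\equiv\exists Y\,\neg\theta$, passes to the section tree over a fixed $X\subseteq V_\lambda$, and observes that well-foundedness of this set-sized tree is a $\Sigma_1$-assertion (via a ranking function). Your write-up simply spells out these details; note in particular that your bounded-quantifier analysis goes through because ${}^{{<}\omega}V_\lambda\subseteq V_\lambda$ (finite sequences of elements of $V_\lambda$ have rank below $\lambda$), so the universal quantifiers needed to pin down $T^*_{X,\vec{A}}$ can be bounded by the parameter $V_\lambda$.
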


 Following {\cite[Section 1]{La97}}, we now generalize the concept of \emph{Shoenfield trees} ({i.e.,} tree representations for $\mathbf{\Sigma}^1_2$-sets of real numbers) to higher cardinals of countable cofinality. 
  Given a tree $T\subseteq{}^{{<}\omega}a_0\times\ldots\times{}^{{<}\omega}a_k$, $i<k$ and $\langle s_0,\ldots,s_i\rangle\in {}^{{<}\omega}a_0\times\ldots\times{}^{{<}\omega}a_i$ with $\dom(s_0)=\ldots=\dom(s_i)$, we define $T^{\langle s_0,\ldots,s_i\rangle}$ to be the set of all tuples  $\langle t_{i+1},\ldots,t_k\rangle$ in ${}^{{<}\omega}a_{i+1}\times\ldots\times{}^{{<}\omega}a_k$ with the property that $$\dom(t_{i+1}) ~ = ~ \ldots ~ = ~ \dom(t_k) ~ \leq ~ \dom(s_0)$$ and $$\langle s_0\restriction\dom(t_k),\ldots, s_{i}\restriction\dom(t_k),t_{i+1},\ldots,t_k\rangle\in T.$$ 
  Note that $$T^{\langle s_0\restriction\ell,\ldots,s_i\restriction\ell\rangle} ~ = ~ T^{\langle s_0,\ldots,s_i\rangle}\cap({}^{{\leq}\ell}a_{i+1}\times\ldots\times{}^{{\leq}\ell}a_k)$$ holds for all $\ell\in\dom(s_0)$. 
  In addition, for every ordinal $\theta$, we let $R_{T,\theta}(s_0,\ldots,s_i)$ denote the set of functions $$\map{r}{T^{\langle s_0,\ldots,s_i\rangle}}{\theta}$$ satisfying $$r(\langle t_{i+1},\ldots,t_k\rangle) ~ < ~ r(\langle t_{i+1}\restriction\ell,\ldots,t_k\restriction\ell\rangle)$$ for all $\langle t_{i+1},\ldots,t_k\rangle\in T^{\langle s_0,\ldots,s_i\rangle}$ and $\ell<\dom(t_k)$. 
 It is then easy to see that  $r\restriction T^{\langle s_0\restriction\ell,\ldots,s_i\restriction\ell\rangle}$ is an element of $R_{T,\theta}(s_0\restriction\ell,\ldots,s_i\restriction\ell)$ for all $r\in R_{T,\theta}(s_0,\ldots,s_i)$ and $\ell<\dom(s_0)$.

 Now, let $\lambda$ be an infinite ordinal, let  $T\subseteq({}^{{<}\omega}V_\lambda)^{k+3}$ be a tree and let $\theta>\lambda$ be an ordinal. 
 We then define $S_{T,\theta}$ to be the subset of $({}^{{<}\omega}V_{\theta+\omega})^{k+2}$ consisting of all tuples $\langle s_0,\ldots,s_k,t\rangle$ such that the following statements hold: 
  \begin{itemize}
   \item $s_0,\ldots,s_k\in{}^{{<}\omega}V_\lambda$. 
   
   \item $\dom(s_0)=\ldots=\dom(s_k)=\dom(t)$. 
   
   \item There exists $s_{k+1}\in{}^{\dom(s_0)}V_\lambda$ and $r\in R_{T,\theta}(s_0,\ldots,s_{k+1})$ such that  
    \begin{equation}\label{equation:CodedLastCoordinate}
     t(\ell) ~ = ~ \langle s_{k+1}\restriction(\ell+1), ~ r\restriction T^{\langle s_0\restriction(\ell+1),\ldots,s_{k+1}\restriction(\ell+1)}\rangle
    \end{equation}
     holds for all $\ell\in\dom(t)$. 
  \end{itemize}
  It is then easy to check that  $S_{T,\theta}$ is a tree. 
 The following lemma from {\cite[Section 1]{La97}} shows how these constructions yield tree representations of $\mathbf{\Sigma}^1_2$-subsets of $V_{\lambda+1}$:

 \begin{lem}\label{lemma:Sigma12TreeRepresentations}
  Let $\varphi(w_0,\ldots,w_{k+3})$ be a $\Sigma_1^1$-formula  in the language of set theory with free second-order variables $w_0,\ldots,w_{k+3}$. 
  Then the following statements are equivalent for every strictly increasing sequence of inaccessible cardinals $\vec{\lambda}=\seq{\lambda_n}{n<\omega}$ with supremum $\lambda$, every limit ordinal $\theta\geq\lambda^+$ and all $A_0,\ldots,A_k,B\subseteq V_\lambda$:  
     \begin{enumerate-(i)}
      \item $\langle V_\lambda,\in\rangle\models\exists C ~ \neg\varphi(A_0,\ldots,A_k,B,C,\vec{\lambda})$. 
      
      \item There is $\langle x_0,\ldots,x_k\rangle\in p[S_{T_{\varphi,B,\vec{\lambda}},\theta}]$ with $x_i(n)=A_i\cap V_{\lambda_n}$ for all $i\leq k$ and $n<\omega$. \qed
    \end{enumerate-(i)}
 \end{lem}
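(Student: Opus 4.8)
The plan is to mimic, essentially line by line, the classical Shoenfield-tree argument showing that every $\mathbf{\Sigma}^1_2$-set of reals is the projection of a tree, in the form adapted to higher cardinals of countable cofinality in {\cite[Section 1]{La97}}; Lemma~\ref{lemma:AbsoSigma11} is exactly what makes this argument go through unchanged at $\lambda$. Fix $\vec\lambda$, $\theta$ and $A_0,\dots,A_k,B$ as in the statement, write $T=T_{\varphi,B,\vec\lambda}$, and for $i\leq k$ let $x_i\in{}^\omega V_\lambda$ be the sequence $n\mapsto A_i\cap V_{\lambda_n}$. The first step is the following reformulation: for $C\subseteq V_\lambda$, letting $x_{k+1}\in{}^\omega V_\lambda$ be $n\mapsto C\cap V_{\lambda_n}$, an application of Lemma~\ref{lemma:AbsoSigma11}\ref{item:AbsoSigma11-4} (reading $C$ as one further ``variable'' second-order parameter) shows that $\langle V_\lambda,\in\rangle\models\neg\varphi(A_0,\dots,A_k,B,C,\vec\lambda)$ holds if and only if $\langle x_0,\dots,x_{k+1}\rangle\notin p[T]$, that is, if and only if the tree of ``witness attempts''
\[ U(x_0,\dots,x_{k+1}) ~ = ~ \Set{u\in{}^{{<}\omega}V_\lambda}{\langle x_0\restriction\dom(u),\dots,x_{k+1}\restriction\dom(u),u\rangle\in T} \]
is well-founded. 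Since, by Lemma~\ref{lemma:AbsoSigma11}\ref{item:AbsoSigma11-2} and the inaccessibility of the cardinals $\lambda_n$, each level of this tree lies in $V_\lambda$, the tree has cardinality at most $\lambda$, and hence --- this is where the hypothesis $\theta\geq\lambda^+$ is used --- it is well-founded if and only if it carries a rank function into $\theta$. The tree $S_{T,\theta}$ is built precisely so that its branches lying over a fixed $\langle x_0,\dots,x_k\rangle$ encode a choice of such a $C$ (equivalently, of $x_{k+1}$) together with a rank function on the associated tree $U(x_0,\dots,x_{k+1})$, cut into finite approximations level by level.

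Granting this, both directions are routine. For the implication from~(i) to~(ii) I would fix a witness $C$ for $\langle V_\lambda,\in\rangle\models\exists C\,\neg\varphi(A_0,\dots,A_k,B,C,\vec\lambda)$, take $x_{k+1}$ and a rank function $\map{r}{U(x_0,\dots,x_{k+1})}{\theta}$ as above, and set
\[ t(\ell) ~ = ~ \langle x_{k+1}\restriction(\ell+1), ~ r\restriction T^{\langle x_0\restriction(\ell+1),\dots,x_{k+1}\restriction(\ell+1)\rangle}\rangle \]
for all $\ell<\omega$, obtaining $t\in{}^\omega V_{\theta+\omega}$; since a restriction of a rank function is again a rank function, unwinding the definition of $S_{T,\theta}$ yields $\langle x_0\restriction n,\dots,x_k\restriction n,t\restriction n\rangle\in S_{T,\theta}$ for every $n<\omega$, and therefore $\langle x_0,\dots,x_k\rangle\in p[S_{T,\theta}]$. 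For the converse I would start from a branch $\langle x_0,\dots,x_k,t\rangle$ of $S_{T,\theta}$ with $x_i(n)=A_i\cap V_{\lambda_n}$, read off the first coordinates of the values $t(\ell)$ to recover a single sequence $x_{k+1}\in{}^\omega V_\lambda$ --- which, by the way $T=T_{\varphi,B,\vec\lambda}$ is defined, is a coherent approximation sequence, so that $C:=\bigcup_{n<\omega}x_{k+1}(n)$ satisfies $x_{k+1}(n)=C\cap V_{\lambda_n}$ for all $n$ --- and read off the second coordinates to recover a rank function on $U(x_0,\dots,x_{k+1})$. That tree is then well-founded, so by the reformulation above $\langle V_\lambda,\in\rangle\models\neg\varphi(A_0,\dots,A_k,B,C,\vec\lambda)$, which gives~(i).

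The step I expect to require the most care --- though it is purely combinatorial and completely parallel to the classical situation --- is the bookkeeping hidden in the second paragraph: verifying that the finite restrictions of $r$ genuinely witness membership in $S_{T,\theta}$ at each level, and, conversely, that the data extracted from a branch of $S_{T,\theta}$ reassembles both into a single rank function on the full tree $U(x_0,\dots,x_{k+1})$ and into an honest subset $C$ of $V_\lambda$, that is, that the level-by-level choices of first coordinates cohere. The only genuinely set-theoretic ingredient is the equivalence between well-foundedness of $U(x_0,\dots,x_{k+1})$ and the existence of a rank function into $\theta$, which rests on the levels of that tree being sets in $V_\lambda$ --- inaccessibility of the $\lambda_n$ together with Lemma~\ref{lemma:AbsoSigma11}\ref{item:AbsoSigma11-2} --- and on $\theta\geq\lambda^+$.
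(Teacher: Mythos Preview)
The paper gives no proof here: the lemma carries a \qed\ and is attributed to {\cite[Section~1]{La97}}. Your outline is precisely the classical Shoenfield argument transported to $\lambda$, which is what Laver does, so the overall plan is correct and matches what the paper cites.

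There is one step where your justification is too thin. In the direction (ii)$\Rightarrow$(i) you assert that the recovered sequence $x_{k+1}$ is coherent ``by the way $T=T_{\varphi,B,\vec\lambda}$ is defined'', and then set $C=\bigcup_{n<\omega} x_{k+1}(n)$. But Lemma~\ref{lemma:AbsoSigma11}\ref{item:AbsoSigma11-3} only guarantees coherence for tuples lying in $p[T]$, whereas here you are precisely in the complementary case $\langle x_0,\dots,x_{k+1}\rangle\notin p[T]$; and the paper's definition of $S_{T,\theta}$ places no restriction at all on $s_{k+1}\in{}^{\dom(s_0)}V_\lambda$. So, as the definitions stand, nothing prevents an incoherent $x_{k+1}$ from being read off a branch of $S_{T,\theta}$ --- indeed, if coherence \emph{is} built into the nodes of $T$, then any incoherent choice of $x_{k+1}$ makes $U(x_0,\dots,x_{k+1})$ have bounded height and hence carry a rank function trivially, while telling you nothing about any genuine $C\subseteq V_\lambda$. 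In that case $C\cap V_{\lambda_n}\neq x_{k+1}(n)$ and the appeal to Lemma~\ref{lemma:AbsoSigma11}\ref{item:AbsoSigma11-4} does not go through. The honest fix is to observe that Laver's definition of the higher Shoenfield tree restricts $s_{k+1}$ to coherent sequences (equivalently, to initial segments that extend to nodes of $T$); once that is made explicit the argument you wrote is complete, but the sentence as it stands does not justify the claim from the statements available in the paper.
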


Still following Laver's arguments, we now show that the structural properties of higher Shoenfield trees can be fruitfully combined with the combinatorics of I2-embeddings. The proof of the next lemma is a reformulation of the proof of {\cite[Theorem 1.4]{La97}}.

 \begin{lem}\label{lemma:I2Sigma12}
  Let $\varphi(w_0,\ldots,w_{k+3})$ be a $\Sigma_1^1$-formula  in the language of set theory with free second-order variables $w_0,\ldots,w_{k+3}$,  
  let $\map{j}{V}{M}$ be an I2-embedding with critical sequence $\vec{\lambda}=\seq{\lambda_n}{n<\omega}$ and let $N$ be an inner model of $\ZFC$ with $M^j_\omega\cup\{\vec{\lambda}\}\subseteq N$. Set $\lambda=\sup_{n<\omega}\lambda_n$ and $\theta=(\lambda^+)^V$. Fix $B\in V_{\lambda+1}^N$ and define $T=T_{\varphi,B,\vec{\lambda}}^V$. Then the following statements hold:
  \begin{enumerate-(i)}
   \item\label{item:AbsoShoen1} $T=T_{\varphi,B,\vec{\lambda}}^N$ and $S_{T,\theta}^N\subseteq S_{T,\theta}^V$. 
   
   \item\label{item:AbsoShoen2} There is an inclusion-preserving embedding $\map{\Lambda}{S_{T,\theta}^V}{S_{T,\theta}^N}$ with the property that for all $\langle s_0,\ldots,s_k,t\rangle\in S_{T,\theta}^V$, there exists $u$ with $$\Lambda(\langle s_0,\ldots,s_k,t\rangle) ~ = ~ \langle s_0,\ldots,s_k,u\rangle.$$ 
   
   \item\label{item:AbsoShoen3} $p[S_{T,\theta}^V]^V=p[S_{T,\theta}^N]^V$. 
   
   \item\label{item:AbsoShoen4} $p[S_{T,\theta}^V]^V\cap N=p[S_{T,\theta}^N]^N$. 
  \end{enumerate-(i)}
 \end{lem}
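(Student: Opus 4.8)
The plan is to prove \ref{item:AbsoShoen1} and \ref{item:AbsoShoen2} directly and then to read off \ref{item:AbsoShoen3} and \ref{item:AbsoShoen4}. For \ref{item:AbsoShoen1}, the key preliminary observations are that $V_\lambda^N=V_\lambda$ (because $V_\lambda\subseteq M^j_\omega\subseteq N$ and $N$ is transitive) and that each $\lambda_n$ remains inaccessible in $N$ by standard downward absoluteness, so that Lemmas~\ref{lemma:AbsoSigma11} and~\ref{lemma:Sigma12TreeRepresentations} are available inside $N$. Since $T_{\varphi,B,\vec{\lambda}}$ is by definition obtained by first-order satisfaction over the structure $\langle V_\lambda,\in,B,\vec{\lambda}\rangle$, which is an element of $N$ as $B\in V_{\lambda+1}^N$ and $\vec{\lambda}\in N$, and since the satisfaction predicate of a set-sized structure is absolute between transitive models of a sufficient fragment of $\ZFC$, we get $T=T_{\varphi,B,\vec{\lambda}}^N$. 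For the inclusion $S_{T,\theta}^N\subseteq S_{T,\theta}^V$, note that $\theta=(\lambda^+)^V$ is an ordinal of $N$ and that Lemma~\ref{lemma:AbsoSigma11}\ref{item:AbsoSigma11-2} makes each set $T^{\langle s_0,\ldots,s_{k+1}\rangle}$ attached to finite sequences an element of $V_\lambda\subseteq N$ that is computed identically in $N$ and in $V$; hence $R_{T,\theta}^N(s_0,\ldots,s_{k+1})\subseteq R_{T,\theta}^V(s_0,\ldots,s_{k+1})$, and any tuple that is witnessed in $N$ to lie in $S_{T,\theta}$ is, via the same witnesses $s_{k+1}$ and $r$, witnessed in $V$ to lie in $S_{T,\theta}$.

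The substance of the lemma is \ref{item:AbsoShoen2}, which I would establish by reworking the proof of \cite[Theorem~1.4]{La97}. One builds $\Lambda$ by recursion on the length of nodes: given a node $\langle s_0,\ldots,s_k,t\rangle\in S_{T,\theta}^V$ of length $d$, one decodes the associated witness $s_{k+1}\in{}^{d}V_\lambda$ together with the attached partial ranking, and uses the iterate embeddings $j_{n,\omega}\colon M^j_n\to M^j_\omega$ to transport the ranking information into $M^j_\omega\subseteq N$, leaving $s_0,\ldots,s_k$ (and in fact $s_{k+1}$) untouched and recording the outcome in the last coordinate. Two features of the $\omega$-iteration make this work: first, $\crit{j_{n,\omega}}=\lambda_n$, so that $j_{n,\omega}$ is the identity on all sets of rank below $\lambda_n$, which covers the bounded information stored in the first $k+1$ coordinates of a node of length $n$ (by the nestedness built into $T$ via Lemma~\ref{lemma:AbsoSigma11}, these coordinates consist of sets of such small rank); second, $V_\lambda\subseteq M^j_\omega$ while $M^j_\omega$ is small over $\lambda$ from the point of view of $V$ --- the I2-counterpart of Solovay's hypothesis that $\omega_1$ is inaccessible to the reals --- which is what allows $M^j_\omega$ to absorb the transported rankings. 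The step I expect to be the main obstacle is coherence: the rankings produced at successive lengths must fit together so that $\Lambda$ respects end-extension of nodes, which requires tracking carefully how the images under the various $j_{n,\omega}$ interlock, exactly as in \cite[Section~1]{La97}. Verifying afterwards that $\Lambda$ maps into $S_{T,\theta}^N$, is inclusion-preserving, and has the asserted form on last coordinates is then routine.

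Finally, granting \ref{item:AbsoShoen1} and \ref{item:AbsoShoen2}, the remaining two items follow by soft arguments. The inclusion $p[S_{T,\theta}^N]^V\subseteq p[S_{T,\theta}^V]^V$ is immediate from $S_{T,\theta}^N\subseteq S_{T,\theta}^V$; for the converse, given a branch of $S_{T,\theta}^V$ lying over a tuple $\langle x_0,\ldots,x_k\rangle$, the $\Lambda$-images of its initial segments form, since $\Lambda$ is inclusion-preserving, a chain whose union is a branch of $S_{T,\theta}^N$, and this branch still lies over $\langle x_0,\ldots,x_k\rangle$ because $\Lambda$ fixes the first $k+1$ coordinates; this proves \ref{item:AbsoShoen3}. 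In \ref{item:AbsoShoen4} the inclusion $\supseteq$ again follows from $S_{T,\theta}^N\subseteq S_{T,\theta}^V$, and for $\subseteq$ one takes $\langle x_0,\ldots,x_k\rangle\in p[S_{T,\theta}^V]^V\cap N$; by \ref{item:AbsoShoen3}, the subtree $P$ of $S_{T,\theta}^N$ consisting of all nodes whose first $k+1$ coordinates agree with the corresponding initial segments of $x_0,\ldots,x_k$ is ill-founded in $V$. Since $P$ is definable from $S_{T,\theta}^N\in N$ and the tuple $\langle x_0,\ldots,x_k\rangle\in N$, it is an element of $N$; as $N$ is an inner model of $\ZFC$, well-foundedness of the tree $P$ is absolute between $N$ and $V$, so $P$ is ill-founded in $N$, which is to say $\langle x_0,\ldots,x_k\rangle\in p[S_{T,\theta}^N]^N$.
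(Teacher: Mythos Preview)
Your treatment of \ref{item:AbsoShoen1}, \ref{item:AbsoShoen3}, and \ref{item:AbsoShoen4} matches the paper's proof essentially verbatim. The one place you diverge is \ref{item:AbsoShoen2}, where you over-complicate the construction and manufacture an obstacle that does not actually arise.

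The paper does not use varying embeddings $j_{n,\omega}$ indexed by the length of the node; it uses the single fixed map $j_{0,\omega}$ throughout. The only input needed from \cite[Theorem~1.4]{La97} is that for every $d\in V_\lambda$ and every function $\map{f}{d}{\Ord}$, the composition $j_{0,\omega}\circ f$ lies in $M^j_\omega$. Since each ranking $r\in R_{T,\theta}(s_0,\ldots,s_{k+1})$ has domain $T^{\langle s_0,\ldots,s_{k+1}\rangle}\in V_\lambda$ (by Lemma~\ref{lemma:AbsoSigma11}\ref{item:AbsoSigma11-2}) and $j_{0,\omega}(\theta)=\theta$, one gets $j_{0,\omega}\circ r\in R_{T,\theta}(s_0,\ldots,s_{k+1})^N$ directly. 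The map $\Lambda$ is then defined by keeping $s_0,\ldots,s_{k+1}$ fixed \emph{by fiat}, not via any critical-point argument, and replacing $r$ by $j_{0,\omega}\circ r$ in the encoding of the last coordinate. With this uniform choice, the coherence you flag as ``the main obstacle'' is automatic: if $r'=r\restriction T^{\langle s_0\restriction\ell,\ldots,s_{k+1}\restriction\ell\rangle}$, then $j_{0,\omega}\circ r'=(j_{0,\omega}\circ r)\restriction T^{\langle s_0\restriction\ell,\ldots,s_{k+1}\restriction\ell\rangle}$, because restriction commutes with post-composition by a fixed function. Your discussion of $\crit{j_{n,\omega}}=\lambda_n$ fixing the first $k+1$ coordinates is therefore addressing a non-issue, and no recursion on length or interlocking of different $j_{n,\omega}$ is required.
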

 
 \begin{proof}
  \ref{item:AbsoShoen1} Since $V_\lambda\cup\{B,\vec{\lambda}\}\subseteq N$, the fact that \eqref{equation:AbsoSigma11-1} holds in both $V$ and $N$ directly implies that $T=T_{\varphi,B,\vec{\lambda}}^N$. 
  In addition, if $s_0,\ldots,s_{k+1}\in {}^{{<}\omega}V_\lambda$ with $\dom(s_0)=\ldots=\dom(s_{k+1})$, then $$R_{T,\theta}(s_0,\ldots,s_{k+1})^N ~ \subseteq ~ R_{T,\theta}(s_0,\ldots,s_{k+1})^V.$$  In particular, we know that  $S_{T,\theta}^N\subseteq S_{T,\theta}^V$.

  \ref{item:AbsoShoen2} The proof of   {\cite[Theorem 1.4]{La97}} shows that for every  $d\in V_\lambda$ and every function $\map{f}{d}{\Ord}$, the function $\map{j_{0,\omega}\circ f}{d}{\Ord}$ is an element of $M^j_\omega$. 
  In particular, if  $s_0,\ldots,s_{k+1}\in {}^{{<}\omega}V_\lambda$ with $\dom(s_0)=\ldots=\dom(s_{k+1})$ and $r\in R_{T,\theta}(s_0,\ldots,s_{k+1})^V$, then the fact that $j_{0,\omega}(\theta)=\theta$ implies that $j_{0,\omega}\circ r\in R_{T,\theta}(s_0,\ldots,s_{k+1})^N$. 
  This inclusion allows us to define $\map{\Lambda}{S_{T,\theta}^V}{S_{T,\theta}^N}$ to be the unique function with the property that for all $\langle s_0,\ldots,s_k,t\rangle\in S_{T,\theta}^V$ and all $s_{k+1}\in{}^{\dom(s_0)}V_\lambda$ and $r\in R_{T,\theta}(s_0,\ldots,s_{k+1})$ such that  \eqref{equation:CodedLastCoordinate}  holds for all $\ell\in\dom(t)$, we have  $\Lambda(\langle s_0,\ldots,s_k,t\rangle) = \langle s_0,\ldots,s_k,u\rangle$, where $$u(\ell) ~ = ~ \langle s_{k+1}\restriction(\ell+1), ~ (j_{0,\omega}\circ r)\restriction T^{\langle s_0\restriction(\ell+1),\ldots,s_{k+1}\restriction(\ell+1)}\rangle$$ for all $\ell\in\dom(u)$. This definition directly ensures that $\Lambda$ is an inclusion-preserving embedding. 
  
    \ref{item:AbsoShoen3} Since  $S_{T,\theta}^N\subseteq S_{T,\theta}^V$, we know that  $p[S_{T,\theta}^N]^V\subseteq p[S_{T,\theta}^V]^V$. Pick a tuple  $\langle x_0,\ldots,x_k,y\rangle$ in $[S_{T,\theta}^V]^V$ and let $z$ be the unique element of ${}^\omega V_{\theta+\omega}$ with 
    \begin{equation}\label{equation:MoveBranch}
     \Lambda(\langle x_0\restriction n,\ldots,x_k\restriction n,y\restriction n\rangle) ~ = ~ \langle x_0\restriction n,\ldots,x_k\restriction n,z\restriction n\rangle
    \end{equation} 
    for all $n<\omega$. We then know that $\langle x_0,\ldots,x_k,z\rangle$ is an element of $[S_{T,\theta}^N]^V$. This shows that we also have $p[S_{T,\theta}^V]^V\subseteq p[S_{T,\theta}^N]^V$. 
    
      \ref{item:AbsoShoen4} First, the fact that $S_{T,\theta}^N\subseteq S_{T,\theta}^V$ directly implies that $p[S_{T,\theta}^N]^N \subseteq p[S_{T,\theta}^V]^V\cap N$. 
     Now, fix $\langle x_0,\ldots,x_k\rangle\in p[S_{T,\theta}^V]^V\cap N$ and pick $y\in V$ with $\langle x_0,\ldots,x_k,y\rangle\in [S_{T,\theta}^V]^V$.   
      Let $z$ denote the unique element of ${}^{{<}\omega}V_{\theta+\omega}$ such that \eqref{equation:MoveBranch} holds for all $n<\omega$.  
       This shows that $\langle x_0,\ldots,x_k,z\rangle\in [S_{T,\theta}^N]^V$. 
       Since the tuple $\langle x_0,\ldots,x_k\rangle$ is an element of $N$, we know that $$U ~ = ~ \Set{t\in{}^{{<}\omega}V_{\theta+\omega}}{\langle x_0\restriction\dom(t),\ldots,x_k\restriction\dom(t),t\rangle\in S_{T,\theta}^N}$$ is a tree of height $\omega$ in $N$ and $z\in[U]^V$. 
       In this situation, the fact that the ill-foundedness of $U$ is absolute between $N$ and $V$ yields an element $z^\prime$ of $[U]^N$. We then have  $\langle x_0,\ldots,x_k,z^\prime\rangle\in [S_{T,\theta}^N]^N$ and $\langle x_0,\ldots,x_k\rangle\in p[S_{T,\theta}^N]^N$.  
 \end{proof}

\begin{cor}[{\cite[Theorem 1.4]{La97}}]\label{cor:Sigma12correctness}
 Let $\varphi(w_0,\ldots,w_{n-1})$ be a $\Sigma_2^1$-formula  in the language of set theory with  free second-order variables $w_0,\ldots,w_{n-1}$. If  $\map{j}{V}{M}$ is an I2-embedding with critical sequence $\vec{\lambda}=\seq{\lambda_n}{n<\omega}$ and  $N$ is an inner model of $\ZFC$ with $M^j_\omega\cup\{\vec{\lambda}\}\subseteq N$, then the statement $$\langle V_\lambda,\in\rangle\models\varphi(A_0,\ldots,A_{n-1})$$ is absolute between $V$ and $N$ for all $A_0,\ldots,A_{n-1}\in V_{\lambda+1}^N$, where $\lambda=\sup_{n<\omega}\lambda_n$.  \qed 
\end{cor}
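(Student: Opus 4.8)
The plan is to reduce the claim, through the higher Shoenfield tree representations of Lemmas~\ref{lemma:AbsoSigma11} and~\ref{lemma:Sigma12TreeRepresentations}, to the absoluteness of projections of such trees recorded in Lemma~\ref{lemma:I2Sigma12}. Fix an I2-embedding $\map{j}{V}{M}$ with critical sequence $\vec{\lambda}=\seq{\lambda_n}{n<\omega}$, set $\lambda=\sup_{n<\omega}\lambda_n$ and $\theta=(\lambda^+)^V$, and let $N$ be an inner model of $\ZFC$ with $M^j_\omega\cup\{\vec{\lambda}\}\subseteq N$; note that $V_\lambda\subseteq M^j_\omega\subseteq N$, that $\lambda$ is a strong limit cardinal of countable cofinality in $N$, and that each $\lambda_n$ remains inaccessible in the inner model $N$. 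Given a $\Sigma^1_2$-formula $\psi(w_0,\ldots,w_{n-1})$, I would first put it into prenex normal form: after, if necessary, adjoining dummy second-order variables so as to match the shape required by Lemma~\ref{lemma:Sigma12TreeRepresentations}, there is a $\Sigma^1_1$-formula $\varphi$ such that $\ZFC$ proves that $\langle V_\lambda,\in\rangle\models\psi(A_0,\ldots,A_{n-1})$ is equivalent to $\langle V_\lambda,\in\rangle\models\exists C\,\neg\varphi(A_0,\ldots,A_{n-1},C,\vec{\lambda})$ for every strictly increasing sequence $\vec{\lambda}$ of inaccessible cardinals with supremum $\lambda$ and all $A_0,\ldots,A_{n-1}\subseteq V_\lambda$. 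Since this is a theorem of $\ZFC$ and the higher-order satisfaction predicate refers only to the (fixed) structure $\langle V_\lambda,\in\rangle$ together with its ambient power set, the equivalence holds both when the satisfaction predicate is computed in $V$ and when it is computed in $N$.

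Now fix $A_0,\ldots,A_{n-1}\in V_{\lambda+1}^N$ and distribute them in any fixed way among the ``varying'' second-order arguments $A_0,\ldots,A_k$ and the fixed parameter $B$ of Lemma~\ref{lemma:Sigma12TreeRepresentations}; the resulting parameter $B$ still lies in $V_{\lambda+1}^N$. Put $T=T_{\varphi,B,\vec{\lambda}}^V$ and let $\vec{x}=\seq{x_i}{i\leq k}$ be the canonical threading of the varying parameters, so that $x_i(m)=A_i\cap V_{\lambda_m}$ for all $i\leq k$ and $m<\omega$; since $\vec{\lambda}\in N$ and $V_\lambda\subseteq N$, the tuple $\vec{x}$ belongs to $N$. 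Applying Lemma~\ref{lemma:Sigma12TreeRepresentations} inside $V$, with the limit ordinal $\theta=(\lambda^+)^V\geq\lambda^+$, shows that $\langle V_\lambda,\in\rangle\models\exists C\,\neg\varphi(\ldots)$, computed in $V$, is equivalent to $\vec{x}\in p[S_{T,\theta}^V]^V$. By Lemma~\ref{lemma:I2Sigma12}~\ref{item:AbsoShoen1} we have $T=T_{\varphi,B,\vec{\lambda}}^N$, so that $S_{T,\theta}^N$ genuinely is the Shoenfield tree attached to $\varphi$, $B$, $\vec{\lambda}$ and $\theta$ as computed in $N$; and by Lemma~\ref{lemma:I2Sigma12}~\ref{item:AbsoShoen4} we have $p[S_{T,\theta}^V]^V\cap N=p[S_{T,\theta}^N]^N$. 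As $\vec{x}\in N$, this gives $\vec{x}\in p[S_{T,\theta}^V]^V$ if and only if $\vec{x}\in p[S_{T,\theta}^N]^N$. Finally, a second application of Lemma~\ref{lemma:Sigma12TreeRepresentations}, this time inside $N$ --- legitimate because $\vec{\lambda}$ is a strictly increasing sequence of inaccessible cardinals with supremum $\lambda$ in $N$ and because $\theta=(\lambda^+)^V$, being a cardinal of $V$ and hence of $N$, is a limit ordinal with $\theta\geq(\lambda^+)^N$ --- shows that $\vec{x}\in p[S_{T,\theta}^N]^N$ is equivalent to $\langle V_\lambda,\in\rangle\models\exists C\,\neg\varphi(\ldots)$ computed in $N$. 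Chaining these equivalences with the normal form from the first paragraph shows that $\langle V_\lambda,\in\rangle\models\psi(A_0,\ldots,A_{n-1})$ --- a statement whose truth value a priori depends on whether the second-order quantifiers range over $V_{\lambda+1}^V$ or over $V_{\lambda+1}^N$ --- has the same truth value in both cases, which is the assertion of the corollary.

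The substantive work is entirely contained in Lemma~\ref{lemma:I2Sigma12}, which repackages Laver's argument from \cite{La97}; what remains above is bookkeeping, and I do not expect a genuine obstacle. The two points that actually require attention are that the canonical threading $\vec{x}$ of the parameters lies in $N$ --- this is exactly what lets Lemma~\ref{lemma:I2Sigma12}~\ref{item:AbsoShoen4} bridge the two instances of Lemma~\ref{lemma:Sigma12TreeRepresentations}, in $V$ and in $N$ --- and the verification that $\theta=(\lambda^+)^V$ still meets the hypotheses of Lemma~\ref{lemma:Sigma12TreeRepresentations} when that lemma is applied inside $N$; the latter uses only that $N$ is an inner model of $V$, so that cardinals and inaccessibility of $V$ are inherited by $N$, and that Lemma~\ref{lemma:Sigma12TreeRepresentations} only requires $\theta$ to be a limit ordinal at least as large as the relevant successor cardinal, not equal to it.
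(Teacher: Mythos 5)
Your argument is correct and is exactly the route the paper intends: the corollary is stated with no separate proof precisely because it follows from the tree representation of Lemma \ref{lemma:Sigma12TreeRepresentations} applied in $V$ and in $N$, bridged by Lemma \ref{lemma:I2Sigma12}~\ref{item:AbsoShoen1} and \ref{item:AbsoShoen4}, which is what you do. Your added checks (that the threading $\vec{x}$ lies in $N$, that the $\lambda_n$ remain inaccessible in $N$, and that $\theta=(\lambda^+)^V$ still satisfies the hypotheses of Lemma \ref{lemma:Sigma12TreeRepresentations} inside $N$) are the right bookkeeping points and are handled correctly.
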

  
  In order to connect the above concepts with the existence of perfect subsets, we now adapt a classical result of Mansfield (see {\cite[Theorem 14.7]{MR1994835}}) to our setting:

\begin{lem}\label{lem:proj_Tree}
  Let $\vec{\lambda}=\seq{\lambda_n}{n<\omega}$ be  a strictly increasing sequence  of infinite cardinals with limit $\lambda$  and let $T\subseteq{}^{{<}\omega} a\times{}^{{<}\omega} b$ be a tree   with the property that $p[T]$ does not contain the range of a perfect embedding of ${}^\omega\lambda$ into ${}^\omega a$. 
 If $N$ is an inner model of $\ZFC$ with  $V_\lambda\cup\{T,\vec{\lambda}\}\subseteq N$, then $p[T]^V\subseteq N$.  
\end{lem}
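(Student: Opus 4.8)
The plan is to adapt the proof of Mansfield's theorem \cite[Theorem 14.7]{MR1994835}, replacing the classical Cantor--Bendixson-style derivative --- which keeps a node of a tree only when its first coordinate splits into two incompatible extensions --- by a \emph{$\vec{\lambda}$-perfect derivative} designed to produce copies of ${}^\omega\lambda$ rather than merely of Cantor space. Working inside $N$, which is legitimate since $T,\vec{\lambda}\in N$ and $V_\lambda\subseteq N$, I would define a $\subseteq$-decreasing sequence $\seq{T_\xi}{\xi\in\Ord}$ of subtrees of $T$ by $T_0=T$, $T_\eta=\bigcap_{\xi<\eta}T_\xi$ for limit $\eta$, and by taking $T_{\xi+1}$ to be the set of all $\langle s,t\rangle\in T_\xi$ such that for every $n<\omega$ there exist $\langle s_i,t_i\rangle\in T_\xi$ for $i<\lambda_n$, all extending $\langle s,t\rangle$, with $s_i$ and $s_j$ incompatible whenever $i\neq j$. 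Since the sequence is $\subseteq$-decreasing, it stabilises at some stage below $(\vert T\vert^+)^N$ at a set $T_\infty\in N$ that equals its own derivative.

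Now distinguish two cases. If $T_\infty\neq\emptyset$, then $p[T_\infty]\subseteq p[T]$ contains the range of a perfect embedding of ${}^\omega\lambda$ into ${}^\omega a$, contradicting the hypothesis: the fixed-point property provides, above every node of $T_\infty$ and for every $n$, $\lambda_n$-many pairwise first-coordinate-incompatible extensions again lying in $T_\infty$, and since $\cof{\lambda}=\omega$ a recursion that realises each intended $\lambda$-fold split as an $\omega$-cascade of $\lambda_n$-fold splits ($n<\omega$) yields a $\subseteq$-monotone map $\sigma$ from ${}^{{<}\omega}\lambda$ into $T_\infty$ that sends incompatible nodes to first-coordinate-incompatible ones and has lengths tending to infinity; writing $\sigma(\tau)=\langle s_\tau,t_\tau\rangle$, the map $x\mapsto\bigcup_{k<\omega}s_{x\restriction k}$ is then the required perfect embedding, with range inside $p[T_\infty]$. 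If instead $T_\infty=\emptyset$, I would show $p[T]^V\subseteq N$. Fix $x\in p[T]^V$ and $y$ with $\langle x,y\rangle\in[T]^V$. Since $T_\infty=\emptyset$ and $\bigcap_\xi[T_\xi]^V=[\bigcap_\xi T_\xi]^V$, there is a least $\xi$ with $\langle x,y\rangle\notin[T_\xi]^V$, and it cannot be a limit (as $[T_\eta]^V=\bigcap_{\zeta<\eta}[T_\zeta]^V$ for limit $\eta$), so $\xi=\eta+1$ and there is $n<\omega$ with $\langle x\restriction n,y\restriction n\rangle\in T_\eta\setminus T_{\eta+1}$. The failure of the defining condition inside $N$ then means that, for some $m$, the tree $W\in N$ of first coordinates of the nodes of $T_\eta$ extending $\langle x\restriction n,y\restriction n\rangle$ has no antichain of size $\lambda_m$; since every level of $W$ is an antichain, $W$ has at most $\lambda_m<\lambda$ nodes, so $N$ contains a bijection $g$ between $W$ and some ordinal $\mu<\lambda$. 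Because $\langle x,y\rangle\in[T_\eta]^V$, each $x\restriction k$ with $k\geq n$ lies in $W$, so $\seq{g(x\restriction k)}{n\leq k<\omega}$ belongs to $V_\lambda\subseteq N$; applying $g^{-1}\in N$ recovers $\seq{x\restriction k}{n\leq k<\omega}$ and hence $x$, so $x\in N$.

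The crux of the argument is the calibration of the derivative. The classical $2$-splitting derivative would only produce a copy of Cantor space in the nontrivial case, which is of no use here, since for $\lambda>2^{\aleph_0}$ a copy of ${}^\omega 2$ contains no copy of ${}^\omega\lambda$; strengthening to the $\vec{\lambda}$-indexed splitting condition repairs this. The corresponding subtlety is that one must then check that the ``small'' subtrees produced in the trivial case remain fully reconstructible inside $N$ --- which is precisely the interplay used above between the cardinality bound (at most $\lambda_m$ nodes, hence, being an element of $N$, transportable into an ordinal below $\lambda$) and the hypothesis $V_\lambda\subseteq N$. I expect this balancing of the two halves of the derivative argument to be the main obstacle; the remaining points (that the $\sigma$-construction can be carried out, that the extensions used strictly increase in length, and that the resulting map is a homeomorphism onto its image) are routine adaptations of the classical proof.
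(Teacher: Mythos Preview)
Your proposal is correct and follows essentially the same approach as the paper's proof: both iterate a $\vec{\lambda}$-calibrated Cantor--Bendixson derivative on $T$ inside $N$, argue that a nonempty fixed point would yield a perfect copy of ${}^\omega\lambda$ inside $p[T]$, and in the empty case recover each $x\in p[T]^V$ from a code in ${}^\omega\mu$ for some $\mu<\lambda$, using $V_\lambda\subseteq N$.

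The only noteworthy difference is the formulation of the derivative. The paper keeps $\langle s,t\rangle$ when for every $n$ there is a \emph{single level} $\ell>\dom(s)$ at which at least $\lambda_n$ distinct first coordinates appear above $\langle s,t\rangle$; you keep $\langle s,t\rangle$ when for every $n$ there are $\lambda_n$ extensions with pairwise \emph{incompatible} first coordinates (at possibly varying levels). Both versions work. In the nonempty case, the paper builds its scheme directly on the tree $S_{\vec{\lambda}}=\bigcup_k\prod_{i<k}\lambda_i$ (using the level-$\ell$ splitting to supply $\lambda_{\dom(u)}$ children at node $u$) and then invokes the existence of a perfect embedding ${}^\omega\lambda\hookrightarrow C(\vec{\lambda})$; your ``$\omega$-cascade'' is the same idea packaged differently---from $\lambda_n$-fold splits one obtains $\sum_n\lambda_n=\lambda$ pairwise first-coordinate-incompatible extensions inside $T_\infty$, so $\sigma$ on ${}^{<\omega}\lambda$ can be built directly. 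In the empty case, the paper bounds each level $E_\ell$ separately by $\lambda_n$ and codes $x$ via a sequence of surjections $\tau_\ell:\lambda_n\to E_\ell$ chosen in $N$; you bound the whole tree $W$ of first coordinates at once (using that each level of $W$ is an antichain, hence of size ${<}\lambda_m$, so $|W|^N\leq\lambda_m$) and code via a single bijection $g$. These are equivalent bookkeeping choices.
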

	
	\begin{proof}
	 Given a tree $S\subseteq{}^{{<}\omega} a\times{}^{{<}\omega} b$, we define $S^\prime$ to be the set of all $\langle t,u\rangle\in S$ with the property that for all $n<\omega$, there exists $\dom(t)<\ell<\omega$ such that the set $$\Set{v\in{}^\ell a}{\exists w\in{}^\ell b ~ [t\subseteq v\wedge u\subseteq w\wedge\langle v,w\rangle\in S]}$$ has cardinality at least $\lambda_n$. 
	 Then it is easy to see that for every such tree $S$, the set $S^\prime$ is again a tree with $S^\prime\subseteq S$ and, if $S$ is an element of $N$, then $S^\prime$ is also contained $N$. 
	 Now, let $\seq{T_\alpha}{\alpha\in\Ord}$ denote the unique sequence of trees  with $T_0=T$, $T_{\alpha+1}=T_\alpha^\prime$ for all $\alpha\in\Ord$ and $T_\beta=\bigcap_{\alpha<\beta}T_\alpha$ for all $\beta\in\Lim$. 
	 Then it is easy to see that $T_\alpha\in N$ holds for all $\alpha\in\Ord$. Moreover, there exists $\alpha_*\in\Ord$ with $T_{\alpha_*}=T_\beta$ for all $\alpha_*\leq\beta\in\Ord$. Set $T_*=T_{\alpha_*}$. 
	 
	 \begin{claim*}
	  $T_*=\emptyset$. 
	 \end{claim*}
	 
	 \begin{proof}[Proof of the Claim]
	 Assume, towards a contradiction, that $T_*\neq\emptyset$. 
	 Let $S_{\vec{\lambda}}$ denote the subtree of ${}^{{<}\omega}\lambda$ consisting of all $s\in{}^{{<}\omega}\lambda$ with $s(\ell)<\lambda_\ell$ for all $\ell\in\dom(s)$. 
	 We inductively construct a system $\seq{\langle s_u,t_u\rangle\in T_*}{u\in S_{\vec{\lambda}}}$ such that the following statements hold for all $u,v\in S_{\vec{\lambda}}$: 
		\begin{itemize}
			\item If $u\subsetneq v$, then  $s_u\subsetneq s_v$ and $t_u\subsetneq t_v$. 
			
			\item If $\alpha<\beta<\lambda_{\dom(u)}$, then $\dom(s_{u^\frown\langle\alpha\rangle})=\dom(s_{u^\frown\langle\beta\rangle})$ and $s_{u^\frown\langle\alpha\rangle}\neq s_{u^\frown\langle\beta\rangle}$. 
		\end{itemize}
	 First, define $s_\emptyset=t_\emptyset=\emptyset$. 
	 Now, assume that $u\in S_{\vec{\lambda}}$ and $\langle s_u,t_u\rangle\in T_*$ is already constructed. 
	 Since  $\langle s_u,t_u\rangle\in T_*^\prime=T_*$, we can find $\dom(s_u)<\ell<\omega$ and a sequence $\seq{\langle s_\xi,t_\xi\rangle\in T_*}{\xi<\lambda_{\dom(u)}}$ with the property that for all $\xi<\rho<\lambda_{\dom(u)}$, we have $\dom(s_\xi)=\dom(s_\rho)=\ell$ and $s_\xi\neq s_\rho$. 
	 Given $\xi<\lambda_{\dom(u)}$, we then define  $s_{u^\frown\langle\xi\rangle}=s_\xi$ and $t_{u^\frown\langle\xi\rangle}=t_\xi$. 
	It then directly follows that the constructed sets satisfy all required  properties. This completes the inductive construction of our system. 
	If we now define $$\Map{\iota}{C(\vec{\lambda})}{{}^\omega a}{x}{\bigcup\Set{s_{x\restriction\ell}}{\ell<\omega}},$$ then our setup ensures  that $\iota$ is a perfect embedding.   Moreover, we have $$\langle\iota(x),\bigcup\Set{t_{x\restriction i}}{i<\omega}\rangle\in [T]$$ for all $x\in C(\vec{\lambda})$ and this shows that $\ran{\iota}$ is a subset of $p[T]$. Since there exists a perfect embedding of ${}^\omega\lambda$ into $C(\vec{\lambda})$, this yields a contradiction to our assumptions on $T$. 
	 \end{proof}

	 Now, fix $\langle x,y\rangle\in [T]$. Then there is an $\alpha<\alpha_*$ with $\langle x,y\rangle\in[T_\alpha]\setminus[T_{\alpha+1}]$ and we can find $k<\omega$ with the property that $\langle x\restriction k,y\restriction k\rangle\notin T_{\alpha+1}=T_\alpha^\prime$. 
	 Hence, there is   $n<\omega$ with the property  that for all $k<\ell<\omega$, the set $$E_\ell ~ = ~ \Set{s\in{}^\ell a}{\exists t\in{}^\ell b ~ [x\restriction k\subseteq s\wedge y\restriction k\subseteq t\wedge\langle s,t\rangle\in T_\alpha]}$$ has cardinality less than $\lambda_n$. 
	 Note that $x\restriction\ell\in E_\ell$ holds   for all $k<\ell<\omega$. 
	 Moreover, since $N$ contains the  sequence $\seq{E_\ell}{k<\ell<\omega}$ and each $E_\ell$ has cardinality less than $\lambda_n$ in $N$, we can find a sequence $\seq{\map{\tau_\ell}{\lambda_n}{E_\ell}}{k<\ell<\omega}$ of surjections that is an element of $N$. 
	 If we pick $z\in{}^\omega\lambda_n$ with $\tau_\ell(z(\ell))=x\restriction\ell$ for all $k<\ell<\omega$, then the fact that $V_\lambda\subseteq N$ ensures that $z$ is an element of $N$ and hence we can conclude that $x$ is also contained in the inner model $N$.  
\end{proof}

  We are now ready to prove the main result of this section:

  \begin{proof}[Proof of Theorem \ref{thm:Main1PlusParameters}]
     Let $\map{j}{V}{M}$ be an I2-embedding with critical sequence $\vec{\lambda}=\seq{\lambda_n}{n<\omega}$ and let $N$ be an inner model of $\ZFC$ with $M^j_\omega\cup\{\vec{\lambda}\}\subseteq N$. Set $\lambda=\sup_{n<\omega}\lambda_n$. 
   Fix a $\Sigma^1_1$-formula $\varphi(w_0,\ldots,w_3)$ with second-order variables $w_0,\ldots,w_3$ and $B\in V_{\lambda+1}^N$ such that the set $$X ~ = ~ \Set{A\in V_{\lambda+1}}{\langle V_\lambda,\in\rangle\models\exists C ~ \neg\varphi(A,B,C,\vec{\lambda})}$$ is a subset of $C(\vec{\lambda})$ of cardinality greater than $(2^\lambda)^N$.  
  Set  $T=T^V_{\varphi,B,\vec{\lambda}}$, $\theta=(\lambda^+)^V$, $S_1=S_{T,\theta}^V$ and $S_0=S_{T,\theta}^N\subseteq S_1$. 
   An application of Lemma \ref{lemma:I2Sigma12}.\ref{item:AbsoShoen3} then shows that $p[S_0]^V=p[S_1]^V$. In particular, since    Lemma \ref{lemma:Sigma12TreeRepresentations} ensures that every element of $X$ is of the form $\bigcup\Set{y(n)}{n<\omega}$ for some $y\in p[S_1]^V$, we know that $p[S_0]^V$ has cardinality greater than $(2^\lambda)^N$ in $V$ and we can conclude that $p[S_0]^V\nsubseteq N$. 
   In this situation, an application of Lemma \ref{lem:proj_Tree} shows that, in $V$, there exists a perfect embedding $\map{\iota}{{}^\omega\lambda}{{}^\omega V_\lambda}$ satisfying  $\ran{\iota}\subseteq p[S_0]=p[S_1]$. 
   
   Now, work in $V$ and define $Y$ to be the set of all $y\in{}^\omega V_\lambda$ with the property that $y(m)=y(n)\cap V_{\lambda_m}$ holds for all $m\leq n<\omega$ and $\bigcup\Set{y(n)}{n<\omega}$ is an element of $C(\vec{\lambda})$. Then $Y$ is a closed subset of ${}^\omega V_\lambda$ and $p[S_0]\subseteq Y$. Moreover, the map $$\Map{\tau}{Y}{C(\vec{\lambda})}{y}{\bigcup\Set{y(n)}{n<\omega}}$$ is a homeomorphism of the subspace $Y$ of ${}^\omega V_\omega$ and the space $C(\vec{\lambda})$ with $\tau[p[S_0]]=X$. In particular, there is a perfect embedding of ${}^\omega\lambda$ into $C(\vec{\lambda})$  whose range is contained  in $X$.  
  \end{proof}

  \begin{proof}[Proof of Theorem \ref{thm:Main1}]
  Let $\map{j}{V}{M}$ be an I2-embedding with critical sequence $\vec{\lambda}=\seq{\lambda_n}{n<\omega}$ and set $\lambda=\sup_{n<\omega}{\lambda_n}$.  Let $X$ be a subset of $\POT{\lambda}$ of cardinality greater than $\lambda$ that is definable by a $\Sigma_1$-formula with parameters in $V_\lambda\cup\{V_\lambda,\vec{\lambda}\}$. 
  In $M^j_\omega[\vec{\lambda}]$, there is an injective enumeration $\vec{e}=\seq{d_\alpha}{\alpha<\lambda}$ of $V_\lambda$ with the property that $V_{\lambda_n}=\Set{d_\alpha}{\alpha<\lambda_n}$ holds for all $n<\omega$. Define $Y$ to be the set of all $y\in C(\vec{\lambda})$ with the property that $y(0)=0$ and there exists $A\in X$ with $d_{y(n+1)}=A\cap V_{\lambda_n}$ for all $n<\omega$. Then $Y$ is a subset of $C(\vec{\lambda})$ of cardinality greater than $\lambda$ that is definable by a $\Sigma_1$-formula with parameters in $M^j_\omega[\vec{\lambda}]\cap V_{\lambda+1}$. 
  Since $(2^\lambda)^{M^j_\omega[\vec{\lambda}]}<\lambda^+$, we can now combine Theorem \ref{thm:Main1PlusParameters} with Proposition \ref{prop:translate} to find a perfect embedding of ${}^\omega\lambda$ into $C(\vec{\lambda})$ whose range is contained in $Y$. Using the fact that the subspace $X$ of $\POT{\lambda}$ is homeomorphic to the subspace $Y$ of $C(\vec{\lambda})$, we can now conclude that there is a perfect embedding of ${}^\omega\lambda$ into $\POT{\lambda}$ whose range is contained in $X$.  
  \end{proof}


\section{The $\vec{\mathcal{U}}$-Baire Property}

In \cite{Di23}, a new type of regularity property for higher function  spaces is introduced: the \emph{$\lambda$-Baire property}. 
We can formalize this regularity property in a natural way as the $\lambda$-generalization of the classical Baire category notions:

\begin{defn}[\cite{Di23}]
	Let \(X\) be a topological space and let $A$ be a subset of $X$.
	\begin{enumerate-(i)} 
         	 \item The set $A$ is \emph{\(\lb\)-meager in $X$} if it is a \(\lb\)-union of nowhere dense sets. 
	 
	         \item The set $A$ is  \emph{\(\lb\)-comeager in $X$}  if it is the	complement of a \(\lb\)-meager set, {i.e.,} if it contains the intersection of $\lambda$-many open dense subsets of $X$. 
	         
	     \item The space \(X\) is a \emph{\(\lb\)-Baire space} if every non-empty open subset of $X$ is not \(\lb\)-meager.    
	     
	     \item The set $A$ has the \emph{$\lambda$-Baire property in $X$} if there exists an open set $U$ in $X$ such that the symmetric difference $A \, \Delta \, U$ is $\lambda$-meager. 
	\end{enumerate-(i)}
\end{defn}

 Note that a space $X$ is a $\lambda$-Baire space if and only if the intersection of $\lambda$-many open dense sets is dense. 
 The definition of the $\vec{\mathcal{U}}$-Baire property is more complex, as a direct generalization is unfruitful\footnote{In \cite{Di23}, it is proven that the space \(C(\vec{\lb})\) is the $\omega_1$-union of nowhere dense sets}. It is strictly correlated to \emph{diagonal Prikry forcing} (see, for example, {\cite[Section 1.3]{MR2768695}}). In the following, fix a strictly increasing sequence $\vec{\lambda}=\seq{\lambda_n}{n<\omega}$  of measurable cardinals with limit $\lambda$ and a sequence $\vec{\mathcal{U}}=\seq{U_n}{n<\omega}$ with the property that $U_n$ is a normal ultrafilter on $\lambda_n$ for all $n<\omega$.

\begin{defn}
  The \emph{diagonal Prikry forcing with $\vec{\mathcal{U}}$} is the partial order  $\P$ defined by the following clauses: 
  \begin{enumerate-(i)}
   \item Conditions in $\P$ are sequences $p=\seq{p_n}{n<\omega}$ with the property that there exists $n<\omega$ such that $p_i<\lambda_i$ for all $i<n$ and $p_i\in U_i$ for all $n\leq i<\omega$. In this case, we set $s^p = \langle p_0,\dots,p_{n-1}\rangle$, $\lh(p) = \lh(s^p)$, and $A_i^p=p_i$ for all $n\leq i<\omega$. The sequence $s^p$ is also called the \emph{stem of $p$}. 
   
   \item Given conditions $p$ and $q$ in $\P$, we have $p\leq_{\P} q$ if and only if the following statements hold: 
    \begin{itemize}
    \item $\lh(p) \geq \lh(q)$. 
    
    \item $s^p$ is an end-extension of $s^q$.  
    
    \item If $\lh(q) \leq i < \lh(p)$, then   $s^p(i) \in A^q_i$. 
    
    \item If $\lh(p)\leq i<\omega$, then $A^p_i \subseteq A^q_i$. 
\end{itemize}
Moreover, we say that $p \leq^{*}_{\P} q$ if $p \leq_{\P} q$ and $\lh(p) = \lh(q)$.

  \end{enumerate-(i)}
\end{defn}

The intuition behind the definitions below  is the following: it is easy to see that the product topology on the classical Baire space is isomorphic to the topology of the maximal filters on Cohen forcing. Thus, we are going to define a topology on \(C(\vec{\lb})\) that is isomorphic to the topology of the maximal filters on the diagonal Prikry forcing. Note that we can define this only if $\lambda$ is limit of measurable cardinals, therefore this will be the only setting for which to consider our new regularity property. 

\begin{defn}\label{definition:U-BP}
 \begin{enumerate-(i)}
  \item Given a condition $p$ in $\P$, we define $$N_p ~ = ~  \Set{x \in \Clb}{\forall i<\omega ~ [i < \lh(p) \rightarrow x(i) = s^p(i) ~ \wedge ~  i\geq\lh(p) \rightarrow  x(i) \in
A^p_j]}.$$ 

  \item The \emph{Ellentuck-Prikry $\vec{\mathcal{U}}$-topology} on $\Clb$ (briefly, $\vec{\mathcal{U}}$-EP topology) is the topology whose basic open sets are of the form $N_p$ for some condition $p$ in $\P$. 
  
  \item A subset $A$ of $\Clb$ has the \emph{$\vec{\mathcal{U}}$-Baire property} if it has the   $\lb$-Baire property in the $\vec{\mathcal{U}}$-EP topology. 
 \end{enumerate-(i)}
\end{defn}

The results of \cite{Di23} now show that the constructed topological spaces possess properties that generalize key properties of the classical Baire space to $\lambda$:

\begin{prop}[$\lb$-Baire Category, \cite{Di23}]\label{proposition:LambdaBaireSpace}
	The space \(\Clb\) endowed with the $\vec{\mathcal{U}}$-EP topology is a \(\lb\)-Baire space. Moreover, every subset of \(\Clb\) that is \(\lb\)-comeager in the $\vec{\mathcal{U}}$-EP topology contains a basic open set of this topology.  
\end{prop}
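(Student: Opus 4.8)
The plan is to mimic the classical proof that the Ellentuck topology on $[\omega]^\omega$ is a Baire space, adapted to diagonal Prikry forcing, with the key structural input being the \emph{Prikry Property} of $\P$: for every statement $\sigma$ in the forcing language and every condition $p$, there is $q\leq^*_\P p$ deciding $\sigma$. First I would record the dictionary between the $\vec{\mathcal{U}}$-EP topology and the forcing $\P$: the basic open set $N_p$ consists of the ``generic-like'' sequences through $p$, and $N_q\subseteq N_p$ exactly when $q\leq_\P p$ (modulo trivialities about stems), so density of an open set $D$ corresponds to: for every $p$ there is $q\leq_\P p$ with $N_q\subseteq D$. The first real step is a \emph{fusion lemma} for the direct-extension order $\leq^*_\P$: given a $\leq^*$-decreasing-with-side-condition-shrinking sequence $\seq{p^k}{k<\omega}$ such that the stems grow by exactly one coordinate at stage $k$ and the measure-one sets are shrunk compatibly, the ``diagonal intersection'' condition $p^\infty$ is again a condition in $\P$ and $p^\infty\leq_\P p^k$ for all $k$. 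This is the standard diagonal-Prikry fusion argument and uses normality of each $U_n$ and $\lambda_n$-completeness to keep the shrunk sets in $U_n$.

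The second step is the heart: given $\lambda$-many open dense sets $\seq{D_\alpha}{\alpha<\lambda}$ and a condition $p$, I would build, by a simultaneous recursion of length $\omega$ using the Prikry property and fusion, a single condition $q\leq_\P p$ such that $N_q$ meets — in fact is eventually contained in, below every sufficiently long stem-extension — each $D_\alpha$. Concretely: enumerate the (at most $\lambda$-many) relevant requirements and, stem-by-stem, for each of the $<\lambda_{\lh}$ one-step extensions of the current stem, use the Prikry property to $\leq^*$-shrink the tail so as to ``catch'' the next batch of $D_\alpha$'s whose index has been activated; since at stage $k$ only fewer than $\lambda_k$ many stems of that length exist and each shrinking is a $\leq^*$-move, $\lambda_k$-completeness of the tail ultrafilters lets us take a common refinement. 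Running fusion through this recursion yields $q$, and the genericity built into the construction (every branch of $N_q$ eventually enters each $D_\alpha$) gives density of $\bigcap_\alpha D_\alpha$ below $N_p$; since $p$ was arbitrary this is exactly the $\lambda$-Baire property, and combined with the characterization in the sentence following the definition it shows $\Clb$ is a $\lambda$-Baire space. For the ``moreover'' clause, I would argue contrapositively: if a $\lambda$-comeager set $A$ contains $\bigcap_{\alpha<\lambda}O_\alpha$ with each $O_\alpha$ open dense, then the same fusion construction, now \emph{deciding} membership in each $O_\alpha$ rather than merely meeting it, produces a condition $q$ with $N_q\subseteq\bigcap_\alpha O_\alpha\subseteq A$; the point is that a decision ``$N_{q'}\subseteq O_\alpha$'' for all $q'\leq q$ of a given stem length, obtainable by the Prikry property since $O_\alpha$ is open, propagates through fusion to $N_q\subseteq O_\alpha$.

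The main obstacle I anticipate is the bookkeeping in the $\omega$-length recursion: one must interleave (i) the stem-growth that fusion requires at each finite stage with (ii) the activation of the $\lambda$-many dense sets, while ensuring that at finite stage $k$ one only ever needs to shrink tails \emph{below fewer than $\lambda_k$ stems simultaneously} and for \emph{fewer than $\lambda_k$ dense sets}, so that $\lambda_k$-completeness of $U_k,U_{k+1},\ldots$ suffices to amalgamate all the $\leq^*$-shrinkings into one. A clean way to manage this is to fix in advance a surjection $\lambda\to\lambda\times\omega$ and, at stage $k$, only attend to requirements with ``rank'' below $\lambda_k$; since $\sup_k\lambda_k=\lambda$ every requirement is eventually attended to, and every basic open subset of $N_q$ of stem-length $\geq$ some bound lies inside the corresponding $D_\alpha$ (resp. $O_\alpha$). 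The completeness of the ultrafilters is exactly what makes this higher-cardinal recursion go through where the classical one only needed countable manipulations, so I would flag that as the conceptual crux even though each individual step is routine.
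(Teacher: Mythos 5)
The paper does not prove this proposition itself; it is quoted from \cite{Di23}. Your overall route --- translate dense open sets of the $\vec{\mathcal{U}}$-EP topology into dense open subsets of $\P$ (via the observation that a topologically dense open $U$ equals $U_P$ for the downward closed, predense set $P=\Set{p}{N_p\subseteq U}$), then run the Strong Prikry condition through an $\omega$-stage shrinking argument whose batching is governed by the increasing completeness of the $U_n$, producing below any $p$ a condition $q$ with $N_q\subseteq\bigcap_{\alpha<\lambda}O_\alpha$ --- is exactly the expected argument and it does go through; it yields the Baire-space statement and the ``moreover'' clause in one stroke, since the strong-Prikry ``catching'' is already the decision you describe separately.

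Two points in your write-up need repair. First, your fusion lemma as stated is false: an $\omega$-sequence of conditions whose stems grow by one coordinate at each stage has no lower bound in $\P$, since a common extension would require an infinite stem. The construction must never commit to extending the stem: at stage $k$ one ranges over \emph{all} potential stems of length $k$ (at most $\prod_{j<k}\lambda_j=\lambda_{k-1}<\lambda_k$ many), applies the Strong Prikry condition below each such stem with the current tails, for each requirement in the current batch, and intersects the resulting tails at coordinates ${\geq}k$; since coordinate $i$ is shrunk only at stages $k\leq i$ (or, alternatively, by countable completeness across the $\omega$ stages), the limit is a genuine condition $q\leq^{*}_{\P}p$. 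Your second paragraph (``at stage $k$ only fewer than $\lambda_k$ many stems of that length exist\dots'') is in fact this correct construction, so the fix is to restate the fusion lemma in that form rather than as a lemma about stem-growing sequences; note also that no uniform catching number $n_\alpha$ for $q$ is needed, because for $x\in N_q$ and a requirement handled at stage $k$, the number produced by the Strong Prikry condition below the stem $x\restriction k$ already witnesses $x\in O_\alpha$. Second, the bookkeeping has an off-by-one: at stage $k$ you may attend only to \emph{strictly fewer} than $\lambda_k$ requirements (say those with index below $\lambda_{k-1}$), since the amalgamation at coordinate $k$ intersects one set in $U_k$ per (stem, requirement) pair; ``rank below $\lambda_k$'' allows $\lambda_k$-many, one too many for $\lambda_k$-completeness. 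Both issues are local and fixable, and with them repaired your proof is correct and, as far as one can tell, essentially the argument of \cite{Di23}.
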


To motivate the main results of this section, we first show that the above property is non-trivial:

\begin{thm}\label{theorem:NoBP}
 There exists a subset of $C(\vec{\lambda})$ without the $\vec{\mathcal{U}}$-Baire property. 
\end{thm}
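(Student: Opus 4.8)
The plan is to mimic the classical construction of a set without the Baire property, adapted to the $\vec{\mathcal{U}}$-EP topology using a well-ordering of $C(\vec{\lambda})$. The key structural fact we have available is Proposition \ref{proposition:LambdaBaireSpace}: the space $C(\vec{\lambda})$ with the $\vec{\mathcal{U}}$-EP topology is a $\lambda$-Baire space, and moreover every $\lambda$-comeager set contains a basic open set $N_p$. This second clause is the crucial ingredient that replaces the classical fact that a comeager set in the Baire space is ``large'' — here it means a comeager set literally contains an $N_p$. Note also that there are $2^\lambda$ many conditions in $\P$ (each stem is a finite sequence of ordinals below $\lambda$, and each tail is an $\omega$-sequence of subsets of the $\lambda_n$'s), hence $2^\lambda$ many basic open sets; and each $N_p$ has cardinality $2^\lambda$.

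The construction I would carry out is a transfinite recursion of length $2^\lambda$. First I would fix an enumeration $\seq{p_\alpha}{\alpha<2^\lambda}$ of all conditions in $\P$ (so that $\seq{N_{p_\alpha}}{\alpha<2^\lambda}$ enumerates all nonempty basic open sets), and fix a well-ordering of $C(\vec{\lambda})$ in order-type $2^\lambda$. Then I would recursively choose points $x_\alpha, y_\alpha \in C(\vec{\lambda})$ for $\alpha < 2^\lambda$ such that $x_\alpha, y_\alpha \in N_{p_\alpha}$ and $x_\alpha, y_\alpha \notin \Set{x_\beta, y_\beta}{\beta<\alpha}$; this is possible since at stage $\alpha$ we have discarded fewer than $2^\lambda$ points and $\vert N_{p_\alpha}\vert = 2^\lambda$. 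Set $A = \Set{x_\alpha}{\alpha<2^\lambda}$. By construction, $A$ meets every nonempty basic open set $N_p$ (via some $x_\alpha$), and $C(\vec{\lambda})\setminus A$ also meets every nonempty basic open set $N_p$ (via the corresponding $y_\alpha$, which is not in $A$ since all the chosen points are distinct).

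Finally I would argue that $A$ has no $\vec{\mathcal{U}}$-Baire property. Suppose toward a contradiction that there is an open set $U$ with $A\,\Delta\,U$ $\lambda$-meager. If $U$ is nonempty, it contains some basic open $N_p$; but then $N_p \setminus A \subseteq U\setminus A \subseteq A\,\Delta\,U$ is $\lambda$-meager, so $N_p\cap A$ is $\lambda$-comeager in $N_p$ — wait, more carefully: $N_p \cap (C(\vec{\lambda})\setminus A)$ is $\lambda$-meager, so its complement relative to the whole space, which contains $N_p \cap A$ together with $C(\vec{\lambda})\setminus N_p$, is $\lambda$-comeager, hence by Proposition \ref{proposition:LambdaBaireSpace} contains some basic open set $N_q$. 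Since this comeager set is disjoint from $N_p\setminus A = N_p\cap(C(\vec{\lambda})\setminus A)$, we get $N_q\cap N_p \subseteq A$. But $N_q\cap N_p$ is nonempty open (it contains $N_r$ for a common refinement $r$ of $p$ and $q$, when such exists; one checks $N_p\cap N_q\neq\emptyset$ forces compatibility of $p,q$ in $\P$), contradicting that $C(\vec{\lambda})\setminus A$ meets every nonempty basic open set. If instead $U=\emptyset$, then $A$ itself is $\lambda$-meager, so $C(\vec{\lambda})\setminus A$ is $\lambda$-comeager and contains some $N_q$, contradicting that $A$ meets $N_q$.

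The main obstacle I anticipate is the bookkeeping in the last paragraph: correctly tracking which sets are meager versus comeager relative to a basic open set and invoking the ``$\lambda$-comeager $\Rightarrow$ contains a basic open set'' property of Proposition \ref{proposition:LambdaBaireSpace} in exactly the right form, together with the elementary but slightly fiddly verification that two conditions $p,q\in\P$ with $N_p\cap N_q\neq\emptyset$ are compatible in $\P$ (so that $N_p\cap N_q$ again contains a basic open set). Everything else — the cardinality count $\vert N_p\vert = 2^\lambda$, the existence of $2^\lambda$ basic open sets, and the recursion of length $2^\lambda$ — is routine. I would also double-check the typo-level issue in Definition \ref{definition:U-BP} (the stray index $j$ in $A_j^p$ should be $A_i^p$) does not affect the argument, which it does not.
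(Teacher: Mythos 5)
Your construction of $A$ is fine: there are $2^\lambda$ conditions in $\P$, each $N_p$ has cardinality $2^\lambda$ (since $\lambda$ is a strong limit of countable cofinality, $\prod_i\vert A^p_i\vert=\lambda^{\aleph_0}=2^\lambda$), so the recursion yields a set $A$ such that both $A$ and $C(\vec{\lambda})\setminus A$ meet every basic open set. The genuine gap is in the final deduction, exactly at the point you flag with ``when such exists''. Having fixed $N_p\subseteq U$, you apply Proposition \ref{proposition:LambdaBaireSpace} to the $\lambda$-comeager set $A\cup(C(\vec{\lambda})\setminus N_p)$ and obtain a basic open $N_q$ inside it; but nothing forces $N_q$ to meet $N_p$. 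For instance, any condition $q$ whose stem is incompatible with $s^p$ gives $N_q\subseteq C(\vec{\lambda})\setminus N_p$, in which case $N_q\cap N_p\subseteq A$ is vacuous and no contradiction follows. The proposition as stated is global and does not localize for free to ``contains a basic open subset of the given $N_p$'', and the mere two-sided density of $A$ is not contradicted by ``$N_p\setminus A$ is $\lambda$-meager'' (dense sets can perfectly well be $\lambda$-meager). So the key step fails as written.

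The argument is repairable inside your framework, and then it is genuinely different from the paper's proof: apply Proposition \ref{proposition:LambdaBaireSpace} directly to the comeager set $C(\vec{\lambda})\setminus(A\,\Delta\,U)$ to get a basic open $N_q$ on which $A\cap N_q=U\cap N_q$. If this set is empty, it contradicts the fact that $A$ meets $N_q$; if it is nonempty, it is open and hence contains some basic $N_r$ (the sets $N_p$ do form a basis: given $x\in N_p\cap N_q$, the condition with stem $x\restriction\max(\lh(p),\lh(q))$ and tails $A^p_i\cap A^q_i$ witnesses this), so $N_r\subseteq A$, contradicting that $C(\vec{\lambda})\setminus A$ meets $N_r$. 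The paper takes a different route altogether: it avoids Bernstein sets, first proving Lemma \ref{lemma:SmallOpen} (every open set agrees with a union of at most $\lambda$-many basic open sets up to a nowhere dense set), which reduces the relevant pairs of open and $\lambda$-meager sets to $2^\lambda$-many coded ones, and then diagonalizes against that enumeration. That heavier approach pays off later: the same construction is carried out definably in the proof of Theorem \ref{theorem:Sigma1No-UBP}, which a Bernstein-style choice of points would not obviously support.
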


 The fact that the $\vec{\mathcal{U}}$-EP topology is build using $2^\lambda$-many basic open subsets stops the proof of the above result from being a routine diagonalization argument. 
 Instead, we have to use strong combinatorial properties of $\P$ to reduce the class of relevant open subsets:

 \begin{lem}[Strong Prikry condition]
   If $D$ is a dense open subset of $\P$ and $p$ is a condition in $\P$, then there exists  a condition $q\leq^*_{\P}p$ and $n<\omega$  such that  $r\in D$ holds for every condition $r\leq_{\P}q$ with $\lh(r)\geq n$.   \qed  
 \end{lem}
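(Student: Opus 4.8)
The plan is to adapt the classical proof of the strong Prikry property to the diagonal Prikry forcing $\P$; the only ingredients needed are the $\lambda_n$-completeness of the normal ultrafilters $U_n$ and the fact that the $\lambda_n$ are strictly increasing. Put $k=\lh(p)$. For a finite sequence $t$ with $t\restriction k=s^p$ and $t(i)<\lambda_i$ for all $i\in\dom(t)$, call $t$ \emph{promising} if some condition in $D$ has stem $t$; since $D$ is open, a promising $t$ in fact admits a condition in $D$ with stem $t$ whose measure-one sets are as small as we like. For $q\le^{*}_{\P}p$ and a legal extension $\vec\alpha=\langle\alpha_0,\dots,\alpha_{m-1}\rangle$ of $q$ (that is, $\alpha_j\in A^q_{k+j}$ for all $j<m$), write $q^{\vec\alpha}$ for the condition with stem $s^q{}^\frown\vec\alpha$ and with $A^{q^{\vec\alpha}}_i=A^q_i$ for all $i\ge k+m$; since $D$ is open, whether $s^q{}^\frown\vec\alpha$ is promising depends only on the sequence $s^q{}^\frown\vec\alpha$ and not on the measure-one sets of $q$.

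The central step is a fusion construction producing $q_0\le^{*}_{\P}p$ such that, for every $m<\omega$, either all stems $s^p{}^\frown\vec\alpha$ with $\vec\alpha$ a legal $m$-extension of $q_0$ are promising, or none of them are. For a fixed $m$ one shrinks $A^p_k,\dots,A^p_{k+m-1}$ one coordinate at a time from the top: at coordinate $k+j$ there are fewer than $\lambda_{k+j}$ many relevant partial sequences $\langle\alpha_0,\dots,\alpha_{j-1}\rangle$, and for each of them the induced two-valued colouring of a subset of $\lambda_{k+j}$ is constant on a set in $U_{k+j}$, so $\lambda_{k+j}$-completeness of $U_{k+j}$ lets us intersect these homogeneous sets into one. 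Performing this for every $m$ and then, at each coordinate $k+j$, intersecting the countably many sets obtained for the various $m>j$ yields $q_0$.

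I would then invoke density: fix $r\le_{\P}q_0$ with $r\in D$ and set $m_0=\lh(r)-k$, so the stem of $r$ is $s^{q_0}{}^\frown\vec\alpha=s^p{}^\frown\vec\alpha$ for a legal $m_0$-extension $\vec\alpha$ of $q_0$. This stem is promising, so by the homogeneity of $q_0$ at level $m_0$ every stem $s^p{}^\frown\vec\beta$ with $\vec\beta$ a legal $m_0$-extension of $q_0$ is promising; for each such $\vec\beta$ fix a condition $e^{\vec\beta}\in D$ with stem $s^p{}^\frown\vec\beta$ and, shrinking if necessary, with $A^{e^{\vec\beta}}_i\subseteq A^{q_0}_i$ for all $i\ge k+m_0$. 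For each $\ell\ge k+m_0$ there are fewer than $\lambda_\ell$ many legal $m_0$-extensions of $q_0$, so the $\lambda_\ell$-completeness of $U_\ell$ lets me define $q\le^{*}_{\P}q_0$ by leaving $A^{q_0}_{k+j}$ unchanged for $j<m_0$ and setting $A^q_\ell=\bigcap_{\vec\beta}A^{e^{\vec\beta}}_\ell$ for $\ell\ge k+m_0$; then $q$ has the same legal $m_0$-extensions as $q_0$, and $q^{\vec\beta}\le^{*}_{\P}e^{\vec\beta}\in D$, whence $q^{\vec\beta}\in D$, for every such $\vec\beta$. Finally, set $n=k+m_0$: any $r'\le_{\P}q$ with $\lh(r')\ge n$ has stem $s^q{}^\frown\vec\gamma$ for some $\vec\gamma$ of length $\ge m_0$, and checking the clauses defining $\le_{\P}$ gives $r'\le_{\P}q^{\vec\gamma\restriction m_0}$, so $r'\in D$ since $D$ is open.

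The hard part is the fusion step, where the colouring must be homogenised at all levels $m$ simultaneously while preserving every measure-one set; the reason plain completeness suffices — with no need for diagonal intersections — is the bookkeeping point that at coordinate $k+j$ only fewer than $\lambda_{k+j}$ many partial configurations ever have to be accounted for, which is exactly where the hypothesis that $\vec{\lambda}$ is a strictly increasing sequence of measurable cardinals enters.
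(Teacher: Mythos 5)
Your proof is correct: the paper itself states this lemma without proof (the \qed marks it as the standard Strong Prikry property of diagonal Prikry forcing, as treated in the cited handbook chapter on Prikry-type forcings), and your argument is precisely the standard one, carried out soundly — the level-by-level homogenization of the ``promising'' colouring uses only $\lambda_{k+j}$-completeness because fewer than $\lambda_{k+j}$ partial stems occur below each coordinate, the countable intersection over all lengths $m$ is legitimate by $\sigma$-completeness, and the final shrink of the tails into the witnesses $e^{\vec\beta}$ together with openness of $D$ gives the conclusion with $n=\lh(p)+m_0$ (including the degenerate case $m_0=0$). No gaps.
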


 \begin{cor}\label{corollary:StrongPrikry}
  If $O$ is an open subset of $\P$ and $p$ is a condition in $\P$, then there exists a condition $\bar{p}\leq^*_{\P}p$ such that if there exists a condition $q \leq_{\P} \bar{p}$ with $q \in O$, then $r \in O$ holds for every $r \leq_{\P} \bar{p}$ with $\lh(r)\geq\lh(q)$.  \qed 
 \end{cor}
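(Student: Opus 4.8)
The plan is to reduce to the Strong Prikry condition by passing to the dense open subset of $\P$ generated by $O$. More precisely, given an open set $O$ and a condition $p$, I would set
\[
 D ~ = ~ O ~ \cup ~ \Set{q\in\P}{\forall r\leq_{\P}q ~ r\notin O},
\]
that is, $D$ consists of the conditions in $O$ together with the conditions that have no extension lying in $O$. The first step is to check that $D$ is dense and open in $\P$: it is open because $O$ is open and the second set is clearly downward closed, and it is dense because any condition either has an extension in $O$ (in which case that extension lies in $D$) or has no such extension (in which case the condition itself lies in the second set, hence in $D$).

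The second step is to apply the Strong Prikry condition to $D$ and $p$, obtaining a condition $\bar p\leq^*_{\P}p$ and an $n<\omega$ such that every $r\leq_{\P}\bar p$ with $\lh(r)\geq n$ belongs to $D$. This $\bar p$ is the condition we want. To verify the conclusion, suppose there is a condition $q\leq_{\P}\bar p$ with $q\in O$, and let $r\leq_{\P}\bar p$ be arbitrary with $\lh(r)\geq\lh(q)$. I may freely increase $n$ so that $n\leq\lh(q)$; then $\lh(r)\geq\lh(q)\geq n$, so $r\in D$ by the choice of $\bar p$. Now $r$ cannot lie in the second component of $D$, because $r$ and $q$ have a common extension in $\P$ (any condition $s\leq_{\P}r$ with stem long enough to end-extend $s^q$, using that $\lh(r)\geq\lh(q)$ and $s^r$ end-extends $s^q$), and such a common extension would have to lie in $O$ since $q\in O$ and $O$ is open; more directly, since $\lh(r)\geq\lh(q)$, $s^r$ end-extends $s^q$ and $r\leq_{\P}q$, whence $r\in O$ by openness of $O$. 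Therefore $r\in O$, as required.

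The only delicate point is the verification that $r$ lands in $O$ rather than in the second piece of $D$, and for this it is cleanest to observe directly that $r\leq_{\P}\bar p$ with $\lh(r)\geq\lh(q)$ forces $r\leq_{\P}q$: both are extensions of $\bar p$, and since $r$ has the longer (or equal) length, its stem end-extends $q$'s stem through values lying in the appropriate $A^{\bar p}_i\subseteq A^q_i$, and its measure-one sets are contained in those of $\bar p$ hence of $q$. Then $r\in O$ follows from $q\in O$ and the openness of $O$, so $r$ cannot be in the second component of $D$, and membership of $r$ in $D$ gives $r\in O$. I do not expect any real obstacle here; the argument is a routine packaging of the Strong Prikry condition, and the main thing to be careful about is simply choosing $n$ small enough (or enlarging it harmlessly) so that the hypothesis $\lh(r)\geq\lh(q)$ already guarantees $\lh(r)\geq n$.
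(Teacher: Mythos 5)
Your reduction via the dense open set $D = O \cup \Set{q\in\P}{\forall r\leq_{\P}q~r\notin O}$ and the Strong Prikry condition is a natural first move, but the verification has a genuine gap at its central step: it is not true that a condition $r\leq_{\P}\bar{p}$ with $\lh(r)\geq\lh(q)$ must satisfy $r\leq_{\P}q$, nor even be compatible with $q$. Two extensions of $\bar{p}$ only share the stem of $\bar{p}$; at coordinates $i$ with $\lh(\bar{p})\leq i<\lh(q)$ their stems are merely required to take values in $A^{\bar{p}}_i$, so $s^r$ and $s^q$ can disagree there, in which case $q$ and $r$ have no common extension at all. (Also, $q\leq_{\P}\bar{p}$ gives $A^q_i\subseteq A^{\bar{p}}_i$, not the inclusion you use.) Note that if your claim were correct, the corollary would be trivially witnessed by $\bar{p}=p$ with no appeal to the Strong Prikry condition, which already signals that it cannot be right. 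The remark that you may ``freely increase $n$ so that $n\leq\lh(q)$'' is likewise backwards: $n$ may be harmlessly increased, but that only makes $n\leq\lh(q)$ harder to arrange.

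Moreover, the gap is not just in the verification: the $\bar{p}$ handed to you by the Strong Prikry condition for $D$ need not satisfy the corollary, so the argument cannot be patched locally. For example, let $p$ be the condition with empty stem and $A^p_i=\lambda_i$, and let $O=\Set{q\in\P}{\lh(q)\geq 1,~s^q(0)=0}$, which is open. Then $\bar{p}=p$ and $n=1$ are a legitimate output of the Strong Prikry condition applied to $D$ (any $r$ with $\lh(r)\geq 1$ either has $s^r(0)=0$, hence $r\in O$, or has no extension in $O$), yet the conditions with stems $\langle 0\rangle$ and $\langle 1\rangle$ below $\bar{p}$ show that the corollary's conclusion fails for this $\bar{p}$. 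The actual content of the corollary is a homogeneity across stems of equal length, and this is where the measures must enter: one shrinks the sets $A^p_i$, using the $\lambda_i$-completeness and normality of the $U_i$ (a partition/diagonal-intersection argument as in the proof of the Prikry property, e.g.\ first choosing for each stem $s$ a witnessing shrink putting the condition with stem $s$ into $O$ when one exists, and then homogenizing over all stems of each fixed length), so that for conditions of the form ``stem $s$ together with the tails of $\bar{p}$'' membership in $O$ depends only on $\lh(s)$; openness of $O$ then yields the statement. In the example this amounts to shrinking $A_0$ to $\lambda_0\setminus\{0\}\in U_0$, making the hypothesis of the corollary vacuous. Your write-up skips exactly this homogenization step, which is the real content of the corollary.
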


 Given a set $P$ of conditions in $\P$, we let $$U_P ~ = ~ \bigcup\Set{N_p}{p\in P} ~ \subseteq ~ C(\vec{\lambda})$$ denote the corresponding open set in the $\vec{\mathcal{U}}$-EP topology.

 \begin{prop}\label{proposition:PredenseDense}
  A set $P$ of condition in $\P$ is predense in the partial order $\P$ if and only if $U_P$ is  dense in the $\vec{\mathcal{U}}$-EP topology. 
 \end{prop}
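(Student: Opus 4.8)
The plan is to prove both implications by directly unpacking the definitions of "predense" and "dense in the $\vec{\mathcal{U}}$-EP topology", translating statements about conditions in $\P$ into statements about basic open sets $N_p$, and using the order-theoretic description of $\leq_{\P}$ together with the basic fact that $N_q \subseteq N_p$ precisely when $q \leq_{\P} p$ (and, conversely, that if $N_q \cap N_p \neq \emptyset$ then $q$ and $p$ have a common extension). The key translation dictionary is: a set $P$ of conditions is predense iff every condition $p$ is compatible with some element of $P$; a set $U_P$ is dense iff every nonempty basic open set $N_p$ meets $U_P$, i.e.\ $N_p \cap N_q \neq \emptyset$ for some $q \in P$.

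First I would establish the elementary combinatorial lemma that for conditions $p,q \in \P$, one has $N_p \cap N_q \neq \emptyset$ if and only if $p$ and $q$ are compatible in $\P$. The forward direction: given $x \in N_p \cap N_q$, one reads off a stem (the longer of $s^p$, $s^q$, which must agree on their common domain since both equal an initial segment of $x$) and then intersects the ultrafilter-valued coordinates of $p$ and $q$ (using that $U_i$ is a filter, so $A^p_i \cap A^q_i \in U_i$) to produce a common extension $r \leq_{\P} p, q$; the point $x$ itself witnesses each of the four clauses in the definition of $\leq_{\P}$. The backward direction: if $r \leq_{\P} p,q$ then $N_r \subseteq N_p \cap N_q$, and $N_r \neq \emptyset$ because any condition's basic open set is nonempty (pick coordinates from the ultrafilter sets below the stem). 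This lemma reduces the whole proposition to the remark that $N_p \subseteq U_P$ meets $U_P$ trivially, so density of $U_P$ is equivalent to: for every $p$ there is $q \in P$ with $N_p \cap N_q \neq \emptyset$, which is exactly predensity of $P$ by the lemma.

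With that in hand, the two implications are short. If $P$ is predense, then given a nonempty basic open set $N_p$, pick $q \in P$ compatible with $p$; the lemma gives $N_p \cap N_q \neq \emptyset$, and since $N_q \subseteq U_P$ we get $N_p \cap U_P \neq \emptyset$; as the $N_p$ form a basis, $U_P$ is dense. Conversely, if $U_P$ is dense, then for any condition $p$ the nonempty open set $N_p$ meets $U_P$, so $N_p \cap N_q \neq \emptyset$ for some $q \in P$, whence $p$ and $q$ are compatible by the lemma; thus $P$ is predense.

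The main obstacle is purely bookkeeping in the compatibility lemma: one must be careful that the stems of two conditions whose neighbourhoods overlap are automatically comparable (this uses that both stems are initial segments of a single $x \in C(\vec{\lambda})$), and that the recipe for the common refinement genuinely produces a condition of $\P$ — in particular that finitely many coordinates below the new stem land inside the relevant ultrafilter sets (which holds because $x$ lies in both neighbourhoods) and that the tail coordinates $A^p_i \cap A^q_i$ remain in $U_i$. No deep property of $\P$ (not even the Prikry property) is needed here; only that each $U_i$ is a filter and that basic open sets of the $\vec{\mathcal{U}}$-EP topology are nonempty.
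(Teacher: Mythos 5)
Your proposal is correct and follows essentially the same route as the paper: the paper's two directions are exactly your compatibility lemma (a common extension $r\leq_{\P}p,q$ gives $\emptyset\neq N_r\subseteq N_p\cap N_q$, and conversely a point $x\in N_p\cap N_q$ is used to build a common extension with stem read off from $x$ and tail sets $A^p_i\cap A^q_i$), applied to the basic open sets $N_p$. Packaging it as a separate lemma is only a cosmetic difference.
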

 
 \begin{proof}
  First, assume that $P$ is predense in $\P$ and fix a condition $p$ in $\P$. Then there exists a condition $q$ in $P$ and a condition $r$ in $\P$ with $r\leq_\P p,q$. We now know that $\emptyset\neq N_r\subseteq N_p\cap N_q\subseteq N_p\cap U_P$. 
  
  Now, assume that $U_P$ is dense in the $\vec{\mathcal{U}}$-EP topology and fix a condition $p$ in $\P$. Since $N_p\cap U_P\neq\emptyset$, we can find a condition $q$ in $P$ and an element $x$ of $C(\vec{\lambda})$ with $x\in N_p\cap N_q$. Then there exists a condition $r$ in $\P$ with $r_i=x(i)$ for all $i<\max(\lh(p),\lh(q))$ and $A^r_i=A^p_i\cap A^q_i$ for all $\max(\lh(p),\lh(q))\leq i<\omega$. We then know that $r\leq_\P p,q$ holds. These computations show that $P$ is predense in $\P$. 
 \end{proof}

 \begin{lem}\label{lemma:SmallOpen}
  %
  If $U$ is an open set  in the $\vec{\mathcal{U}}$-EP topology, then there exists a set $P$ of at most $\lambda$-many conditions in $\P$  
  such that $U_P\subseteq U$ and $U\setminus U_P$ is nowhere dense in the $\vec{\mathcal{U}}$-EP topology.  
 \end{lem}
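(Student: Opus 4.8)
The plan is to replace the open set $U$ by the downward-closed set of conditions
$$O ~ = ~ \Set{p\in\P}{N_p\subseteq U},$$
which satisfies $U_O=U$ since $U$ is open, and then to use the strong Prikry property (Corollary~\ref{corollary:StrongPrikry}) to cut $O$ down to a family indexed by the stems — of which there are only $\lambda$-many, rather than the $2^\lambda$-many conditions of $\P$. Concretely, for each stem $s$ let $\mathbf{1}_s$ denote the $\leq^*_{\P}$-largest condition with $s^{\mathbf{1}_s}=s$ (so $A^{\mathbf{1}_s}_i=\lambda_i$ for all $i\geq\lh(s)$), and apply Corollary~\ref{corollary:StrongPrikry} to $O$ and $\mathbf{1}_s$ to obtain a condition $q_s\leq^*_{\P}\mathbf{1}_s$ with $s^{q_s}=s$ such that, if some $q\leq_{\P}q_s$ lies in $O$, then every $r\leq_{\P}q_s$ with $\lh(r)\geq\lh(q)$ lies in $O$. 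I would then set
$$P ~ = ~ \Set{q_s}{s\text{ a stem with }N_{q_s}\subseteq U}.$$
As there are exactly $\lambda$-many stems, $\vert P\vert\leq\lambda$, and since each $q_s\in P$ satisfies $N_{q_s}\subseteq U$ by construction, $U_P\subseteq U$ is immediate.

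The first substantial step is a \emph{filling-in} observation: for each stem $s$, if some $q\leq_{\P}q_s$ lies in $O$, then in fact $N_{q_s}\subseteq U$. Indeed, given such a $q$ and an arbitrary $x\in N_{q_s}$, the condition $r$ with stem $x\rest\lh(q)$ and $A^r_i=A^{q_s}_i$ for all $i\geq\lh(q)$ is a legitimate condition below $q_s$ of length $\lh(q)\geq\lh(q)$ containing $x$; by the choice of $q_s$ it lies in $O$, so $x\in N_r\subseteq U$, and since $x$ was arbitrary we get $N_{q_s}\subseteq U$. This yields, for each stem $s$, a clean dichotomy: either $N_{q_s}\subseteq U$ (so $q_s\in P$) or no extension of $q_s$ belongs to $O$ at all.

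It remains to show that $U\setminus U_P$ is nowhere dense in the $\vecU$-EP topology; since the sets $N_p$ form a basis, it suffices to produce, for each condition $p$, a nonempty $N_{p'}\subseteq N_p$ with $N_{p'}\cap(U\setminus U_P)=\emptyset$. If $N_p\cap U=\emptyset$ we are done with $p'=p$, so assume $N_p\cap U\neq\emptyset$; picking $x\in N_p\cap U$, a basic neighbourhood $N_q\subseteq U$ of $x$, and a common refinement, we may replace $p$ by a refinement lying in $O$ (it suffices to find $N_{p'}$ inside the original $N_p$), so assume $p\in O$. Put $s=s^p$ and let $p'\leq^*_{\P}p,q_s$ be the condition with stem $s$ and $A^{p'}_i=A^p_i\cap A^{q_s}_i$. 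Then $N_{p'}\subseteq N_p\subseteq U$ shows $p'\in O$, so the filling-in step gives $N_{q_s}\subseteq U$, hence $q_s\in P$; therefore $N_{p'}\subseteq N_{q_s}\subseteq U_P$, and $N_{p'}$ is a nonempty basic open subset of $N_p$ disjoint from $U\setminus U_P$, as required.

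I expect the main obstacle to be the combinatorial bookkeeping around the strong Prikry property: one must ensure that the single condition $q_s$ attached to a stem $s$ is comparable with every $p\in O$ having that stem (handled by descending to the common $\leq^*_{\P}$-refinement $p'$), and one must check carefully that the filling-in recipe produces a genuine condition below $q_s$ of the intended length with tails inside the $U_i$ and stem entries below the $\lambda_i$. The remaining points — that there are exactly $\lambda$-many stems, that $U_O=U$ and $U_P\subseteq U$, that each $N_p$ is nonempty, and that $p\leq_{\P}q$ implies $N_p\subseteq N_q$ — are routine.
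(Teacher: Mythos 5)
Your argument is correct, and its first half coincides with the paper's: you form $O=\Set{p\in\P}{N_p\subseteq U}$, apply the strong Prikry property (Corollary \ref{corollary:StrongPrikry}) once per stem (the paper indexes this by the set $S$ of conditions with full tails, which is the same thing), take $P$ to be the resulting $\lambda$-many refinements that work, and prove $U_P\subseteq U$ by exactly the paper's ``filling-in'' argument. Where you genuinely diverge is in showing that $U\setminus U_P$ is nowhere dense. The paper does this by exhibiting a single explicit nowhere dense superset: it forms the condition $u$ with $A^u_i=\bigcap\Set{A^{\bar{p}}_i}{p\in S,\ \lh(p)\leq i}$ (legitimate because, $\lambda_i$ being measurable, each $U_i$ is $\lambda_i$-complete and there are fewer than $\lambda_i$ stems of length at most $i$) and shows $U\setminus U_P\subseteq N$ for the corresponding ``tail-escaping'' set $N$. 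You instead verify the nowhere-density criterion directly: inside every nonempty basic open $N_p$ you find a nonempty basic $N_{p'}$ that is either disjoint from $U$ or, after refining into $O$ and intersecting tails with $q_{s^p}$, contained in $N_{q_{s^p}}\subseteq U_P$, hence disjoint from $U\setminus U_P$ in either case. This is slightly more elementary (it uses only that the $U_i$ are filters, not their completeness, for this step) and avoids the auxiliary set $N$; the paper's route buys a concrete nowhere dense set covering $U\setminus U_P$, which is not needed for the lemma as stated. Your auxiliary verifications (compatibility of conditions with a common initial segment of $x$, legitimacy of the filled-in condition below $q_s$, nonemptiness of basic open sets, and the count of $\lambda$-many stems) are all sound.
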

 
 \begin{proof}
  %
  %
  Define $O$ to be the set of all conditions $p$ in $\P$ with the property that $N_p\subseteq U$. 
  Then $O$ is an open subset of $\P$. In addition, define $S$ to be the set of all conditions $p$ in $\P$  such that $p_i=\lambda_i$ holds for all $\lh(p)\leq i<\omega$. 
  In this situation, Corollary \ref{corollary:StrongPrikry} shows that for every $p\in P$,  we can then find a condition $\bar{p}\leq_{\P}^*p$ with the property that if there is a condition $q \leq_{\P} \bar{p}$ with $q \in O$, then $r \in O$ holds for every $r \leq_{\P} \bar{p}$ with $\lh(r)\geq\lh(q)$. Define $$P ~ = ~ \Set{\bar{p}}{p\in S, ~ \exists q ~ [q\leq_{\P}\bar{p} ~ \wedge ~ q\in O]}.$$ The fact that the set $S$ has cardinality $\lambda$ then ensures that $P$ consists of at most $\lambda$-many conditions in $\P$.  
  %

 \begin{claim*}
  $U_P\subseteq U$.
 \end{claim*}
 
   \begin{proof}[Proof of the Claim]
    Pick $p\in S$ with the property that there is $q\leq_{\P}\bar{p}$ with $q\in O$ and fix $x\in N_{\bar{p}}$. 
    Then there exists a condition $r\leq_{\P}\bar{p}$ with $r_i=x(i)$ for all $i<\lh(q)$ and $r_i=A^{\bar{p}}_i$ for all $\lh(r)\leq i<\omega$. 
    Since $\lh(q)=\lh(r)$, we then know that $r\in O$ and $x\in N_r\subseteq U$.  
   \end{proof}

 \begin{claim*}
  If $p\in S$ with $q\notin O$ for all $q\leq_{\P}\bar{p}$, then $N_{\bar{p}}\cap U=\emptyset$. 
 \end{claim*}
 
   \begin{proof}[Proof of the Claim]
    Assume, towards a contradiction, that there is an $x\in N_{\bar{p}}\cap U$. Pick a condition $q$ in $\P$ with $x\in N_q\subseteq U$. Then there exists a condition $r$ in $\P$ with $r_i=x(i)$ for all $i<\max(\lh(\bar{p}),\lh(q))$ and $r_i=A^{\bar{p}}_i\cap A^q_i$ for all $\max(\lh(\bar{p}),\lh(q))\leq i<\omega$. We then know that $r\leq_\P \bar{p},q$ and $x\in N_r\subseteq N_q\subseteq U$. But this implies that  $r$ is an element of $O$ below $\bar{p}$, a contradiction.   
   \end{proof}

 Define $u$ to be the unique condition in $\P$ with $\lh(u)=0$ and $$A_i^u ~ = ~ \bigcap\Set{A_i^{\bar{p}}}{p\in S, ~ \lh(p)\leq i}$$ for all $i<\omega$. In addition, set $$N ~ = ~ \Set{x\in C(\vec{\lambda})}{\forall j<\omega ~ \exists j\leq i<\omega ~ x(i)\notin A_i^u}.$$

  \begin{claim*}
   The set $N$ is nowhere dense in the $\vec{\mathcal{U}}$-EP topology. 
  \end{claim*}
  
    \begin{proof}[Proof of the Claim]
      Assume, towards a contradiction, that $N$ is dense in $N_p$ for some  condition $p$ in $\P$. 
      Let $q\leq^*_{\P}p$  be the unique condition with $A^q_i=A^p_i\cap A^u_i$ for all $\lh(p)\leq i<\omega$. 
      Then there is $x\in N\cap N_q$ and we can find $\lh(q)\leq i<\omega$ with $x(i)\notin A^u_i$. But, this implies that $x(i)\notin A^q_i$, a contradiction.  
    \end{proof}

  \begin{claim*}
   $U\setminus U_P\subseteq N$. 
  \end{claim*}
  
  \begin{proof}[Proof of the Claim]
    Pick $x\in U\setminus U_P$ and fix $j<\omega$. Let $p$ denote the unique element of $S$ with $s^p=x\restriction j$. 
    Then $x\notin N_{\bar{p}}$, because otherwise we would have $x\in N_{\bar{p}}\cap U\neq\emptyset$ and our second claim would imply that $N_{\bar{p}}\subseteq U_P$. 
     Since $\bar{p}\leq^*_{\P}p$, we can now find $j\leq i<\omega$ with $x(i)\notin A^{\bar{p}}_i$. Our definitions then ensure that $A_i^u\subseteq A^{\bar{p}}_i$ and we can conclude that $x(i)\notin A^u_i$. These computations show that $x$ is an element of $N$. 
  \end{proof}
  
  This last claim completes the proof of the lemma. 
 \end{proof}

 \begin{proof}[Proof of Theorem \ref{theorem:NoBP}]
  Define $\mathcal{O}$ to be the collection of all subsets of $C(\vec{\lambda})$ of the form $U_P$ for some set $P$ of at most $\lambda$-many conditions in $\P$.   Then the set $\mathcal{O}$ has cardinality at most $2^\lambda$ and we can fix an enumeration $\seq{\langle U_\gamma,M_\gamma\rangle}{\gamma<2^\lambda}$  of all pairs $\langle U,M\rangle$ such that $U\in\mathcal{O}$ and there exists a sequence $\seq{O_\alpha}{\alpha<\lambda}$ of dense elements of $\mathcal{O}$  with $M=\bigcup_{\alpha<\lambda}(C(\vec{\lambda})\setminus O_\alpha)$. 
  We inductively define increasing sequences $\seq{A_\gamma}{\gamma<2^\lambda}$ and $\seq{B_\gamma}{\gamma<2^\lambda}$ of subsets of $C(\lambda)$ with $A_\gamma\cap B_\gamma=\emptyset$ and $\vert A_\gamma\cup B_\gamma\vert\leq\vert\gamma\vert$ for all $\gamma<2^\lambda$. Fix $\gamma<2^\lambda$ and assume that we already defined $A_\beta$ and $B_\beta$ for all $\beta<\gamma$. Set $A=\bigcup_{\beta<\gamma}A_\beta$ and $B=\bigcup_{\beta<\gamma}B_\beta$. Then $A\cap B=\emptyset$ and both sets have cardinality less than $2^\lambda$. 
  First, assume that $U_\gamma$ is empty. Since Proposition \ref{proposition:LambdaBaireSpace} ensures that $C(\vec{\lambda})\setminus M_\gamma$ has cardinality $2^\lambda$, we can find $x\in C(\vec{\lambda})\setminus (B\cup M_\gamma)$. We then define $A_\gamma=A\cup\{x\}$ and $B_\gamma=B$. 
  Next, assume that $U_\gamma$ is non-empty. Then  Proposition \ref{proposition:LambdaBaireSpace} shows that $U_\gamma\setminus M_\gamma$ has cardinality $2^\lambda$ and we can find $x\in U_\gamma\setminus (A\cup M_\gamma)$. We now define $A_\gamma=A$ and  $B_\gamma=B\cup\{x\}$. This completes our construction.

  Define  $A=\bigcup_{\gamma<2^\lambda}A_\gamma$ and $B=\bigcup_{\gamma<2^\lambda}B_\gamma$. Then $A\cap B=\emptyset$. Assume, towards a contradiction, that the set $A$ has the $\vec{\mathcal{U}}$-Baire property. Pick an open subset $U$ in the $\vec{\mathcal{U}}$-EP topology such that $A \, \Delta \, U$ is $\lambda$-meager in this topology. 
  Then Lemma \ref{lemma:SmallOpen} shows that there exists $W\in\mathcal{O}$ with $W\subseteq U$ and $U\setminus W$ nowhere dense. It follows that $A \, \Delta \, W$ is also $\lambda$-meager. 
  Another application of Lemma \ref{lemma:SmallOpen} then yields a  sequence $\seq{O_\alpha}{\alpha<\lambda}$ of dense elements of $\mathcal{O}$ with $A \, \Delta \, W\subseteq\bigcup_{\alpha<\lambda}(C(\vec{\lambda})\setminus O_\alpha)$. In this situation, there exists a    $\gamma<2^\lambda$ with $U_\gamma=W$ and $M_\gamma=\bigcup_{\alpha<\lambda}(C(\vec{\lambda})\setminus O_\alpha)$. 
  Then $U_\gamma\neq\emptyset$, because otherwise our construction would ensure that there is $x\in A\setminus W$ with $x\notin M_\gamma$. 
  But this means that there is $x\in B\cap U_\gamma$ with $x\notin M_\gamma$ and therefore $x\in A\cap B$, a contradiction.  
 \end{proof}

We now proceed by showing that, in the model constructed in the proof of Theorem \ref{theorem:NegativeI2Clambda}, the above constructions can also be used to find a simply definable set without the  $\vec{\mathcal{U}}$-Baire property:

\begin{thm}\label{theorem:Sigma1No-UBP}
  If $\map{j}{V}{M}$ is an I2-embedding whose critical sequence has supremum $\lambda$, then the following statements hold in an inner model:   
 \begin{enumerate-(i)}
  \item There is an I2-embedding whose critical sequence has supremum $\lambda$. 
  
  \item If $\vec{\lambda}=\seq{\lambda_n}{n<\omega}$ is a strictly increasing sequence of measurable cardinal with limit $\lambda$ and $\vec{\mathcal{U}}=\seq{U_n}{n<\omega}$ is a sequence  with the property that $U_n$ is a normal ultrafilter on $\lambda_n$ for all $n<\omega$, then there is a subset $z$ of $\lambda$ and a subset $X$ of $C(\vec{\lambda})$ such that $X$ does not have the $\vec{\mathcal{U}}$-Baire property  and the set $X$ is definable by a $\Sigma_1$-formula with parameter $z$. 
  \end{enumerate-(i)}
\end{thm}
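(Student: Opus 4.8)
The plan is to combine the inner-model technique from the proof of Theorem~\ref{theorem:NegativeI2Clambda} with the construction of a set without the Baire property from the proof of Theorem~\ref{theorem:NoBP}. So let $\map{j}{V}{M}$ be an I2-embedding, let $\seq{\kappa_n}{n<\omega}$ be its critical sequence and put $\lambda=\sup_{n<\omega}\kappa_n$. Since $\lambda$ is a limit of inaccessible cardinals, we may fix $y\subseteq\lambda$ with $V_\lambda\cup\{j\restriction V_\lambda\}\subseteq L[y]$ and set $N=L[y]$. Exactly as in the proof of Theorem~\ref{theorem:NegativeI2Clambda}, the map $j\restriction V_\lambda$ witnesses that statement (i) holds in $N$. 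Moreover, the restrictions $j_{n,n+1}\restriction V_\lambda$ of the iteration maps --- which are definable in $N$ from $j\restriction V_\lambda$ --- induce normal ultrafilters on the $\kappa_n$ in $N$, so the hypothesis of (ii) is realizable in $N$, and the basic theory of $L[y]$ yields $(2^\lambda)^N=\lambda^+$.

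Now work in $N$ and let $\vec{\lambda}=\seq{\lambda_n}{n<\omega}$ be an arbitrary strictly increasing sequence of measurable cardinals with limit $\lambda$, and let $\vec{\mathcal{U}}=\seq{U_n}{n<\omega}$ be a sequence with $U_n$ a normal ultrafilter on $\lambda_n$. Since $\vec{\lambda}$ and $\vec{\mathcal{U}}$ are coded by subsets of $\lambda$, we may, exactly as in the proof of Theorem~\ref{theorem:NegativeI2Clambda}, fix $z\subseteq\lambda$ such that the sets $\{y\}$, $\{\vec{\lambda}\}$ and $\{\vec{\mathcal{U}}\}$ are $\Delta_1$-definable from $z$ and the class of proper initial segments of $<_N$ is definable by a $\Sigma_1$-formula with parameter $z$. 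We then re-run the construction from the proof of Theorem~\ref{theorem:NoBP} inside $N$ \emph{canonically}: at each point where a choice is made, we take the $<_N$-least admissible object --- the $<_N$-least enumeration $\seq{\langle U_\gamma,M_\gamma\rangle}{\gamma<\lambda^+}$ of the admissible pairs, and, at stage $\gamma$, the $<_N$-least element of $C(\vec{\lambda})\setminus(B\cup M_\gamma)$ or of $U_\gamma\setminus(A\cup M_\gamma)$, as appropriate. This makes the construction deterministic.

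The key point is that all of the auxiliary notions occurring in the proof of Theorem~\ref{theorem:NoBP} --- the diagonal Prikry forcing $\P$ with $\vec{\mathcal{U}}$ together with its basic open sets $N_p$, the space $C(\vec{\lambda})$, membership in $\mathcal{O}$, density in the $\vec{\mathcal{U}}$-EP topology, the predicate ``$\lambda$-meager'', and the notion of an admissible pair $\langle U,M\rangle$ --- are $\Delta_1$-definable from $z$; here one uses that the relevant quantifiers may be taken bounded over the $\Delta_1(z)$-definable sets $V_\lambda$ and $C(\vec{\lambda})$, which is legitimate even though $\P$ and $\mathcal{O}$ have size $2^\lambda=\lambda^+$. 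Combining this with the $\Sigma_1(z)$-definability of the initial segments of $<_N$ and the bounded-quantifier manipulations used in the proof of Theorem~\ref{theorem:NegativeI2Clambda} --- a bounded universal quantifier applied to a $\Sigma_1$-formula is again $\Sigma_1$, and the negation of a $\Delta_1$-formula is $\Sigma_1$ --- the class of ``valid partial runs of the construction'' is $\Delta_1$-definable from $z$. Since the canonical choices make such runs unique and mutually coherent, $u\in X$ holds if and only if some valid partial run has added $u$ on the $A$-side, so the resulting set $X\subseteq C(\vec{\lambda})$ is definable by a $\Sigma_1$-formula with parameter $z$. As $N\models\ZFC$ and the $\lambda_n$ are measurable in $N$, the argument from the proof of Theorem~\ref{theorem:NoBP} applies verbatim inside $N$ and shows that $X$ does not have the $\vec{\mathcal{U}}$-Baire property. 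This proves (ii), and (i) has already been established in the first paragraph.

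The main obstacle is the definability bookkeeping of the third paragraph: one must check carefully that every ingredient of the proof of Theorem~\ref{theorem:NoBP} --- in particular the predicate ``$\lambda$-meager in the $\vec{\mathcal{U}}$-EP topology'' and membership in $\mathcal{O}$ --- admits a $\Delta_1$-definition from $z$ rather than merely a $\Sigma_1$-one, so that all the negations appearing in the definition of ``valid partial run'' remain $\Sigma_1$; this is exactly the level of care already exercised in the proof of Theorem~\ref{theorem:NegativeI2Clambda}. The $\lambda^+$-length recursion causes no difficulty, since any partial run, being a set, can serve as a witness in a $\Sigma_1$-formula; everything else is a routine transcription of the proofs of Theorems~\ref{theorem:NegativeI2Clambda} and~\ref{theorem:NoBP}.
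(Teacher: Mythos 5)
Your overall architecture (work in $L[y]$, obtain (i) as in Theorem \ref{theorem:NegativeI2Clambda}, then make the diagonalization of Theorem \ref{theorem:NoBP} canonical via the $L[y]$-well-ordering) matches the paper, but the definability bookkeeping has a genuine gap, and it sits exactly at the point you yourself flag as the main obstacle. You claim that density in the $\vec{\mathcal{U}}$-EP topology, $\lambda$-meagerness, and membership in the class of ``admissible pairs'' are $\Delta_1$-definable from $z$ because ``the relevant quantifiers may be taken bounded over the $\Delta_1(z)$-definable sets $V_\lambda$ and $C(\vec{\lambda})$''. That justification does not work. Saying that $U_Q$ is dense (equivalently, by Proposition \ref{proposition:PredenseDense}, that $Q$ is predense in $\P$) requires a universal quantifier over \emph{all} conditions of $\P$; conditions are not elements of $V_\lambda$ (they have rank at least $\lambda$), they are not elements of $C(\vec{\lambda})$, and neither $\{\P\}$, nor $\{\HHrm{\lambda^+}\}$, nor even $\{C(\vec{\lambda})\}$ is available as a $\Sigma_1(z)$-definable singleton with which to bound that quantifier: asserting that a candidate set $W$ contains \emph{every} condition (or every element of $\prod_{n<\omega}\lambda_n$) is itself a $\Pi_1$-type statement with no set-sized witness. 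This is unlike the situation exploited in Theorem \ref{theorem:NegativeI2Clambda}, where both $\WW\OO_\lambda$ and its complement have set-sized witnesses (an order isomorphism, respectively a descending $\omega$-sequence). Non-predensity of $Q$ does have a set witness (a single condition incompatible with every member of $Q$), but predensity does not, so your ``admissible pair'' predicate is $\Pi_1$-flavored; moreover your ``valid partial run'' relation must also assert that the chosen enumeration exhausts \emph{all} admissible pairs, which is again not visibly $\Sigma_1$ in $z$.

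The paper's proof is engineered precisely to avoid this issue, and this is the idea your proposal is missing. It never needs to define density or meagerness at all: it enumerates, along the $\Sigma_1(z)$-definable well-ordering $\lhd$, \emph{all} pairs $\langle P_\gamma,\seq{Q^\gamma_\alpha}{\alpha<\lambda}\rangle$ consisting of a set of at most $\lambda$-many conditions together with a $\lambda$-sequence of such sets --- a collection of elements of $\HHrm{\lambda^+}$ whose membership relation is $\Sigma_1(z)$-checkable, with no predensity requirement --- and it folds the predensity check into the recursion itself: at stage $\gamma$ the $\lhd$-least witness $d_\gamma$ is either of the form $\langle 0,p\rangle$ with $p$ a condition incompatible with every member of some $Q^\gamma_\alpha$ (a $\Sigma_1$ assertion, the universal quantifier being bounded by the set $Q^\gamma_\alpha$), or a point of $\bigcap_{\alpha<\lambda}U_{Q^\gamma_\alpha}$ (respectively of $U_{P_\gamma}\cap\bigcap_{\alpha<\lambda}U_{Q^\gamma_\alpha}$) avoiding the earlier choices; all quantifiers are bounded by sets at hand, so the recursion, and hence the set $A$, is $\Sigma_1$ in $z$. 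Density, Lemma \ref{lemma:SmallOpen} and Proposition \ref{proposition:PredenseDense} are then used only in the \emph{verification} that $A$ lacks the $\vec{\mathcal{U}}$-Baire property, outside the $\Sigma_1$ definition. If you replace your canonical re-run of Theorem \ref{theorem:NoBP} by this three-case recursion over the unrestricted combinatorial enumeration, the remainder of your argument goes through.
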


\begin{proof}
  As in the proof of Theorem \ref{theorem:NegativeI2Clambda}, pick a subset $y$ of $\lambda$ with $V_\lambda\cup\{j\restriction V_\lambda\}\subseteq L[y]$ and work in $L[y]$. Then there is an I2-embedding whose critical sequence has supremum $\lambda$. 
 Fix a strictly increasing sequence $\vec{\lambda}=\seq{\lambda_n}{n<\omega}$ of measurable cardinals with limit $\lambda$ and a sequence $\vec{\mathcal{U}}=\seq{U_n}{n<\omega}$ with the property that $U_n$ is a normal ultrafilter on $\lambda_n$ for all $n<\omega$. 
 We can now find an unbounded subset $z$ of $\lambda$ with the property that the $\{\vec{\mathcal{U}}\}$ is definable by a $\Sigma_1$-formula with parameter $z$ and there is a well-ordering $\lhd$ of $\HHrm{\lambda^+}$ of order-type $\lambda^+$ with the property that the set of all proper initial segments of $\lhd$ is definable by a $\Sigma_1$-formula with parameter $z$. 
 It then directly follows that the set $\{\vec{\mathcal{U}}\}$, the set of all conditions in $\P$, the ordering of $\P$, the compatibility relation of $\P$ and the incompatibility relation of $\P$ are all $\Delta_1$-definable from the parameter $z$.

 Now, define $\mathcal{O}$ to be the set of all pairs $\langle P,\vec{Q}\rangle$ with the property that $P$ is a set of at most $\lambda$-many conditions in $\P$ and $\vec{Q}=\seq{Q_\alpha}{\alpha<\lambda}$ is a sequence with the property that each $Q_\alpha$ is a set of at most $\lambda$-many conditions in $\P$. It is then easy to see that $\mathcal{O}$ is a subset of $\HHrm{\lambda^+}$ of cardinality  $\lambda^+$ that is definable by a $\Sigma_1$-formula with parameter $z$. 
 Let $\seq{\langle P_\gamma,\seq{Q^\gamma_\alpha}{\alpha<\lambda}\rangle}{\gamma<\lambda^+}$ denote the enumeration of $\mathcal{O}$ induced by $\lhd$. We then again know that this sequence is definable by a $\Sigma_1$-formula with parameter $z$. 
  Arguing as in the proof of Theorem \ref{theorem:NoBP}, we can now use Proposition \ref{proposition:PredenseDense} to show that for every $\gamma<\lambda^+$ with the property that  $Q^\gamma_\alpha$ is predense in $\P$ for all $\alpha<\lambda$, the set $\bigcap_{\alpha<\lambda}U_{Q^\gamma_\alpha}$ has cardinality $\lambda^+$.  
  Moreover, we know that for every $\gamma<\lambda^+$ with the property that $P_\gamma\neq\emptyset$ and $Q^\gamma_\alpha$ is predense in $\P$ for all $\alpha<\lambda$, the set $U_P\cap\bigcap_{\alpha<\lambda}U_{Q^\gamma_\alpha}$ has cardinality $\lambda^+$.  This shows that there is a unique sequence $\seq{d_\gamma}{\gamma<\lambda^+}$ with the property that for all $\gamma<\lambda^+$, the set $d_\gamma$ is the $\lhd$-least element of $\HHrm{\lambda^+}$ such that one of the following statements hold: 
  \begin{itemize}
   \item The set $d_\gamma$ is of the form $\langle 0,p\rangle$, where $p$ is a condition in $\P$ with the property that there exists an $\alpha<\lambda$ such that all conditions in $Q^\gamma_\alpha$ are incompatible with $p$ in $\P$. 
   
   \item $P_\gamma$ is the empty set and the set $d_\gamma$ is of the form $\langle 1,x\rangle$, where $x$ is an element of $\bigcap_{\alpha<\lambda}U_{Q^\gamma_\alpha}$ with the property that $x\neq v$ holds whenever $\beta<\gamma$ and $d_\beta$ is of the form $\langle 2,v\rangle$ for some $v$ in $C(\vec{\lambda})$. 
   
   \item The set $d_\gamma$ is of the form $\langle 2,x\rangle$, where $x$ is an element of $U_{P_\gamma}\cap\bigcap_{\alpha<\lambda}U_{Q^\gamma_\alpha}$ with the property that $x\neq u$ holds whenever $\beta<\gamma$ and $d_\beta$ is of the form $\langle 1,u\rangle$ for some $u$ in $C(\vec{\lambda})$. 
  \end{itemize}
  
  This definition then ensures that the sequence $\seq{d_\gamma}{\gamma<\lambda^+}$ is definable by a $\Sigma_1$-formula with parameter $z$. We define $$A ~ = ~ \Set{x\in C(\vec{\lambda})}{\exists\gamma<\lambda^+ ~ d_\gamma=\langle 1,x\rangle}.$$ Then $A$ is definable by a $\Sigma_1$-formula with parameter $z$ and, by repeating the computations made in the proof of Theorem \ref{theorem:NoBP}, we can show that $A$ does not have the $\vec{\mathcal{U}}$-Baire property. 
\end{proof}

Contrary to the perfect set property case, there are no previous results about the possibility of $\Sigma_1$- or $\Sigma^1_2$-definable sets to have this kind of regularity  property. In the following, we will again  focus on the structural consequences of large cardinal assumptions close to the Kunen inconsistency. 
The following lemma will allow us to prove  an analogue to Theorem \ref{thm:Main1PlusParameters} for the $\vec{\mathcal{U}}$-Baire property:

\begin{lem}\label{lemma:GenericsComeager}
 Let $\vec{\lambda}=\seq{\lambda_n}{n<\omega}$ be a strictly increasing sequence of measurable cardinals with supremum $\lambda$ and let $N$ be an inner model of $\ZFC$ with $V_\lambda\cup\{\vec{\lambda}\}\subseteq N$ and $(2^\lambda)^N<\lambda^+$. 
 If $\vec{\mathcal{U}}=\seq{U_n}{n<\omega}$ is a sequence in $N$ with the property that  $U_n$ is a normal ultrafilter on $\lambda_n$ for all $n<\omega$ and $$C ~ = ~  \Set{x \in C(\lb)}{\anf{\textit{$x$ is $\mathbb{P}_{\vec{\mathcal{U}}}^N$-generic over $N$}}},$$ then $C$ is \(\lb\)-comeager in  the $\vec{\mathcal{U}}$-EP topology. 
\end{lem}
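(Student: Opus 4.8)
The plan is to reduce the claim to a Mathias-type characterisation of $\P^N$-genericity and then exhibit $C$ as an intersection of at most $\lambda$-many dense open subsets of the $\vec{\mathcal{U}}$-EP topology. I would begin by recording the structural consequences of the hypothesis $V_\lambda\cup\{\vec{\lambda}\}\subseteq N$. Since $\lambda_n+2<\lambda$ for all $n<\omega$, we have $\POT{\lambda_n}\subseteq V_\lambda\subseteq N$, so $N$ and $V$ contain exactly the same subsets of each $\lambda_n$. Together with $\vec{\mathcal{U}}\in N$, this shows that $N$ satisfies ``$\vec{\mathcal{U}}$ is a sequence of normal ultrafilters on $\seq{\lambda_n}{n<\omega}$'' (completeness and normality transfer down to $N$ because the witnessing objects, namely subsets of and regressive functions on $\lambda_n$, already lie in $N$). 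Hence $\P^N$ is a genuine diagonal Prikry forcing as computed in $N$, it is a suborder of $\P^V$ associated with the same basic open sets $N_p$, and the Strong Prikry condition holds for $\P^N$ inside $N$.

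The core of the proof is the following characterisation, which I would establish and which is the Mathias criterion for diagonal Prikry forcing (cf.\ {\cite[Section~1.3]{MR2768695}}): for $x\in C(\vec{\lambda})$, the filter $G_x=\Set{p\in\P^N}{x\in N_p}$ is $\P^N$-generic over $N$ if and only if $x(n)\in A_n$ for all but finitely many $n$, for every sequence $\vec{A}=\seq{A_n}{n<\omega}\in N$ with $A_n\in U_n$ for all $n$. The forward direction is immediate: the set of $p\in\P^N$ with $A^p_i\subseteq A_i$ for all $i\geq\lh(p)$ is dense (in fact $\leq^{*}_{\P}$-dense, as $\seq{A^p_i\cap A_i}{i<\omega}\in N$ whenever $p\in\P^N$), so $G_x$ meets it. For the converse, given a dense open $D\in N$, I would apply the Strong Prikry condition inside $N$ to $D$ and, for each finite $s$ with $s(i)<\lambda_i$, to the condition with stem $s$ and all tails $\lambda_i$, obtaining uniformly a direct extension $q_s$ (with stem $s$ and tails $B^s_i\in U_i$) and a number $n_s$ such that every $r\leq_{\P}q_s$ with $\lh(r)\geq n_s$ lies in $D$. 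Then $B^*_i=\bigcap\Set{B^s_i}{\lh(s)\leq i}$ lies in $U_i$ in $N$ — here one uses that $U_i$ is $\lambda_i$-complete and that there are fewer than $\lambda_i$ finite sequences $s$ of length at most $i$ — and $\seq{B^*_i}{i<\omega}\in N$. Feeding $\seq{B^*_i}{i<\omega}$ into the criterion yields an $m$ with $x(i)\in B^*_i$ for all $i\geq m$, and then, setting $k=\max(m,n_{x\restriction m})$, the condition with stem $x\restriction k$ and tails $\seq{B^{x\restriction m}_i}{i\geq k}$ is a member of $D$ lying in $G_x$.

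To cash this in, put $S=\Set{\seq{A_n}{n<\omega}\in N}{\forall n\ A_n\in U_n}$ and, for $\vec{A}\in S$, $O_{\vec{A}}=\Set{x\in C(\vec{\lambda})}{\exists m\ \forall n\geq m\ x(n)\in A_n}$; the characterisation gives $C=\bigcap_{\vec{A}\in S}O_{\vec{A}}$. Each $O_{\vec{A}}$ equals $\bigcup\Set{N_p}{p\in\P^N,\ A^p_i=A_i\text{ for all }i\geq\lh(p)}$, hence is open in the $\vec{\mathcal{U}}$-EP topology, and it is dense, since given any $p\in\P^V$ one may pick $y\in C(\vec{\lambda})$ agreeing with the stem of $p$ below $\lh(p)$ and satisfying $y(i)\in A_i\cap A^p_i$ for all $i\geq\lh(p)$ — possible because both sets lie in $U_i$ — so that $y\in N_p\cap O_{\vec{A}}$. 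The hypothesis $(2^\lambda)^N<\lambda^+$ enters precisely in bounding $|S|$: in $N$ the set $S$ is a subset of $\prod_{n<\omega}\POT{\lambda_n}$, which has cardinality at most $\big((2^\lambda)^\omega\big)^N=(2^\lambda)^N<(\lambda^+)^V$, so $S$ has cardinality at most $\lambda$ in $V$. Consequently $C$ is an intersection of at most $\lambda$-many dense open subsets of the $\vec{\mathcal{U}}$-EP topology, and is therefore $\lambda$-comeager.

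I expect the delicate point to be the converse direction of the Mathias criterion, where the outputs of the Strong Prikry condition over all stems must be amalgamated into one measure-one sequence that still lies in $N$; this is exactly where the $\lambda_n$-completeness of the ultrafilters and the agreement of $\POT{\lambda_n}$ in $N$ and in $V$ are indispensable, and everything after that is bookkeeping.
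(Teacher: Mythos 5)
Your argument is correct and takes essentially the same route as the paper: both identify $C$, via the Mathias criterion for diagonal Prikry forcing, with the intersection of the dense open sets $\Set{x\in\Clb}{\exists m<\omega ~ \forall m\leq n<\omega ~ x(n)\in A_n}$ for measure-one sequences $\vec{A}\in N$, and both use $(2^\lambda)^N<\lambda^+$ to bound the number of such sets by $\lambda$, so that $C$ is $\lambda$-comeager by definition. The only difference is that the paper cites the Mathias condition for diagonal Prikry forcing as a known result, whereas you derive it from the Strong Prikry condition; your amalgamation of the direct extensions over all stems of length at most $i$ using the $\lambda_i$-completeness of $U_i$ (and the agreement of $\POT{\lambda_i}$ between $N$ and $V$) is sound.
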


\begin{proof}
     By the \emph{Mathias condition} for the diagonal Prikry forcing (see {\cite{MR2193185}}), the set $C$ consists of all $x\in C(\vec{\lambda})$ with the property that for every sequence $\vec{A}= \seq{A_n \in U_n}{n < \omega}$ in $N$, the function $x$   belongs to the  dense open set $$\Set{x \in \Clb}{\exists m < \omega ~  \forall m\leq n<\omega ~ x(n) \in A_n}.$$ 
 Since $(2^\lambda)^N<\lambda^+$, there are only $\lambda$-many dense open sets of this form and Proposition \ref{proposition:LambdaBaireSpace} yields the desired conclusion.  
\end{proof}

We are now ready to prove our analogue to Theorem \ref{thm:Main1PlusParameters}:

\begin{thm}\label{thm:BP for lightface}
Let $\map{j}{V}{M}$ be an I2-elementary embedding  with \(\lb\) being the supremum of its critical sequence $\vec{\lb}=\seq{\lb_n}{n < \o}$ and let $N$ be an inner model of $\ZFC$ with $M^j_\omega\cup\{\vec{\lambda}\}\subseteq N$ and $(2^\lambda)^N<\lambda^+$. 
 Then there exists a sequence \(\vec{\mathcal{F}}=\seq{F_n}{n<\omega}$ in $N$ such that each \(F_n\) is a normal ultrafilter on \(\lb_n\) and every subset of \(\Clb\) that is definable over $V_\lambda$ by a $\Sigma^1_2$-formula with parameters in $V^N_{\lambda+1}$   has the $\vec{\mathcal{F}}$-Baire property.
\end{thm}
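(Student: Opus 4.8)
The plan is to adapt the classical Solovay-style argument showing that every $\mathbf{\Sigma}^1_2$-set of reals has the Baire property once sufficiently many Cohen generics exist, with the diagonal Prikry forcing computed in $N$ taking over the role of Cohen forcing.

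First I would produce the sequence $\vec{\mathcal{F}}$. Since $V_\lambda\subseteq M^j_\omega$, and every normal ultrafilter on $\lambda_n$ — together with every regressive function on $\lambda_n$, every $({<}\lambda_n)$-sequence of subsets of $\lambda_n$, and every $\lambda_n$-indexed sequence of subsets of $\lambda_n$ — is an element of $V_{\lambda_n+\omega}\subseteq V_\lambda$, the statement ``$U$ is a normal ultrafilter on $\lambda_n$'' is absolute between $V$ and any inner model containing $V_\lambda$. In particular $M^j_\omega$ is a model of $\ZFC$ in which each $\lambda_n$ is measurable, so $M^j_\omega$ contains a sequence $\vec{\mathcal{F}}=\seq{F_n}{n<\omega}$ such that each $F_n$ is a normal ultrafilter on $\lambda_n$; as $M^j_\omega\subseteq N$, this sequence lies in $N$, and each $F_n$ remains a normal ultrafilter on $\lambda_n$ in $N$ and in $V$. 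Since $V_\lambda\cup\{\vec{\lambda}\}\subseteq N$ and $(2^\lambda)^N<\lambda^+$, Lemma \ref{lemma:GenericsComeager} applies to $\vec{\mathcal{F}}$ and $N$: the set $C$ of all $x\in C(\vec{\lambda})$ that are $\mathbb{P}_{\vec{\mathcal{F}}}^{N}$-generic over $N$ is $\lambda$-comeager in the $\vec{\mathcal{F}}$-EP topology.

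Now fix a subset $X$ of $C(\vec{\lambda})$ that is definable over $V_\lambda$ by a $\Sigma^1_2$-formula $\psi$ with parameters $\vec{p}\in V_{\lambda+1}^N$. Working in $N$, write $\mathbb{P}=\mathbb{P}_{\vec{\mathcal{F}}}^{N}$, let $\dot x$ be the canonical $\mathbb{P}$-name for the generic element of $C(\vec{\lambda})$ that $\mathbb{P}$ produces, and let $\dot X$ be the canonical name for the subset of $C(\vec{\lambda})$ defined over $V_\lambda$ by $\psi(\cdot,\vec{p})$ in the generic extension. Set
\[
 W ~ = ~ \bigcup\Set{N_p}{p\in\mathbb{P}, ~ p\Vdash^N_{\mathbb{P}}\dot x\in\dot X}.
\]
Because each $F_n$ is literally the same set in $N$ and in $V$, $\mathbb{P}$ is a suborder of the forcing $\mathbb{P}_{\vec{\mathcal{F}}}$ computed in $V$, so each $N_p$ with $p\in\mathbb{P}$ is a basic open set of the $\vec{\mathcal{F}}$-EP topology and $W$ is open in that topology.

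The crucial step is transferring $\Sigma^1_2$-correctness to Prikry extensions. For $x\in C$, the model $N[x]=N[G_x]$, where $G_x=\Set{p\in\mathbb{P}}{x\in N_p}$ is the $N$-generic filter determined by $x$, is a set-forcing extension of $N$, hence an inner model of $\ZFC$, and it satisfies $M^j_\omega\cup\{\vec{\lambda}\}\subseteq N\subseteq N[x]$. Therefore Corollary \ref{cor:Sigma12correctness}, applied with $N[x]$ in place of $N$, shows that ``$\langle V_\lambda,\in\rangle\models\psi(x,\vec{p})$'' is absolute between $V$ and $N[x]$; equivalently, $x\in X$ if and only if $x\in\dot X[G_x]$. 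Combining this with the forcing theorem over $N$, for every $x\in C$ we obtain
\[
 x\in X ~ \Longleftrightarrow ~ x\in\dot X[G_x] ~ \Longleftrightarrow ~ \exists p\in G_x ~ p\Vdash^N_{\mathbb{P}}\dot x\in\dot X ~ \Longleftrightarrow ~ x\in W.
\]
Hence $X\,\Delta\,W\subseteq C(\vec{\lambda})\setminus C$, which is $\lambda$-meager in the $\vec{\mathcal{F}}$-EP topology because $C$ is $\lambda$-comeager; so $X\,\Delta\,W$ is $\lambda$-meager and $X$ has the $\vec{\mathcal{F}}$-Baire property.

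I expect the delicate points to be, first, the bookkeeping ensuring that the Prikry extension $N[x]$ really satisfies the hypotheses of Corollary \ref{cor:Sigma12correctness} while the interpretation of $V_\lambda$, of the formula $\psi$, and of the parameters $\vec{p}$ is unchanged in passing from $N$ to $N[x]$ — here one must also fix once and for all a $\Delta_0$ coding of elements of $C(\vec{\lambda})$ as parameters for $\psi$ — and, second, the interaction between the $\vec{\mathcal{F}}$-EP topology (computed in $V$) and the forcing $\mathbb{P}_{\vec{\mathcal{F}}}^{N}$ (computed in $N$): one needs that $\mathbb{P}$-generic filters over $N$ are exactly the $G_x$ for $x\in C$ and that $W$, being a union of basic open sets of the $V$-topology, is genuinely open. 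These facts rest on the Mathias criterion for diagonal Prikry forcing, on Proposition \ref{proposition:PredenseDense}, and on the analysis of this topology carried out in \cite{Di23}.
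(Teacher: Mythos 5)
Your argument is essentially the paper's own proof: you use Lemma \ref{lemma:GenericsComeager} to see that the set $C$ of $\mathbb{P}_{\vec{\mathcal{F}}}^N$-generic elements of $C(\vec{\lambda})$ is $\lambda$-comeager in the $\vec{\mathcal{F}}$-EP topology, you take $W$ to be the union of the sets $N_p$ over the conditions $p\in\mathbb{P}_{\vec{\mathcal{F}}}^N$ forcing the defining $\Sigma^1_2$-statement about the generic sequence, and you apply Corollary \ref{cor:Sigma12correctness} to the inner models $N[x]$ for $x\in C$ in both directions to conclude that $X\,\Delta\,W\subseteq C(\vec{\lambda})\setminus C$. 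This is exactly the structure of the proof of Theorem \ref{thm:BP for lightface} in the paper.

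The one step that fails is your construction of $\vec{\mathcal{F}}$: you claim that $M^j_\omega$ contains a sequence $\seq{F_n}{n<\omega}$ such that each $F_n$ is a normal ultrafilter on $\lambda_n$. No such sequence can lie in $M^j_\omega$. Indeed, $\lambda=j_{0,\omega}(\lambda_0)$ is measurable, hence regular, in $M^j_\omega$, so the cofinal $\omega$-sequence $\vec{\lambda}$ is not an element of $M^j_\omega$ (this is precisely why the paper regards $\vec{\lambda}$ as Prikry-generic over $M^j_\omega$ and passes to $M^j_\omega[\vec{\lambda}]$); but from any sequence $\seq{F_n}{n<\omega}$ as above one recovers $\vec{\lambda}$, for instance via $\lambda_n=\bigcup F_n$, so such a sequence cannot belong to $M^j_\omega$. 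Each individual $F_n$ does lie in $V_\lambda\subseteq M^j_\omega$; it is only the $\omega$-sequence that escapes. The repair is immediate and is what the paper does: since $V_\lambda\cup\{\vec{\lambda}\}\subseteq N$ and, by the absoluteness you correctly note, each $\lambda_n$ is measurable in $N$, the model $N$ itself can choose $\vec{\mathcal{F}}=\seq{F_n}{n<\omega}\in N$. The remainder of your argument only ever uses $\vec{\mathcal{F}}\in N$ (together with the fact that conditions of $\mathbb{P}_{\vec{\mathcal{F}}}^N$ are conditions of $\mathbb{P}_{\vec{\mathcal{F}}}^V$), so with this change it goes through unchanged.
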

	
\begin{proof}
 Since $V_\lambda\subseteq M^j_\omega\subseteq N$, $\vec{\lambda}\in N$ and each $\lambda_n$ is a measurable cardinal in $N$, we can pick a sequence \(\vec{\mathcal{F}}=\seq{F_n}{n<\omega}$ in $N$ such that each \(F_n\) is a normal ultrafilter on \(\lb_n\). Note that every condition in $\mathbb{P}_{\vec{\mathcal{F}}}^N$ is a condition in $\mathbb{P}_{\vec{\mathcal{F}}}^V$. In $V$, we define $$C ~ = ~  \Set{x \in C(\lb)}{\anf{\textit{$x$ is $\mathbb{P}_{\vec{\mathcal{F}}}^N$-generic over $N$}}}.$$ Then Lemma \ref{lemma:GenericsComeager} shows that   \(C\) is \(\lb\)-comeager in  the $\vec{\mathcal{F}}$-EP topology.

    Fix a $\Sigma^1_2$-formula $\varphi(w_0,w_1)$ with second-order variables $w_0$ and $w_1$ and $B\in V_{\lambda+1}^N$ such that the set $$X ~ = ~ \Set{A\in V_{\lambda+1}}{\langle V_\lambda,\in\rangle\models\varphi(A,B)}$$ is a subset of $C(\vec{\lambda})$.  
    Define $O$ to be the set of all conditions $p$ in $\mathbb{P}_{\vec{\mathcal{F}}}^N$ with 
    \begin{equation}\label{equation:ForcingStatement}
     p \Vdash^N_{\mathbb{P}_{\vec{\mathcal{F}}}^N}\anf{\langle V_{\check{\lambda}},\in\rangle\models\varphi(\dot{x},\check{B})},
    \end{equation} 
      where $\dot{x}$ denotes the canonical $\mathbb{P}_{\vec{\mathcal{F}}}^N$-name  for the generic sequence in $N$.

 Work in $V$ and define $U$ to be the union of all sets of the form $N_p$ with $p\in O$. Fix $x\in C$. 
 First, assume that $x\in U$ and fix $p\in O$ with $x\in N_p$. Since $$G_x ~ = ~ \Set{p\in \mathbb{P}_{\vec{\mathcal{F}}}^N}{x\in N_p}$$ is the filter on $\mathbb{P}_{\vec{\mathcal{F}}}^N$ induced by $x$, we then know that  
  \begin{equation}\label{equation:DefiningFormula}
  \langle V_\lambda,\in\rangle\models\varphi(x,B)
 \end{equation}
  holds in $N[x]$ and therefore Corollary \ref{cor:Sigma12correctness} shows that $x$ is an element of $X$. In the other direction, assume that  $x\in X$. Then \eqref{equation:DefiningFormula} holds in $V$ and Corollary \ref{cor:Sigma12correctness} ensures that this statement also holds in $N[x]$. Then there is a condition $p$ in $G_x$ with the property that \eqref{equation:ForcingStatement} holds. But then $p\in O$, $x\in N_p$ and hence $x\in U$. 
  These computations now show that the sets $U$ and $C(\vec{\lambda})\setminus C$ witness that $X$ has the $\vec{\mathcal{F}}$-Baire property.  
\end{proof}

 A quick analysis of the proof shows that the consequences of the above theorem hold for every $\vec{\mathcal{F}}\in N$.

 Note that, since $(2^\lambda)^{M^j_\omega[\vec{\lambda}]}<\lambda^+$ holds in the situation of the above theorem, there exists a sequence $\vec{\mathcal{F}}$ of normal measures such that every subset of \(\Clb\) that is definable over $V_\lambda$ by a $\Sigma^1_2$-formula with parameters in $V_\lambda\cup\{\vec{\lambda}\}$   has the $\vec{\mathcal{F}}$-Baire property. 

 In the remainder of this paper, we study the interaction  of I0-embeddings with the $\lambda$-Baire property of families of sets. 
  One of the key ingredients of the proof of Theorem \ref{thm:BP for lightface} is Corollary \ref{cor:Sigma12correctness}, that states that there is a certain amount of absoluteness between $V$ and models that contain \(M_\o^j[\vec{\lambda}]\).   Woodin and Cramer  proved that  I0-embeddings also entail absoluteness-like results.
  
  Remember that, given a limit ordinal $\lambda$, we define $$\Theta^{L(V_{\lambda+1})} ~ = ~ \sup\Set{\alpha\in\Ord}{\textit{There is a surjection $\map{\pi}{V_{\lambda+1}}{\alpha}$ in $L(V_{\lambda+1})$}}.$$ 
  This concept generalizes the definition of $\Theta$ for $L(\mathbb{R})$. Since $L(\mathbb{R})$  is not going to appear in this paper and there is no risk of confusion, we will below write $\Theta$ instead of $\Theta^{L(V_{\lambda+1})}$. 
  An ordinal $\alpha<\Theta$ is  called \emph{good} if every element of $L_\alpha(V_{\lb+1})$ is definable over $L_\alpha(V_{\lb+1})$ from an element of $V_{\lb+1}$. 
 The next theorem is called  \emph{Generic Absoluteness} in \cite{Woo11}:

\begin{thm}[Woodin, {\cite[Theorem 82]{Cra17}}]\label{thm:Cramer} 
  Let $\map{j}{L(V_{\lambda+1})}{L(V_{\lambda+1})}$ be an I0-embedding that is $\omega$-iterable and let 
   $\map{j_{0,\o}}{L(V_{\lb+1})}{M_\o}$ be the embedding into the $\o$-th iterate of $L(V_{\lb+1})$ by $j$. 
   Assume that $\mathbb{P}\in M_\omega$ is a partial order and $g\in V$ is $\mathbb{P}$-generic over $M_\omega$ with $\cof{\lambda}^{M_\omega[g]}=\omega$. 
   If $\alpha<\Theta$ is good, then for some $\bar \alpha < \lb$, there is an elementary embedding $$\map{\pi}{L_{\bar \alpha}(M_\omega[g]\cap V_{\lb+1})}{L_\alpha(V_{\lb+1})}$$ that is the identity below $\lb$. 
\end{thm}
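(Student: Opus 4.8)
The plan is to reproduce Woodin's argument (carried out in detail in \cite{Cra17}), which combines the $\omega$-iterability of $j$ with the reflection that \emph{goodness} imposes on the levels $L_\alpha(V_{\lambda+1})$. First I would assemble the structural facts about the $\omega$-th iterate. Since $j$ is $\omega$-iterable, $M_\omega$ is well-founded, and the standard analysis of rank-into-rank iterations gives $\crit{j_{0,\omega}}=\lambda_0$, $j_{0,\omega}(\lambda_0)=\lambda$, and $M_\omega=L(V_{\lambda+1}^{M_\omega})$ with $V_\lambda\subseteq V_{\lambda+1}^{M_\omega}\subsetneq V_{\lambda+1}$; in particular $\lambda$ is measurable in $M_\omega$, and elementarity of $j_{0,\omega}$ transports the goodness structure of $L(V_{\lambda+1})$ to $M_\omega$, mapping good ordinals below $\Theta$ to good ordinals of $M_\omega$. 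Passing to the extension, $W:=M_\omega[g]\cap V_{\lambda+1}$ is a set with $V_\lambda\subseteq W\subseteq V_{\lambda+1}$, and the hypothesis $\cof{\lambda}^{M_\omega[g]}=\omega$ supplies, inside $M_\omega[g]$, a cofinal $\omega$-sequence in $\lambda$ — exactly the datum needed to drive the inverse-limit reflection of \cite{Cra17} relative to $W$.

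The heart of the argument is to use goodness to \emph{name} elements. Fix a good $\alpha<\Theta$; by definition there is a surjection $\map{F}{V_{\lambda+1}}{L_\alpha(V_{\lambda+1})}$ definable over $L_\alpha(V_{\lambda+1})$, so each $x\in L_\alpha(V_{\lambda+1})$ is the unique $y$ with $L_\alpha(V_{\lambda+1})\models\varphi(y,a)$ for some first-order $\varphi$ and some $a\in V_{\lambda+1}$. The step I expect to be the main obstacle is to produce $\bar\alpha<\lambda$ for which the structure $L_{\bar\alpha}(W)$ is good over $W$ in the same sense \emph{and} for which the tower $\seq{M_n}{n<\omega}$, the inverse limits of the maps $j_{m,n}$, and the cofinal sequence supplied by $g$ jointly witness the reflection of theories: for every $\varphi$ and every tuple $\bar a$ from $W$, $$L_{\bar\alpha}(W)\models\varphi(\bar a)\quad\Longleftrightarrow\quad L_\alpha(V_{\lambda+1})\models\varphi(\bar a).$$ This is precisely Woodin's Generic Absoluteness, and the crux is that, from the standpoint of $M_\omega$, the true $V_{\lambda+1}$ together with the critical sequence $\vec{\lambda}$ is itself ``generic-like'' (a Prikry-type object at the measurable $\lambda=j_{0,\omega}(\lambda_0)$ of $M_\omega$); one therefore proves the reflection first for this canonical object coming out of the iteration and then transfers it to an arbitrary $\mathbb{P}$-generic $g$ with $\cof{\lambda}^{M_\omega[g]}=\omega$ by a homogeneity argument on $\mathbb{P}$, using that any two such generics compute the same $W$-indexed theories at the good levels.

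Granting the reflection, $\pi$ is defined by sending the element of $L_{\bar\alpha}(W)$ named by a pair $(\varphi,a)$ with $a\in W\subseteq V_{\lambda+1}$ to the element of $L_\alpha(V_{\lambda+1})$ named by the same pair; the displayed equivalence, applied to formulas expressing ``$(\varphi,a)$ names $y$'', makes $\pi$ total, well defined, and elementary. Since every member of $V_\lambda$ is named by a $\Delta_0$-formula with a parameter in $V_\lambda\subseteq W$, the map $\pi$ fixes $V_\lambda$ pointwise, so it is the identity below $\lambda$; and the fact that $\bar\alpha<\lambda$ can be arranged, rather than merely $\bar\alpha<\lambda^+$, falls out of the smallness of $W$ and the goodness transfer, which together bound the ordinal height over $W$ needed to realize the relevant types. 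Everything outside the reflection step — computing $M_\omega$, verifying the naming scheme, and tracking parameters — reduces to Kunen's analysis of the critical sequence and the definition of goodness, which I would treat as routine; the genuinely hard point remains interleaving the tower of iterates with the externally given generic $g$ and obtaining \emph{full} (rather than bounded) elementarity of $\pi$.
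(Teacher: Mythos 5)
This theorem is not proved in the paper at all: it is imported as a black box from the literature (Woodin's Generic Absoluteness, cited as {\cite[Theorem 82]{Cra17}}), so there is no in-paper argument to compare yours against; the only way to "prove" it here would be to actually reproduce the Woodin--Cramer argument. Measured against that standard, your proposal has a genuine gap, and it sits exactly at the point you flag as the main obstacle. The displayed theory reflection between $L_{\bar\alpha}(M_\omega[g]\cap V_{\lambda+1})$ and $L_\alpha(V_{\lambda+1})$ \emph{is} the content of the theorem (once one has it, reading off $\pi$ from the goodness naming scheme is routine), and your sketch establishes it by writing "this is precisely Woodin's Generic Absoluteness" and then "granting the reflection" --- that is circular. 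The substance of the real proof lies precisely in this step and uses machinery absent from your outline: Cramer's inverse limit reflection / the analysis of how $V_{\lambda+1}$ and suitable generics sit over the iterates $M_n$ and $M_\omega$, together with the fact that good ordinals admit definable surjections whose theories can be pulled back along the iteration. Nothing in your sketch produces $\bar\alpha<\lambda$ (your appeal to "smallness of $W$" is not an argument), nor full elementarity.

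A second, concrete misstep is the proposed transfer from the "canonical" Prikry-like object arising from the critical sequence to an arbitrary generic $g$ "by a homogeneity argument on $\mathbb{P}$". The hypothesis is only that $\mathbb{P}\in M_\omega$ is \emph{some} partial order and $g$ is $\mathbb{P}$-generic over $M_\omega$ with $\cof{\lambda}^{M_\omega[g]}=\omega$; no homogeneity of $\mathbb{P}$ is available, and two different generics for different posets need not "compute the same $W$-indexed theories" for any reason you have given --- indeed $W=M_\omega[g]\cap V_{\lambda+1}$ itself depends on $g$, so there is no single structure to compare across generics. The surrounding bookkeeping in your first and last paragraphs ($j_{0,\omega}(\lambda_0)=\lambda$, $M_\omega=L(V_{\lambda+1}^{M_\omega})$, $\pi$ fixing $V_\lambda$ pointwise because elements of $V_\lambda$ are named from parameters in $V_\lambda\subseteq W$) is fine, but it is exactly the part that was never in doubt. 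As written, the proposal is a plausible reading guide to \cite{Cra17}, not a proof.
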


Note that, in the situation of the above theorem the good ordinals are cofinal in $\Theta$ (see \cite{La01}). 
Moreover, if there exists an I0-embedding, then there exists an iterable I0-embedding (see \cite[Lemma 10, Lemma 21]{Woo11}).  Therefore, the hypothesis of the above result is not restrictive.

\begin{thm}\label{thm:I0-lightface-BP}
 Let $\map{j}{L(V_{\lambda+1})}{L(V_{\lambda+1})}$ be an I0-embedding with critical sequence $\vec{\lambda}=\seq{\lambda_n}{n<\omega}$. 
  Then there exists a sequence $\vec{\mathcal{F}}=\seq{F_n}{n<\omega}$ such that  each $F_n$ is a normal ultrafilter on $\lambda_n$ and every  subset of $\Clb$ that is definable over $V_\lambda$ by a $\Sigma^1_n$-formula with parameters in $V_{\lb+1}$  has the $\vec{\mathcal{F}}$-Baire property.  
\end{thm}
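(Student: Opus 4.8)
The strategy is to replay the proof of Theorem~\ref{thm:BP for lightface} almost verbatim, with Woodin's Generic Absoluteness (Theorem~\ref{thm:Cramer}) taking over the role played there by Corollary~\ref{cor:Sigma12correctness}. Since Corollary~\ref{cor:Sigma12correctness} gave $\Sigma^1_2$-correctness between $V$ and inner models containing $M^j_\omega\cup\{\vec{\lambda}\}$, while Theorem~\ref{thm:Cramer} provides, under an I0-embedding, the analogous correctness for \emph{all} finite levels $\Sigma^1_n$, this upgrade is exactly what is needed to pass from $\Sigma^1_2$-definable to $\Sigma^1_n$-definable sets.

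First I would invoke \cite[Lemma~10, Lemma~21]{Woo11} to pass to an $\omega$-iterable I0-embedding, and let $\map{j_{0,\omega}}{L(V_{\lambda+1})}{M_\omega}$ be the embedding into the $\omega$-th iterate. As in the I2 case, $V_\lambda\subseteq M_\omega$, $j_{0,\omega}(\lambda)=\lambda$, and since every normal ultrafilter on $\lambda_n$ has rank below $\lambda$, each $\lambda_n$ is measurable in $M_\omega$. Arguing as at the start of Section~\ref{section:Positive} (via the Mathias criterion), I would then fix a $\ZFC$ inner model $N$ with $M_\omega\cup\{\vec{\lambda}\}\subseteq N$, $(2^\lambda)^N<\lambda^+$ and $\cof{\lambda}^N=\omega$, together with a sequence $\vec{\mathcal{F}}=\seq{F_n}{n<\omega}\in N$ of normal ultrafilters on the $\lambda_n$, arranged so that $N$ is a set-generic extension of $M_\omega$ and, for every further $\mathbb{P}_{\vec{\mathcal{F}}}^N$-generic $x$ over $N$, the extension $N[x]$ is of the form $M_\omega[g]$ for a generic $g\in V$ over $M_\omega$ with $\cof{\lambda}^{M_\omega[g]}=\omega$ --- precisely the configuration to which Theorem~\ref{thm:Cramer} applies. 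Lemma~\ref{lemma:GenericsComeager} then shows that $$C ~ = ~ \Set{x\in\Clb}{\anf{\textit{$x$ is $\mathbb{P}_{\vec{\mathcal{F}}}^N$-generic over $N$}}}$$ is \(\lb\)-comeager in the $\vec{\mathcal{F}}$-EP topology.

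Given a $\Sigma^1_n$-formula $\varphi(w_0,w_1)$ with second-order free variables and a parameter $B\in V_{\lambda+1}$ such that $X=\Set{A\in V_{\lambda+1}}{\langle V_\lambda,\in\rangle\models\varphi(A,B)}$ is contained in $\Clb$, I would, following the proof of Theorem~\ref{thm:BP for lightface}, let $O$ be the set of conditions in $\mathbb{P}_{\vec{\mathcal{F}}}^N$ that force over $N$ the statement $\langle V_{\check\lambda},\in\rangle\models\varphi(\dot x,\check B)$ (where $\dot x$ names the generic sequence) and set $U=\bigcup\Set{N_p}{p\in O}$. The heart of the argument is to check that, for $x\in C$, one has $x\in X$ if and only if $x\in U$: in one direction, genericity of the filter induced by $x$ together with the forcing theorem gives $\langle V_\lambda,\in\rangle\models\varphi(x,B)$ in $N[x]$, which Generic Absoluteness transfers to $V$; in the other, one transfers $\langle V_\lambda,\in\rangle\models\varphi(x,B)$ from $V$ down to $N[x]$ and extracts a witnessing condition from the filter induced by $x$. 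Then $U$ and $\Clb\setminus C$ witness that $X$ has the $\vec{\mathcal{F}}$-Baire property, and since $\varphi$ and $B$ were arbitrary, the theorem follows.

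The main obstacle is the absoluteness step, because Theorem~\ref{thm:Cramer} is not phrased as a correctness statement: it only provides, for each good ordinal $\alpha<\Theta$, an elementary embedding $\map{\pi}{L_{\bar\alpha}(M_\omega[g]\cap V_{\lambda+1})}{L_\alpha(V_{\lambda+1})}$ for some $\bar\alpha<\lambda$ that is the identity below $\lambda$. Turning this into ``$\langle V_\lambda,\in\rangle\models\varphi(x,B)$ is absolute between $N[x]=M_\omega[g]$ and $V$'' requires two things. First, one must choose $\alpha$ good and large enough that $L_\alpha(V_{\lambda+1})$ correctly evaluates the $\Sigma^1_n$-statement over $V_\lambda$; this is where the complexity parameter $n$ enters, and it is harmless since the good ordinals are cofinal in $\Theta$ (as recorded after Theorem~\ref{thm:Cramer}). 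Second --- and this is the technical core --- one must handle an \emph{arbitrary} $V_{\lambda+1}$-parameter $B$. The favourable fact is that $\pi$ being the identity below $\lambda$ forces, by elementarity (since any $A\subseteq V_\lambda$ in the domain satisfies $\pi(A)\subseteq\pi(V_\lambda)=V_\lambda$ and $\pi(A)\cap V_\lambda=A$), $\pi$ to be the identity on \emph{every} subset of $V_\lambda$ in its domain; in particular $\pi(x)=x$ for the generic $x$, and any parameter lying in $M_\omega[g]$ is automatically $\pi$-fixed, so the absoluteness goes through for it. Reducing the given $B\in V_{\lambda+1}$ to such a parameter --- equivalently, arranging that the model $M_\omega[g]$ to which Generic Absoluteness is applied already captures $B$ when constructing $N$ --- is where the structure theory of $L(V_{\lambda+1})$ underlying Theorem~\ref{thm:Cramer} (goodness, definability from $V_{\lambda+1}$-parameters) is essential. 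Once this interface between Generic Absoluteness and the $\lambda$-Baire-category apparatus of this section is in place, the rest of the proof is identical to that of Theorem~\ref{thm:BP for lightface}.
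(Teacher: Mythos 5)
Your first half is essentially the paper's argument: pass to an $\omega$-iterable I0-embedding, take $\vec{\mathcal{F}}\in M_\omega[\vec{\lambda}]$, force with the diagonal Prikry forcing of $M_\omega[\vec{\lambda}]$, use Lemma~\ref{lemma:GenericsComeager} to see that the generics are comeager, and compare $X_{\varphi,y}$ with the open set $U_{\varphi,y}$ via Generic Absoluteness applied to $M_\omega[\vec{\lambda},x]$. But this only yields the $\vec{\mathcal{F}}$-Baire property for parameters $y\in M_\omega[\vec{\lambda}]\cap V_{\lambda+1}$, and the step you flag as ``the technical core'' --- handling an arbitrary $B\in V_{\lambda+1}$ --- is exactly where your proposal has a genuine gap. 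Your suggested fix, namely ``arranging that the model $M_\omega[g]$ already captures $B$ when constructing $N$,'' cannot work: Theorem~\ref{thm:Cramer} requires $N[x]$ to be of the form $M_\omega[g]$ for a partial order $\mathbb{P}\in M_\omega$ and $g\in V$ generic over $M_\omega$, and an arbitrary subset of $V_\lambda$ in $V$ need not be set-generic over the $\omega$-th iterate; moreover Lemma~\ref{lemma:GenericsComeager} needs $(2^\lambda)^N<\lambda^+$, so any admissible $N$ sees fewer than $\lambda^+$ many subsets of $\lambda$, while the theorem must cover all $2^\lambda$ many parameters. So no choice of $N$ (nor a per-parameter family of such $N$'s) reduces the general case to the case of parameters captured by the generic extension.

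The paper closes this gap by an entirely different mechanism, which your proposal does not supply: a reflection argument inside $L(V_{\lambda+1})$ using a \emph{second} application of Theorem~\ref{thm:Cramer}. One first checks that ``$X_{\varphi,y}$ has the $\vec{\mathcal{F}}$-Baire property'' is expressible with a single existential quantifier bounded by $V_{\lambda+2}$ (coding the open set and the $\lambda$-sequence of nowhere dense sets by a subset of $V_{\lambda+1}$), hence is upward absolute between levels of $L(V_{\lambda+1})$. One then defines, for each $y\in V_{\lambda+1}$, the least level $\alpha_y<\Theta$ at which this statement holds, and proves $\sup_y\alpha_y<\Theta$ via definable surjections built from the canonical well-orderings of the models $\HOD^{L(V_{\lambda+1})}_x$. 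Choosing a good ordinal $\alpha$ above this supremum, Theorem~\ref{thm:Cramer} gives an elementary $\map{\pi}{L_{\bar\alpha}(M_\omega[\vec{\lambda}]\cap V_{\lambda+1})}{L_\alpha(V_{\lambda+1})}$ fixing $M_\omega[\vec{\lambda}]\cap V_{\lambda+1}$, and elementarity transfers the statement ``for all $y\in V_{\lambda+1}$, $X_{\varphi,y}$ has the $\vec{\mathcal{F}}$-Baire property'' from the small structure --- where the quantifier ranges precisely over the parameters already handled in the first half --- to $L_\alpha(V_{\lambda+1})$, and upward absoluteness then gives it for all $y\in V_{\lambda+1}$ in $V$. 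Without this boundedness-plus-reflection step (or a substitute for it), your argument proves the theorem only for parameters in $M_\omega[\vec{\lambda}]\cap V_{\lambda+1}$, not for all parameters in $V_{\lambda+1}$ as claimed.
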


\begin{proof} 
 By earlier remarks, we may assume that $j$ is $\omega$-iterable. In the following, we let $\map{j_{0,\o}}{L(V_{\lb+1})}{M_\o}$ denote the embedding into the $\o$-th iterate of $L(V_{\lb+1})$ by $j$. 
  Then $\vec{\lambda}$ is Prikry-generic over $M_\omega$ and there is a sequence $\vec{\mathcal{F}}=\seq{F_n}{n<\omega}$ in $M_\omega[\vec{\lambda}]$ such that  each $F_n$ is a normal ultrafilter on $\lambda_n$. Finally, we define $\mathbb{P}$ to be the corresponding diagonal Prikry forcing $\mathbb{P}_{\vec{\mathcal{F}}}^{M_\omega[\vec{\lambda}]}$ in $M_\omega[\vec{\lambda}]$.

Given $n<\omega$, we fix a $\Sigma^1_n$-formula $\varphi(w_0,w_1)$ in the language of set theory with free second-order variables $w_0$ and $w_1$. 
Given $y \in M_\o[\vec{\lb}] \cap V_{\lb+1}$, we define $$X_{\varphi,y} ~ = ~ \Set{x \in \Clb}{\langle V_{\lb},\in\rangle \models \varphi(x,y)}.$$
As in the proof of Theorem \ref{thm:BP for lightface}, we now define $O_{\varphi,y}$ to be the open subset of $\mathbb{P}$ in $M_\omega[\vec{\lambda}]$ that consists of all conditions $p$ with $$p \Vdash^{M_\o[\vec{\lb}]}_{\mathbb{P}} \anf{\langle V_{\check{\lambda}},\in\rangle\models\varphi(\dot{x},\check{y})},$$
where $\dot{x}$ denotes the canonical $\mathbb{P}$-name for the generic sequence in $M_\omega[\vec{j}]$. 
 In addition, we let $U_{\varphi,y}$ denote the union of all sets $N_p$ with $p\in O_{\varphi,y}$ in $V$. 
 Finally, we define $C$ to be the set of all $x$ in $\Clb$ that are $\mathbb{P}$-generic over $M_\omega[\vec{\lambda}]$. Since $(2^\lambda)^{M_\omega[\vec{\lambda}]}<\lambda^+$, an application of Lemma \ref{lemma:GenericsComeager} shows that  $C$ is \(\lb\)-comeager in the $\vec{\mathcal{F}}$-EP topology.

 Now, fix $x$ in $C$.  Still following the proof of Theorem \ref{thm:BP for lightface}, we then know that $x$ is an element of $U_{\varphi,y}$ if and only if $$\langle V_\lambda,\in\rangle\models\varphi(x,y)$$ holds in $M_\omega[\vec{\lambda},x]$. 
  The model \(M_\o[\vec{\lb},x]\) is a generic extension (via the forcing that is a two-step iteration of Prikry and diagonal Prikry forcing) of $M_\omega$ and $\cof{\lambda}^{M_\omega[\vec{\lambda},x]}=\omega$. 
   Therefore, we can apply Generic Absoluteness to \(M_\o[\vec{\lb},x]\) to show that $x\in U_{\varphi,y}$ if and only if  \(x \in X_{\varphi,y} \). 
   These computations show that $U_{\varphi,y} \, \Delta \, X_{\varphi,y}\subseteq C(\vec{\lambda})\setminus C$ and we can conclude that the set $X_{\varphi,y}$  has the $\vec{\mathcal{F}}$-Baire property.

We now know that the statement  
 \begin{equation}\label{equation:BP}
  \anf{\textit{$X_{\varphi,y}$ has the $\vec{\mathcal{F}}$-Baire property}}
  \end{equation}
   holds in $L(V_{\lb+1})$ for every $\Sigma^1_n$-formula $\varphi(w_0,w_1)$ and for all $y \in M_\o[\vec{\lb}] \cap V_{\lb+1}$. 
 We claim that this statement can be expressed by a formula that only uses a single existential quantifier bounded by the set $V_{\lambda+2}$ of all subsets of $V_{\lb+1}$. 
  Notice that, as a consequence of this, it follows that the $\vec{\mathcal{F}}$-Baire property is upward absolute. 
 By definition of $\vec{\mathcal{F}}$-Baire property, the set $X_{\varphi,y}$ has the $\vec{\mathcal{F}}$-Baire property if and only if there exist an open subset $U$ of $\Clb$ and a sequence $\seq{C_\alpha}{\alpha < \lb}$ of closed nowhere dense subsets of $\Clb$ with the property that $A \, \Delta \, U \subseteq \bigcup_{\alpha < \lb} C_\alpha$.  
 Notice now that each open set $W$ is determined by the subset $\Set{p\in\mathbb{P}_{\vec{\mathcal{F}}}}{N_p \subseteq W}$ of $V_{\lb+1}$. 
 %
Hence, the set $U$ and the sequence $\seq{C_\alpha}{\alpha < \lb}$ can be determined by a $\lb$-sequence of subsets of $V_{\lb+1}$, which in turn can be canonically identified with a subset of $V_{\lb+1}$. It is now easy to see that the claim holds.

Now, given  $y \in V_{\lb+1}$ with the property that \eqref{equation:BP} holds in $L(V_{\lambda+1})$, we define $\alpha_y$ to be the least ordinal $\alpha$ below $\Theta$ such that \eqref{equation:BP} holds in $L_\alpha(V_{\lambda+1})$. Such an ordinal exists below $\Theta$ because all the subsets of $V_{\lambda+1}$ in $L(V_{\lambda+1})$ are elements of $L_\Theta(V_{\lambda+1})$ (see, for example, {\cite[Lemma 5.6]{Di18}}). In addition, we define $\alpha_y=0$ for all $y \in V_{\lb+1}$ with the property that \eqref{equation:BP} fails in $L(V_{\lambda+1})$. 
 The resulting function $y\mapsto\alpha_y$ is then definable  in $L(V_{\lb+1})$. We now want to prove that $$\alpha ~ = ~ \sup\Set{\alpha_y}{y \in V_{\lb+1}} ~ < ~ \Theta.$$ 

For any $x \in V_{\lb+1}$, we define $<_x$ to be  the canonical well-ordering of $\HOD^{L(V_{\lambda+1})}_{x}$, the inner model of all sets hereditarily definable in $L(V_{\lambda+1})$ with ordinals and $x$ as parameters.\footnote{It is a standard argument that $L(V_{\lambda+1})=\bigcup\Set{\HOD^{L(V_{\lambda+1})}_{x}}{x\in V_{\lambda+1}}$} 
In addition, for all $x,y\in V_{\lb+1}$, we let $g_x(y)$ denote the $<_x$-smallest surjection from $V_{\lb+1}$ to $\alpha_y$, if  it  exists and  otherwise $g_x(y)=0$. 
The  map $x \mapsto g_x$ is also definable  in $L(V_{\lb+1})$. 
It is now easy to see that the function $f$ defined by  
$$
f(x,y,z)=
\begin{cases}
    g_x(y)(z), & \text{if } g_x(y)\neq 0\\
    0, & \text{otherwise}
\end{cases}
$$
is a surjection from $V_{\lb+1}^3$ to $\alpha$ and hence $\alpha<\Theta$. 

%
In particular, we know that \eqref{equation:BP} holds in $L_\alpha(V_{\lambda+1})$ for all $y \in M_\o[\vec{\lb}] \cap V_{\lb+1}$. 
By the fact that the sequence of good ordinals is cofinal in $\Theta$ and that the $\vec{\mathcal{F}}$-Baire property is upward absolute, we can assume that $\alpha$ is good. 
Then, by Theorem \ref{thm:Cramer}, there exist $\bar \alpha<\lb$ and an elementary embedding $$\map{\pi}{L_{\bar \alpha}(M_\o[\vec{\lb}] \cap V_{\lb+1})}{L_{\alpha}(V_{\lb+1})}$$ such that $\pi \rest (M_\o[\vec{\lb}] \cap V_{\lb+1})=\text{id}_{M_\o[\vec{\lb}] \cap V_{\lb+1}}$. 
Thus,   we can conclude that 
\begin{equation*}
 \begin{split}
   & \forall y\in  M_\o[\vec{\lb}] \cap V_{\lb+1} ~   L_{\alpha}(V_{\lb+1}) \models \anf{\textit{$X_{\varphi,y}$ has the $\vec{\mathcal{F}}$-Baire property}} \\
   \Longleftrightarrow ~ {} ~ &  \forall y\in  M_\o[\vec{\lb}] \cap V_{\lb+1}  ~ L_{\bar \alpha}(M_\o[\vec{\lb}] \cap V_{\lb+1}) \models \anf{\textit{$X_{\varphi,y}$ has the $\vec{\mathcal{F}}$-Baire property}} \\ 
     \Longleftrightarrow ~ {} ~&  L_{\bar \alpha}(M_\o[\vec{\lb}] \cap V_{\lb+1}) \models \forall y \in V_{\lb+1} ~  \anf{\textit{$X_{\varphi,y}$ has the $\vec{\mathcal{F}}$-Baire property}} \\ 
      \Longleftrightarrow ~ {} ~ & L_{\alpha}(V_{\lb+1}) \models \forall y \in V_{\lb+1} ~  \anf{\textit{$X_{\varphi,y}$ has the $\vec{\mathcal{F}}$-Baire property}}. 
 \end{split}
\end{equation*}

These computation show that every set of  the form $X_{\varphi,y}$ with $y\in V_{\lambda+1}$ has the $\vec{\mathcal{F}}$-Baire property. 
\end{proof}


\section{Open questions}

We close this paper by stating two questions raised by the above results. As mentioned in the introduction, our results suggest that large cardinals assumptions can be studied through the provable validity of Perfect Set Theorems for simply definable sets at singular cardinals of countable cofinality.  
 In particular, our results suggest that the existence of an I2-embedding with critical sequence $\vec{\lambda}$ naturally corresponds to the validity of a Perfect Set Theorem for subsets of $C(\vec{\lambda})$ that are definable by $\Sigma_1$-formulas with parameters in $V_\lambda\cup\{\vec{\lambda}\}$, where $\lambda$ is the supremum of the sequence $\vec{\lambda}$. 
 We therefore ask if the conclusion of Theorem \ref{thm:Main1} can also be derived from substantially weaker large cardinal assumptions: 
 
 \begin{question}
  Let $\vec{\lambda}=\seq{\lambda_n}{n<\omega}$ be a strictly increasing sequence of cardinals with supremum $\lambda$ such that $\lambda_n$ is a ${<}\lambda$-supercompact cardinal for all $n<\omega$. 
   If $X$ is a subset of $\POT{\lambda}$ of cardinality greater than $\lambda$ that is definable by a $\Sigma_1$-formula with parameters in $V_\lambda\cup\{\vec{\lambda}\}$, is there  a perfect embedding $\map{\iota}{{}^\omega\lambda}{\POT{\lambda}}$ with $\ran{\iota}\subseteq X$?
   If yes, what about subsets of $\POT{\lambda}$ that are definable by  $\Sigma_1$-formulas with parameters in $V_\lambda\cup\{V_\lambda,\vec{\lambda}\}$?
 \end{question}

  In another direction, we also ask which large cardinal assumptions are necessary to overcome the limitations to the influence of I2-embeddings given by Theorem \ref{thm:NoI2boldface}. 
  Note that, by Theorem \ref{thm:PerfectI0}, an I0-embedding suffices for this task. 
  Remember that an I1-embedding is a non-trivial elementary embedding $\map{j}{V_{\lambda+1}}{V_{\lambda+1}}$.

  \begin{question}
  Let $\map{j}{V_{\lambda+1}}{V_{\lambda+1}}$ be an I1-embedding. 
   If $X$ is a subset of $\POT{\lambda}$ of cardinality greater than $\lambda$ that is definable by a $\Sigma_1$-formula with parameters in $V_{\lambda+1}$, is there  a perfect embedding $\map{\iota}{{}^\omega\lambda}{\POT{\lambda}}$ with $\ran{\iota}\subseteq X$? 
   If yes, what about subsets of $\POT{\lambda}$ that are definable over $V_\lambda$ by $\Sigma^1_n$-formulas with parameters in $V_{\lambda+1}$?  
 \end{question}
 

\subsection*{Acknowledgements} 
The authors are thankful to the anonymous referees for the careful reading of the manuscript and several helpful comments. 


\subsection*{Funding} 
 The first and second author partially supported by the Italian PRIN 2017 Grant ``Mathematical Logic: models, sets, computability'', the first author is further partially supported by the Italian PRIN 2022 Grant ``Models, Sets and Classifications''. 
The third author  gratefully acknowledges support from the Deutsche Forschungsgemeinschaft (Project number 522490605) and the Spanish Government under grant EUR2022-134032.


\bibliographystyle{alpha}
\bibliography{biblio}
	
\end{document}